\documentclass[11pt]{article}
\usepackage{amssymb,amsmath,bm}
\usepackage{amsthm}
\usepackage{xcolor}
\usepackage{textcomp}
\usepackage{enumerate}      
\usepackage{graphicx}        
\usepackage{caption, subcaption}

\usepackage{url}

\usepackage{tabu}

\usepackage[export]{adjustbox}

\usepackage{mathrsfs}

\usepackage{url}

\usepackage{array}
\newcommand{\PreserveBackslash}[1]{\let\temp=\\#1\let\\=\temp}
\newcolumntype{C}[1]{>{\PreserveBackslash\centering}p{#1}}
\newcolumntype{R}[1]{>{\PreserveBackslash\raggedleft}p{#1}}
\newcolumntype{L}[1]{>{\PreserveBackslash\raggedright}p{#1}}

\renewcommand{\setminus}{{\smallsetminus}}

\usepackage{amssymb,amsmath,bm}
\usepackage{amsthm}
\usepackage{hyperref}
\usepackage{mathrsfs}
\usepackage{textcomp}
\usepackage{enumerate}
\usepackage{graphicx}
\usepackage{tikz, tikz-cd}
\usepackage{verbatim}

\newtheorem{theorem}{Theorem}[section]
\newtheorem{lemma}[theorem]{Lemma}
\newtheorem{proposition}[theorem]{Proposition}
\newtheorem{definition}[theorem]{Definition}
\newtheorem{corollary}[theorem]{Corollary}

\theoremstyle{remark}
\newtheorem{remark}[theorem]{Remark}
\newcommand{\R}{\mathbb{R}}

\theoremstyle{remark}

\numberwithin{equation}{section}

\begin{document}

\title{\bf $\mathrm {U}_{q\tilde q}\mathfrak{sl}(2;\mathbb R)$ Turaev-Viro invariants for cusped  $3$-manifolds}
\date{}

\author{Tianyue Liu} 
\author{Shuang Ming} 
\author{Xin Sun} 
\author{Baojun Wu}
\author{Tian Yang}
\author{Tianyue Liu, Shuang Ming, Xin Sun, Baojun Wu, and Tian Yang}

\maketitle

\begin{abstract} 
We define a family of Turaev-Viro type invariants for hyperbolic $3$-manifolds with cusp ends, extending the invariants introduced in \cite{LMSWY} for hyperbolic $3$-manifolds with totally geodesic boundary. These invariants are constructed from what we call the ideal $\mathrm{U}_{q\tilde q}\mathfrak{sl}(2;\mathbb R)$-$6j$ symbols, which are variants of the $6j$-symbols associated with the positive representations of the modular double of $\mathrm{U}_q\mathfrak{sl}(2;\mathbb R)$. We also prove that these invariants decay exponentially, with the exponential decay rate determined by the hyperbolic volume of the manifold.
\end{abstract}

\tableofcontents


\section{Introduction}

A recurring theme in quantum topology is that the asymptotic behavior of
quantum invariants encodes the geometry of $3$-manifolds. This principle
first appeared in Kashaev's volume conjecture, which relates the
exponential growth of a knot invariant defined from the quantum
dilogarithm to the hyperbolic volume of the knot complement
\cite{Kashaev}. Murakami and Murakami identified Kashaev's invariant
with a specialization of the colored Jones polynomial
\cite{MurakamiMurakami}. Analogous volume conjectures were subsequently
proposed by Chen and Yang for the Reshetikhin-Turaev and Turaev-Viro
invariants of hyperbolic $3$-manifolds \cite{ChenYang}, and were verified
in important examples in 
\cite{DetcherryKalfagianniYang, BDKY, O, WY}. These developments initiated a broad
program relating the asymptotics of quantum invariants to hyperbolic
volume and other geometric invariants of $3$-manifolds.

Recently, we initiated the study of a quantum invariant based on a natural non-compact quantum group $\mathrm {U}_{q\tilde q}\mathfrak{sl}(2;\mathbb R)$, which is the modular double of
$\mathrm {U}_{q}\mathfrak{sl}(2;\mathbb R)$.
Its $6j$-symbols are the Racah-Wigner coefficients of a continuous
series of the positive representations.  In our previous
work \cite{LMSWY}, we used these $6j$-symbols as tetrahedral weights
to construct Turaev-Viro type invariants of hyperbolic $3$-manifolds
with totally geodesic boundary.  We proved that their exponential decay
rate is the hyperbolic volume and identified the one-loop term with the
adjoint twisted Reidemeister torsion of the double of the manifold.

The purpose of the present paper is to extend our $\mathrm U_{q\tilde q}\mathfrak{sl}(2;\mathbb R)$ Turaev-Viro theory to hyperbolic $3$-manifolds with cusp
ends.  This is a key case for the program above, since cusped
manifolds -- and knot complements in particular -- form the original
geometric setting of the volume conjecture.  Such an
extension is nontrivial because the original state integral diverges in the cusp case. 
To overcome this issue, we introduce an ideal
$\mathrm U_{q\tilde q}\mathfrak{sl}(2;\mathbb R)$-$6j$ symbol as a
renormalized ideal limit of the ordinary $\mathrm {U}_{q\tilde q}\mathfrak{sl}(2;\mathbb R)$-$6j$ symbol. Using
these new tetrahedral weights, we construct a state integral for cusped
hyperbolic $3$-manifolds and prove that it decays exponentially with the decay rate
given by the hyperbolic volume.

Our construction is closely related to several non-compact state-integral
models based on Faddeev's quantum dilogarithm, quantum Teichm\"uller
theory and conformal field theory (CFT). On the mathematical side,  Andersen and Kashaev constructed the so-called Teichm\"uller TQFT, while
Kashaev, Luo, and Vartanov constructed a Turaev-Viro type partition
function on shaped triangulations \cite{AndersenKashaev,KashaevLuoVartanov}.  
On the physics side, the Virasoro TQFT proposed
by Collier, Eberhardt, and Zhang~\cite{CollierEberhardtZhang} provides a formulation of
three-dimensional quantum gravity with negative cosmological constant
in terms of Virasoro crossing kernels in CFT, which are closely related
to the $\mathrm U_{q\tilde q}\mathfrak{sl}(2;\mathbb R)$-$6j$ symbols. We believe that there exist close connections among all the non-compact theories. However, our construction itself is quite different from existing ones, and establishing a precise unifying picture is a long-term goal of our program. 

The rest of the introduction explains our construction and states our main results.

\subsection{Ideal $\mathrm {U}_{q\tilde q}\mathfrak{sl}(2;\mathbb R)$-$6j$ symbols}

For $b\in (0,1),$ let $Q=b+\frac{1}{b}.$ For $z\in \mathbb C,$ define the function 
$$T_b(z)\doteq e^{-\frac{\pi\mathbf i}{2}\big(z^2-Qz+\frac{Q^2+1}{6}\big)}$$
with the convention that $T_b(z)^{\frac{1}{2}}\doteq e^{-\frac{\pi\mathbf i}{4}\big(z^2-Qz+\frac{Q^2+1}{6}\big)}.$  For $z\in \mathbb C$ with $0<\mathrm{Re}(z)<Q,$ let $S_b$ be the double sine function defined by
\begin{equation*}
S_b(z)=\exp\Bigg(\int_\Omega\frac{\sinh\Big(\big(\frac{Q}{2}-z\big)t\Big)}{4t\sinh(\frac{bt}{2})\sinh(\frac{t}{2b})}dt\Bigg),
\end{equation*}
where the contour $\Omega$ goes along the real line and passes above the pole of the integrand at the origin.

For a six-tuple $(a_1,\dots,a_6)\in \big( \frac{Q}{2}+ \mathbf i \mathbb R\big)^6,$ let  
\begin{equation*}
\begin{split}
   & t_1=a_1+a_2+a_3,\quad  t_2=a_1+a_5+a_6,\quad t_3=a_2+a_4+a_6,\quad  t_4=a_3+a_4+a_5,\\
   & q_1=a_1+a_2+a_4+a_5,\quad q_2=a_1+a_3+a_4+a_6 \quad\text{and}\quad   
q_3=a_2+a_3+a_5+a_6.
\end{split}
\end{equation*}

\begin{definition}\label{def:ideal 6j}

The \emph{ideal $\mathrm {U}_{q\tilde q}\mathfrak{sl}(2;\mathbb R)$-$6j$ symbol} with parameter $\boldsymbol a=(a_1,\dots,a_6)$ is given by
\begin{equation}\label{b-6j}
\bigg|\begin{matrix} a_1 & a_2 & a_3 \\ a_4 & a_5 & a_6 \end{matrix} \bigg|_b=\Bigg(\frac{\prod_{i=1}^4 T_b(2Q-t_i)}{\prod_{i=1}^4\prod _{j=1}^3 T_b(q_j-t_i)}\Bigg)^{\frac{1}{2}}\int_\Gamma \frac{\prod_{i=1}^4 T_b(u-t_i)}{T_b(2Q-u)}\prod_{j=1}^3S_b(q_j-u)d\mathrm{Im}u,
\end{equation}
where $\Gamma$ is any vertical line passing the interval $\big(\frac{11Q}{6},2Q\big),$
oriented upward.
\end{definition}

This definition comes from the ``idealization" of the original $\mathrm {U}_{q\tilde q}\mathfrak{sl}(2;\mathbb R)$-$6j$ symbols. See Lemma \ref{AI}. In Propositions \ref{lem:ab-conver} and \ref{32}, we will show that the integral in (\ref{b-6j}) converges absolutely and is independent of the choice of the vertical line $\Gamma,$ and the ideal $\mathrm {U}_{q\tilde q}\mathfrak{sl}(2;\mathbb R)$-$6j$ symbols satisfy a suitable Pentagon Identity. 
\bigskip

In the rest of this paper, for $(x_1,\dots,x_6)\in\mathbb R^6$ and for each  $k\in\{1,\dots,6\},$ we will let 
$$a_k = \frac{Q}{2} + \mathbf i \frac{x_k}{2\pi b}.$$

\begin{theorem}\label{6jasymp} 
If $(x_1,\dots, x_6)\in\mathbb R^6$ are the edge lengths of a decorated ideal hyperbolic tetrahedron $\Delta,$  then as $b\to 0,$
$$\bigg|\begin{matrix} a_1 & a_2 & a_3 \\ a_4 & a_5 & a_6 \end{matrix} \bigg|_b=\frac{1}{2}\frac{e^{\frac{-\mathrm{Cov}(\Delta)}{\pi b^2}}}{\sqrt[4]{-\det\mathrm{Gram}(\Delta)}}\Big(1+O\big(b^2\big)\Big),$$
where $\mathrm{Cov}(\Delta)$ is the co-volume of $\Delta$, and  $\mathrm{Gram}(\Delta)$ is the Gram matrix of $\Delta$ in the edge lengths.
\end{theorem}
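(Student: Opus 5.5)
We prove Theorem \ref{6jasymp} by a saddle point analysis of the integral in (\ref{b-6j}) as $b\to 0$, in parallel with the proof of the corresponding asymptotics for the ordinary $6j$-symbols in \cite{LMSWY}.

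\textbf{Rescaling and semiclassical limit.} Substitute $u=\frac{Q}{2}+\ldots$, or more conveniently $u = Q + \frac{\mathbf i\, z}{2\pi b}$ with the real part of $z$ chosen so that $u$ lies on $\Gamma$; then $q_j-u$ and $u-t_i$ become $\frac{Q}{2}$-type shifts of $\frac{\mathbf i}{2\pi b}$ times linear combinations of the $x_k$ and $z$. Apply the standard asymptotics of the double sine function: for $w$ in a compact set of the strip, $S_b\big(\frac{Q}{2}+\frac{w}{2\pi b}\big)=\exp\big(\frac{1}{2\pi \mathbf i b^2}\mathrm{Li}_2(-e^{w})+\ldots\big)(1+O(b^2))$, in the form already used in \cite{LMSWY}. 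The $T_b$ factors are exact Gaussians. Together, the integrand becomes $\exp\big(\frac{1}{\pi b^2}V(z;\boldsymbol x)\big)(1+O(b^2))$, and the prefactor contributes $\exp\big(\frac{1}{\pi b^2}W(\boldsymbol x)\big)$. Here $V$ is a combination of three dilogarithms $\mathrm{Li}_2(e^{\cdot})$ and quadratic polynomials, since the four $S_b(u-t_i)$ factors of the compact case have degenerated into $T_b$'s in the ideal limit.

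\textbf{Critical point and identification with co-volume.} We first show that $\partial_z V=0$ reduces, after exponentiating, to a quadratic equation in $e^{z}$. Its relevant root is expressed through the edge lengths, with discriminant proportional to $-\det\mathrm{Gram}(\Delta)$, which is positive for a decorated ideal tetrahedron. Next we verify that at this critical point $z^*$, $W+V(z^*)$ equals $-\mathrm{Cov}(\Delta)$. To do so, we identify $\mathrm{Im}$ of the dilogarithm arguments with dihedral angles via the ideal analogue of the law of cosines (Penner's $\lambda$-length relations). Then we check that both sides have the same derivative in each $x_k$, namely $-\frac12$ times the corresponding dihedral angle, by the Schl\"afli formula for the co-volume of decorated ideal tetrahedra. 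Finally we match a degenerate or symmetric limit, for instance the regular case or a scaling of the decoration, to fix the constant.

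\textbf{Gaussian term and error control.} The second derivative $V''(z^*)$ is computed explicitly and equals, up to a phase, $\sqrt{-\det\mathrm{Gram}(\Delta)}$ divided by a product that cancels against the leading constants of the prefactor. The Gaussian integral then gives $\sqrt{\pi b^2/(-V''(z^*))}$. This integral is taken with respect to $d\,\mathrm{Im}u=\frac{dz}{2\pi b}$, so the powers of $b$ cancel, yielding $\frac12(-\det\mathrm{Gram})^{-1/4}$. The $O(b^2)$ correction follows from the next order of the $S_b$ expansion and of the Laplace method.

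\textbf{Main obstacle.} The hardest step is justifying the contour deformation. Using Proposition \ref{32}, we move $\Gamma$ to a contour through $z^*$ along which $\mathrm{Re}\,V$ attains a strict maximum at $z^*$. We must then bound the tails uniformly in $b$, using the absolute convergence estimates of Proposition \ref{lem:ab-conver} and the global exponential bounds for $S_b$ away from compact sets. I would first try to show that $\mathrm{Re}\,V$ is concave along the vertical line through $\mathrm{Re}\,z^*$, as in \cite{LMSWY}. If that fails, I would construct a piecewise-linear steepest-descent path.
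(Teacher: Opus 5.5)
Your outline has the same structure as the paper's proof. The paper rescales to $\frac{1}{\pi b}\int_\Gamma\exp\big(\frac{V_{\boldsymbol x,b}(\zeta)}{2\pi\mathbf i b^2}\big)d\zeta$ with $\zeta=\pi b(2Q-u)+\mathbf i\sum_i y_i$. It reduces the critical point equation to a quadratic in $z=e^{2\mathbf i\zeta}$, and identifies the critical value with $-2\mathbf i\,\mathrm{Cov}(\Delta)$ via the co-Schl\"afli formula plus normalization at $\boldsymbol x=0$. It then computes the Hessian explicitly and controls the tails by concavity along the vertical line through $\zeta^*$. Two steps of your sketch do not work as stated.

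\textbf{The order-one factor.} The integrand is not $\exp\big(\frac{1}{\pi b^2}V\big)(1+O(b^2))$ with $V$ independent of $b$. Because $Q=b+b^{-1}$, the exact $T_b$ Gaussians and the $S_b$ expansion both contribute terms of order $b^0$ to the exponent. This leaves a factor $g=\exp\big(\frac{\kappa_{\boldsymbol x}(\zeta)}{2\pi\mathbf i}\big)$ in the paper's normalization, and it cannot be dropped. With $u_i=e^{2y_i}$ one has $V_{\boldsymbol x}''(\zeta^*)=4\sum_{i=1}^3\frac{1}{1-z^*u_i}=\frac{16\sqrt{\det\mathrm{Gram}(\Delta)}}{z^{*2}u_1u_2u_3}$. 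The $z^*$-dependent denominator is cancelled exactly by $g(\zeta^*)^2=\frac{z^*}{\prod_i(1-z^*u_i)}=-\frac{1}{z^{*2}u_1u_2u_3}$, which is Proposition \ref{Hess}. The prefactor in front of the integral is the exponential of a quadratic polynomial in $\boldsymbol x$ and cannot produce a factor $z^{*2}$. So your claim that $V''$ cancels against ``leading constants of the prefactor'' would not close to $\frac12\big(-\det\mathrm{Gram}(\Delta)\big)^{-1/4}$.

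\textbf{The contour and the tails.} Proposition \ref{32} is the Pentagon Identity and gives no contour freedom. What you actually have is Proposition \ref{lem:ab-conver}, which allows only vertical lines with $\mathrm{Re}\zeta\in(0,\frac{\pi}{6})$. So you must prove that the critical point lies in this strip; your sketch does not address this. The discriminant of $Az^2+Bz+C=0$ is $B^2-4AC=16\det\mathrm{Gram}(\Delta)<0$, so the two roots are non-real conjugates, not real.
\begin{itemize}
\item The paper uses Weitzenb\"ock's inequality $\sqrt{4AC-B^2}\le -B/\sqrt3$ to get $\arg z^*\in(0,\frac{\pi}{6}]$, hence $\mathrm{Re}\zeta^*\in(0,\frac{\pi}{12}]$.
\item A continuity argument from $\boldsymbol x=0$ shows the branch integer in $V_{\boldsymbol x}'=2\mathbf i\log(\cdots)$ vanishes. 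So this root is a genuine critical point, and the unique one in the strip.
\end{itemize}
Then the vertical line through $\zeta^*$ is admissible, and no steepest-descent path is needed.

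For the tails, the bounds in Proposition \ref{lem:ab-conver} have $b$-dependent constants and are too crude to beat $e^{-\mathrm{Cov}(\Delta)/\pi b^2}$. What the paper uses instead is:
\begin{itemize}
\item concavity of $\mathrm{Im}V_{\boldsymbol x}$ in $\mathrm{Im}\zeta$ (Proposition \ref{concave});
\item the limiting slopes $12\,\mathrm{Re}\zeta^*-2\pi\le-\pi$ and $4\pi$ (Proposition \ref{limder});
\item $b$-uniform bounds on $\partial_{\mathrm{Im}\zeta}\mathrm{Im}\kappa_{\boldsymbol x}$ and on the remainder $\nu_{\boldsymbol x,b}$ along the whole line (Proposition \ref{bound}).
\end{itemize}
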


\begin{remark}
Using the same argument as in \cite{LMSWY, LMSWY2}, one can obtain the following results, whose proof is left to the interested readers.
\begin{enumerate}[(1)]
\item Every $(x_1,\dots, x_6)\in \mathbb R^6$ is exclusively the six-tuple of the edge lengths of:
\begin{enumerate}[(i)]
\item a decorated ideal hyperbolic tetrahedron in $\mathbb H^3,$ this is when the Gram matrix has signature $(3,1),$ 
\item a (decorated ideal) flat tetrahedron, this is when the Gram matrix has signature $(2,1),$ 
\item a decorated ideal hyperbolic tetrahedron in  $\mathbb A\mathrm d\mathbb S^3,$ this is when the Gram matrix has signature $(2,2).$   
\end{enumerate}

\item If $(x_1,\dots, x_6)\in{\mathbb R^6}$ are the edge lengths of a flat tetrahedron $\Delta,$ then as $b\to 0,$
$$\lim_{b\to 0}\pi b^2\log\bigg|\begin{matrix} a_1 & a_2 & a_3 \\ a_4 & a_5 & a_6 \end{matrix}  \bigg|_b=-\widetilde{\mathrm{Cov}}(\Delta),$$
where $\widetilde{\mathrm{Cov}}(\Delta)$ is the extended co-volume function of $(x_1,\dots,x_6)$ defined in \cite{LY}.

\item If $(x_1,\dots, x_6)\in\mathbb R^6$ are the edge lengths of a decorated ideal anti-de Sitter tetrahedron $\Delta,$ then  as $b\to 0,$
\begin{equation*}\label{AdS/CFT}
\bigg|\begin{matrix} a_1 & a_2 & a_3 \\ a_4 & a_5 & a_6 \end{matrix}  \bigg|_b=\frac{e^{\frac{-\widetilde{\mathrm{Cov}}(\Delta)}{\pi b^2}} }{\sqrt[4]{\det\mathrm{Gram}(\Delta)}} \Bigg(\cos\bigg( {\frac{\mathrm{Cov}_{\mathbb A\mathrm d\mathbb S^3}(\Delta)}{\pi b^2}}-\frac{\pi}{4}\bigg) +O\big(b^2\big)\Bigg),
\end{equation*}
where $\widetilde{\mathrm{Cov}}(\Delta)$ is the extended co-volume function defined in \cite{LY}, $\mathrm{Cov}_{\mathbb A\mathrm d\mathbb S^3}(\Delta)$ is the anti-de Sitter co-volume of $\Delta$, and $\mathrm{Gram}(\Delta)$ is the Gram matrix of $\Delta$ in the edge lengths.
\end{enumerate}
\end{remark}

\subsection{$\mathrm {U}_{q\tilde q}\mathfrak{sl}(2;\mathbb R)$ Turaev-Viro invariants for  hyperbolic  $3$-manifolds with cusp ends}
We now define the $\mathrm {U}_{q\tilde q}\mathfrak{sl}(2;\mathbb R)$ Turaev-Viro type invariant for hyperbolic $3$-manifolds with cusp ends.  As mentioned above, the construction of \cite{LMSWY}, based on the $\mathrm{U}_{q\tilde q}\mathfrak{sl}(2;\mathbb R)$-$6j$ symbols, does not extend directly to the cusped setting, as the corresponding state integral diverges. To overcome this difficulty, we instead use the ideal $\mathrm{U}{q\tilde q}\mathfrak{sl}(2;\mathbb R)$-$6j$ symbols introduced above.

Let  $M$ be a  hyperbolic $3$-manifold with cusp ends, and let $\mathcal T$ be an ideal triangulation of $M$ with the set of edges $E$ and the set of tetrahedra $T.$ A $b$-coloring of $(M,\mathcal T)$ is an assignment of a complex number $a_e$ of the form $\frac{Q}{2}+\mathbf i\frac{x_e}{2\pi b}$ with $x_e\in\mathbb R$ to each  $e$ of $\mathcal T.$ Let $\boldsymbol a=\big(a_e\big)_{e\in E}.$ For each $e\in E,$ define
\begin{equation}
  |e|_{\boldsymbol a}=|T_b(2a_e)|^2=e^{\frac{Qx_e}{b}};
\end{equation}
and for each $\Delta\in T,$ define
$$|\Delta|_{\boldsymbol a}=\bigg|\begin{matrix} a_{e_1} & a_{e_2} & a_{e_3} \\ a_{e_4} & a_{e_5} & a_{e_6} \end{matrix} \bigg|_b=\bigg|\begin{matrix} \frac{Q}{2}+\mathbf i\frac{x_{e_1}}{2\pi b} & \frac{Q}{2}+\mathbf i\frac{x_{e_2}}{2\pi b} & \frac{Q}{2}+\mathbf i\frac{x_{e_3}}{2\pi b} \\ \frac{Q}{2}+\mathbf i\frac{x_{e_4}}{2\pi b} & \frac{Q}{2}+\mathbf i\frac{x_{e_5}}{2\pi b} & \frac{Q}{2}+\mathbf i\frac{x_{e_6}}{2\pi b} \end{matrix} \bigg|_b,$$
where $\{e_1,\dots,e_6\}$ are the edges of $\mathcal T$ adjacent to $\Delta$ so that $e_1,e_2,e_3$ are the edges of a face of $\Delta$ and, for $i\in\{1,2,3\},$  $e_i$ and $e_{i+3}$ are opposite to each other.

Let $V$ be the set of boundary components of $M,$ and let the $|E|\times |V|$-matrix $A$ (also considered as a linear transformation from $\mathbb R^V$ to $\mathbb R^E$) be the adjacency matrix of the edges and the boundary components given by $A_{e,v}=|e\cap v|,$ the number of points of intersections of the edge $e$ with the boundary component $v.$ The matrix $A$ defines an action of $\mathbb R^V$ on $\mathbb R^E$ by 
$\boldsymbol u\cdot\boldsymbol x=\boldsymbol x + A\boldsymbol u,$ geometrically corresponding to the action of the change of horospheres. This in turn defines an $\mathbb R^V$-action on $\big(\frac{Q}{2}+\mathbf i \mathbb R\big)^E$ by $\boldsymbol u\cdot \boldsymbol a = \boldsymbol a + \frac{\mathbf i}{2\pi b} A \boldsymbol u;$ and we denote the quotient space by  $\big(\frac{Q}{2}+\mathbf i \mathbb R\big)^E/\mathbb R^V.$ Let $d
\mu$ be the quotient measure obtained by disintegrating the Lebesgue measure $\prod_{e\in E} d\mathrm{Im}a_e$ on $\big(\frac{Q}{2}+\mathbf i\mathbb R\big)^ E$ along the orbits $[\boldsymbol a]= \boldsymbol a + \frac{\mathbf i}{2\pi b} A(\mathbb R^V)$ of the action.

We consider the following integral 
\begin{equation}\label{b-TV}
    \mathrm{TV}_b(M,\mathcal T)\doteq \int_{\big(\frac{Q}{2}+\mathbf i\mathbb R\big)^ E/\mathbb R^V} \prod_{e\in E} |e|_{\boldsymbol{a}}\prod_{\Delta\in T}|\Delta|_{\boldsymbol a}d\mu([\boldsymbol a]).
\end{equation}

Theorem \ref{converge} below considers the convergence of the integral (\ref{b-TV}), under a natural condition that the ideal triangulation $\mathcal T$ supports angle structures (see Section \ref{AS} for the definition).  It was proved in \cite{HRS} that hyperbolic $3$-manifolds satisfying a mild topological condition - including all link complements in $\mathbb S^3$ - admit ideal triangulations supporting angle structures. Moreover, \cite{HRST} proved that Agol's veering triangulations admit angle structures. It has also been verified that every hyperbolic $3$-manifold with cusp ends in the SnapPy census admits an ideal triangulation supporting an angle structure. Therefore, Theorem \ref{converge} applies to all of these examples. 

\begin{theorem}\label{converge}
Let $M$ be a hyperbolic $3$-manifold with cusp ends, and let $\mathcal T$ be an ideal triangulation of $M$ that supports angle structures. Then for each $b\in (0,1),$ the integral $\mathrm{TV}_b(M,\mathcal T)$ converges absolutely.
\end{theorem}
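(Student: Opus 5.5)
We bound the integrand by an explicit function of the real coordinates $\boldsymbol x=(x_e)_{e\in E}$, show that this function decays exponentially in every direction transverse to the orbits of $\mathbb R^V$, and use an angle structure to produce the decay. The integrand is invariant under the $\mathbb R^V$-action, so it descends to the quotient; this should follow from Definition \ref{def:ideal 6j} by shifting the contour $\Gamma$, using the independence of $\Gamma$ from Proposition \ref{lem:ab-conver}. Hence it suffices to bound it on a linear complement $W$ of $A(\mathbb R^V)$ in $\mathbb R^E$.

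\textbf{Step 1: a pointwise bound for each ideal $6j$ symbol.} For fixed $b$, we estimate $|\Delta|_{\boldsymbol a}$ using the standard asymptotics of $S_b$. For large $|\mathrm{Im} z|$ with $\mathrm{Re} z$ fixed, $|S_b(z)|\sim e^{\mp\frac{\pi}{2}\mathrm{Re}(z-\frac Q2)\cdot 2|\mathrm{Im} z|}$ up to a constant. The factors $T_b$ are Gaussians in $z$. For $z=c+\mathbf i y$ one has $|T_b(z)|=e^{\pi(c-\frac Q2)y}$, a pure exponential in the imaginary parts.

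With $u=c+\mathbf i s$ on $\Gamma$ and $c\in(\frac{11Q}{6},2Q)$, the integrand in (\ref{b-6j}) then has modulus bounded by $C\exp(L_c(s,\boldsymbol x))$. Here $L_c$ is piecewise linear in $(s,\boldsymbol x)$ and built from $|s-\mathrm{Im}\, q_j|$, $s-\mathrm{Im}\, t_i$ and so on. Integrating in $s$, the integral is dominated by the maximum of the piecewise linear exponent over the breakpoints. This gives
$$|\Delta|_{\boldsymbol a}\le C_b\,\exp\Big(\frac{1}{b}\,\Phi_\Delta(\boldsymbol x)\Big)$$
with $\Phi_\Delta$ convex and piecewise linear, homogeneous of degree one. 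We would compare $\Phi_\Delta$ with the $b\to0$ leading term, which is the piecewise-linear asymptotic cone of $-\mathrm{Cov}$. Concretely, we expect $\Phi_\Delta(\boldsymbol x)+\frac{Q b}{2}\sum_{e\subset\Delta}(\text{edge weight})\,x_e$ to be governed by $-\sum_{i}\theta_i$-type expressions.

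\textbf{Step 2: using the angle structure.} The key is a tetrahedral inequality. For any angle structure $\alpha$ (dihedral angles $\alpha_{\Delta,e}>0$, summing to $\pi$ at each vertex of $\Delta$), we want
$$\Phi_\Delta(\boldsymbol x)\le -\sum_{e\subset\Delta}\frac{\alpha_{\Delta,e}}{\pi}\,\cdot\frac{x_e}{2}\cdot(\text{const})-\epsilon\,\|\boldsymbol x_\Delta\|_{\Delta}$$
modulo the $\mathbb R^4$ horosphere action on the tetrahedron. This is analogous to the inequality $\mathrm{Cov}(\Delta)\ge\sum \alpha_e x_e/2 + \text{const}$ coming from Legendre duality of the co-volume: $\mathrm{Cov}$ is the Legendre transform of the volume in the angle variables.

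Summing over $\Delta$ and using $\sum_{\Delta\ni e}\alpha_{\Delta,e}=2\pi$ at each edge, the linear terms should cancel exactly against $\prod_e|e|_{\boldsymbol a}=e^{\frac{Q}{b}\sum x_e}$. What remains is $\prod|e|\prod|\Delta|\le C\exp(-\frac{\epsilon}{b}\sum_\Delta\|\boldsymbol x_\Delta\|)$. The seminorms $\|\cdot\|_\Delta$ vanish exactly on the tetrahedral horosphere directions. Their sum therefore vanishes exactly on $A(\mathbb R^V)$, since a vector killed by every tetrahedron's seminorm is a consistent horosphere change at every cusp. So the sum is a norm on $W$, and absolute convergence follows.

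\textbf{Main obstacle.} The hard step is Step 2: establishing the strict inequality for $\Phi_\Delta$ with the exact linear term. One must compute the piecewise-linear maximum from the $S_b$ asymptotics explicitly. I would first reduce by the horosphere symmetry to a normalized tetrahedron, then verify the inequality on each linear piece, which amounts to finitely many linear inequalities in the angles. Strictness comes from $\alpha_{\Delta,e}>0$.

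A secondary concern is uniformity of the constant $C_b$ in Step 1 near the breakpoints, where the asymptotics of $S_b$ are not sharp. This should be handled by the uniform bound $|S_b(c+\mathbf i y)|\le C e^{-\pi(\frac Q2-c)|y|}$ valid for all $y$ with $c$ fixed.
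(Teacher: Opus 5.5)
Your architecture matches the paper's. First, a piecewise-linear exponential bound for each ideal $6j$ symbol (Lemma~\ref{bib}(2)). Your direct estimate of the integrand of (\ref{b-6j}) is a fine substitute for the paper's detour through Lemma~\ref{AI}, and gives the same exponent, since the prefactor has modulus $1$. Second, a per-tetrahedron angle inequality (Corollary~\ref{thetabj}) whose linear part cancels $\prod_e|e|_{\boldsymbol a}$ via $\sum_{\Delta\sim e}\theta_{(\Delta,e)}=2\pi$. Third, injectivity of $B|_X$ (Lemma~\ref{inj}).

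The gap is Step 2, which carries the real content and which you leave as ``we want''. Moreover, the route you sketch for it fails as set up. On the line $\mathrm{Re}\,u=c$, the exponent you get after integrating out $\mathrm{Im}\,u$ is exactly
\[
-\frac{\pi Q}{3}\sum_{k=1}^6\mathrm{Im}\,a_k+\pi(11Q-6c)\max\{b_1,b_2,b_3\}.
\]
This is concave, not convex, since $11Q-6c<0$. Because $\boldsymbol\theta\cdot\mathrm{Im}\,\boldsymbol a=\frac{\pi}{3}\sum_k\mathrm{Im}\,a_k+\sum_j\theta_jb_j$, your desired inequality is equivalent to
\[
Q\sum_j\theta_jb_j+\epsilon\max_jb_j\leqslant\pi(6c-11Q)\max_jb_j \quad\text{on } \{b_1+b_2+b_3=0\}.
\]
The sharp constant is $\sup\big(\sum_j\theta_jb_j\big)/\max_jb_j=\pi-3\theta_{\min}$, attained e.g.\ at $(b_1,b_2,b_3)=(1,1,-2)$ when $\theta_3=\theta_{\min}$. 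So the inequality holds for some $\epsilon>0$ if and only if $c>2Q-\frac{\theta_{\min}}{2\pi}Q$. With $c$ fixed in Step 1 independently of the angles, your ``finitely many linear inequalities'' are false for every angle structure with $\theta_{\min}\leqslant 2\pi(2Q-c)/Q$. Since the exponent in (\ref{Sbbound}) is sharp, $\int_{\Gamma_c}|F|$ really is of size $e^{\max f}$, so no better absolute-value estimate on that same line can repair this.

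The missing idea is to use contour independence (Proposition~\ref{lem:ab-conver}) and choose the line tetrahedron by tetrahedron, with $c_\Delta\in\big(2Q-\frac{\theta_{\min,\Delta}}{2\pi}Q,\,2Q\big)$. Combine this with the elementary estimate
\[
\sum_j\theta_jb_j=\sum_j(\theta_j-\theta_{\min})b_j\leqslant(\pi-3\theta_{\min})\max_jb_j,
\]
which is valid because $b_1+b_2+b_3=0$ forces $\max_jb_j\geqslant0$. This is exactly where $\theta_{\min}>0$ enters: it makes the window for $c_\Delta$ nonempty. The paper takes $c=\big(2-\frac{\theta_{\min}-\delta}{2\pi}\big)Q$. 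Your Legendre-duality heuristic is consistent with this only in the limit $c\to2Q$, where $\pi(11Q-6c)\max_jb_j$ tends to $-\pi Q\max_jb_j=-Q\max_{\boldsymbol\alpha}\sum_j\alpha_jb_j$.

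Two smaller points:
\begin{enumerate}
\item The $\mathbb R^V$-invariance of the integrand does not come from moving $\Gamma$. It comes from the translation $u\mapsto u+2\mathbf is$ along the same line, which gives (\ref{coh}), together with the fact that each cusp triangulation has half as many vertices as triangles, so the edge weights compensate exactly.
\item Your one-line claim that the seminorms vanish simultaneously only on $A(\mathbb R^V)$ needs the argument of Lemma~\ref{inj}: each face determines the vertex shifts $\frac{x_i+x_j-x_k}{2}$, and these agree around each connected cusp link.
\end{enumerate}
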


The Pentagon Identity satisfied by the ideal
$\mathrm{U}_{q\tilde q}\mathfrak{sl}(2;\mathbb R)$-$6j$ symbols (Proposition \ref{pentagon}) implies the
invariance of $\mathrm{TV}_b(M,\mathcal T)$ under a $2$-$3$ or $3$-$2$
Pachner move between ideal triangulations that support angle structures,
as shown in Theorem~\ref{topinv} below. We believe that, for any
ideal triangulation $\mathcal T$ supporting angle structures, the value
of $\mathrm{TV}_b(M,\mathcal T)$ is independent of $\mathcal T$, and
hence defines a topological invariant of $M$.

\begin{theorem}\label{topinv}
Let $M$ be a hyperbolic $3$-manifold with cusp ends, and let 
$\mathcal T_1$ and $\mathcal T_2$ be two ideal triangulations of $M$ supporting angle structures that differ by a $2$-$3$ or $3$-$2$ Pachner Move.  Then 
$$\mathrm{TV}_b(M,\mathcal T_1)=\mathrm{TV}_b(M,\mathcal T_2).$$
\end{theorem}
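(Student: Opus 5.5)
We will deduce Theorem \ref{topinv} from the Pentagon Identity for the ideal $6j$ symbols (Proposition \ref{pentagon}), together with Fubini's theorem, which is justified by the absolute convergence in Theorem \ref{converge}. By symmetry it suffices to treat the case where $\mathcal T_1$ is turned into $\mathcal T_2$ by a $2$-$3$ move. In that move two tetrahedra $\Delta_1,\Delta_2$ sharing a face, with vertices $1,\dots,5$ and common face $\{1,2,3\}$, are replaced by three tetrahedra $\Delta'_1,\Delta'_2,\Delta'_3$. These three tetrahedra share one new interior edge $e_0=\{4,5\}$. Thus $E_2=E_1\sqcup\{e_0\}$, and apart from this change the two triangulations have the same edges and tetrahedra. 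The cusp set $V$ is unchanged. The new row of the adjacency matrix $A$ has entries $A_{e_0,v}=|e_0\cap v|$, determined by the cusps at the two ends of $e_0$.

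The first step is to write the integrand of $\mathrm{TV}_b(M,\mathcal T_2)$ as an iterated integral. One integrates first over $\mathrm{Im}\,a_{e_0}$ with all other colors fixed, and then over the quotient $\big(\frac{Q}{2}+\mathbf i\mathbb R\big)^{E_1}/\mathbb R^V$. For this we need the quotient measure $d\mu_2$ on $\big(\frac Q2+\mathbf i\mathbb R\big)^{E_2}/\mathbb R^V$ to disintegrate as $d\mu_1\times d\,\mathrm{Im}\,a_{e_0}$. This holds because the projection forgetting $a_{e_0}$ is equivariant for the $\mathbb R^V$-actions and is a trivial $\mathbb R$-bundle on each orbit. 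Concretely, the Lebesgue measure on $\mathbb R^{E_2}$ is the product of the Lebesgue measure on $\mathbb R^{E_1}$ and $d x_{e_0}$, and the orbit directions $A_2\mathbb R^V$ project isomorphically onto $A_1\mathbb R^V$. This injectivity should follow from the existence of angle structures or from the construction of $A$; otherwise we fix the normalization of the quotient measures consistently. Absolute convergence (Theorem \ref{converge} applied to $\mathcal T_2$) then allows Fubini, so
$$\mathrm{TV}_b(M,\mathcal T_2)=\int \prod_{e\in E_1}|e|_{\boldsymbol a}\prod_{\Delta\in T_1\setminus\{\Delta_1,\Delta_2\}}|\Delta|_{\boldsymbol a}\Big(\int_{\mathbb R}|e_0|_{\boldsymbol a}\,|\Delta'_1|_{\boldsymbol a}|\Delta'_2|_{\boldsymbol a}|\Delta'_3|_{\boldsymbol a}\,d\,\mathrm{Im}\,a_{e_0}\Big)d\mu_1.$$

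The second step is to apply Proposition \ref{pentagon} to the inner integral. It should state exactly that
$$\int |T_b(2a_{e_0})|^2\prod_{i=1}^3|\Delta'_i|\,d\,\mathrm{Im}\,a_{e_0}=|\Delta_1||\Delta_2|$$
for the nine boundary colors. Substituting this identity turns the integrand into that of $\mathrm{TV}_b(M,\mathcal T_1)$, which proves the equality. A subtlety here is that $A$ changes, so the orbits change. We must check that the pentagon identity is compatible with the $\mathbb R^V$-action, meaning both sides transform in the same way under $\boldsymbol x\mapsto\boldsymbol x+A\boldsymbol u$. We expect each ideal $6j$ symbol to be invariant, up to the edge weights, under the horosphere change at a vertex. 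Only then is the inner identity well defined on the quotient.

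We expect the main obstacle to be the measure-theoretic bookkeeping: matching the quotient measures $d\mu_1$ and $d\mu_2$ under the change of $A$, including the Jacobian normalization. If the pentagon identity holds only pointwise for a representative, we would additionally verify that the inner integral in the display above is independent of the representative. This independence holds because the identity is valid for every nine-tuple of boundary colors.
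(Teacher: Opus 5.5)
Your proposal is correct and is essentially the paper's proof: Fubini's theorem, licensed by the absolute convergence in Theorem~\ref{converge} for $\mathcal T_2$, combined with Proposition~\ref{32} applied to the integral over the color of the new edge. The paper does the measure bookkeeping in coordinates via Lemma~\ref{P}, using the transversal $U=\iota_1(X)\oplus\iota_2\big(\frac{Q}{2}+\mathbf i\mathbb R\big)$ and $\big|\det[A_2,P]\big|=\sqrt{\det(A_1^TA_1)}$, which is exactly your disintegration $d\mu_2=d\mu_1\otimes d\,\mathrm{Im}\,a_{e_0}$. The caveats you raise are harmless: $A_1$ is injective, since the three edges of any face force $u_v=0$ at that face's cusps and every cusp lies on some face; and independence of the representative follows from the $\mathbb R^V$-invariance of the integrand, which the definition of $\mathrm{TV}_b$ already requires.
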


Theorem \ref{vol} below considers the asymptotics of $\mathrm{TV}_b(M,\mathcal T),$ under the assumption that the ideal triangulation $\mathcal T$ is geometric, that is, each tetrahedron in $\mathcal T$ is a positively oriented hyperbolic ideal tetrahedron. 
It was conjectured by Thurston that every hyperbolic $3$-manifold with cusp ends has a geometric triangulation. Although still open, this conjecture has been verified for all  examples in the SnapPy census. Consequently, Theorem \ref{vol} applies to all these $3$-manifolds.

\begin{theorem}\label{vol}
Let $M$ be a hyperbolic $3$-manifold with cusp ends, and let $\mathcal T$ be a geometric ideal triangulation of $M.$ Then as $b\to 0,$
$$\lim_{b\to 0}\pi b^2\log \mathrm{TV}_b(M,\mathcal T)=-\mathrm{Vol}(M),$$
where $\mathrm{Vol}(M)$ is the hyperbolic volume of $M.$
\end{theorem}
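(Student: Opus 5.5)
The plan is to run a saddle-point (Laplace) analysis of the state integral (\ref{b-TV}) as $b\to 0$, with Theorem \ref{6jasymp} supplying the local integrand and Theorem \ref{converge} supplying control away from the critical point.

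\textbf{Change of variables and leading exponent.} Write $a_e=\frac{Q}{2}+\mathbf i\frac{x_e}{2\pi b}$, so that $d\mathrm{Im}a_e=\frac{dx_e}{2\pi b}$. The integration domain becomes $\mathbb R^E/A(\mathbb R^V)$, with a quotient measure scaled by a power of $b$. The edge weight is $|e|_{\boldsymbol a}=e^{Qx_e/b}=e^{x_e/b^2}\,e^{x_e}$, which contributes $\frac{1}{\pi b^2}\cdot \pi x_e$ at leading order. Whenever $\boldsymbol x$ is the edge-length vector of a decorated hyperbolic structure on every tetrahedron, Theorem \ref{6jasymp} gives
$$\prod_{e}|e|_{\boldsymbol a}\prod_\Delta|\Delta|_{\boldsymbol a}\approx \exp\Big(-\frac{1}{\pi b^2}\Big(\sum_\Delta \mathrm{Cov}(\Delta)-\pi\sum_e x_e\Big)\Big)$$
times polynomial factors. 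I therefore introduce $F(\boldsymbol x)=\sum_\Delta\mathrm{Cov}(\Delta_{\boldsymbol x})-\pi\sum_e x_e$, extended to all of $\mathbb R^E$ using the extended co-volume $\widetilde{\mathrm{Cov}}$ of \cite{LY} for non-hyperbolic tetrahedra. The goal is to show $\lim \pi b^2\log\mathrm{TV}_b=-\min F=-\mathrm{Vol}(M)$.

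\textbf{Properties of $F$.} $F$ should be invariant under the $\mathbb R^V$-action. Changing horospheres changes each $\mathrm{Cov}(\Delta)$ by $\sum(\text{dihedral angles at the vertex})\times u_v=\pi u_v$ per vertex. This exactly cancels the change in $\pi\sum x_e$, so $F$ descends to the quotient. By the Schläfli-type formula $\partial\mathrm{Cov}/\partial x_e=$ dihedral angle, we get $\partial F/\partial x_e=\sum(\text{angles around }e)-2\pi$, up to the paper's normalization of $\pi$ versus $2\pi$. Critical points are therefore exactly the solutions of the edge equations with decorated ideal tetrahedra, i.e. the points realizing the complete hyperbolic structure. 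For the geometric triangulation, the lengths $\boldsymbol x^*$ coming from horospheres on the complete structure give such a point. Concavity of co-volume in edge lengths (as in \cite{LY}) implies that $F$ is strictly convex on the quotient, so $\boldsymbol x^*$ is the unique global minimum. By the Euler homogeneity of $\mathrm{Cov}$ combined with the angle sums, $F(\boldsymbol x^*)=\sum\mathrm{Vol}(\Delta)=\mathrm{Vol}(M)$, since co-volume minus the horospherical-truncation term gives volume.

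\textbf{Analytic estimates.} Near $\boldsymbol x^*$, Theorem \ref{6jasymp} holds uniformly on compact sets of hyperbolic lengths. This gives the lower bound $\liminf\pi b^2\log\mathrm{TV}_b\ge -\mathrm{Vol}(M)$, because the integral over a small neighbourhood of $[\boldsymbol x^*]$ is $e^{-\mathrm{Vol}/\pi b^2}$ times powers of $b$, and the integrand is positive there. For the upper bound I need a uniform estimate $|\Delta|_{\boldsymbol a}\le C\,b^{-N}e^{-\widetilde{\mathrm{Cov}}(\Delta)/\pi b^2+\epsilon/b^2}$ valid for all $\boldsymbol x$. I also need a tail bound showing that the integrand decays like $e^{-c|\boldsymbol x|/b^2}$ on the quotient far from the origin.

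The tail bound is where the angle structure enters, just as in the proof of Theorem \ref{converge}. For an angle structure $\alpha$, one can bound the integrand by $\exp(-\frac{1}{b^2}\langle\text{linear form in }\alpha\rangle)$. This linear form is coercive on $\mathbb R^E/A\mathbb R^V$ because the angle sums equal $2\pi$ at edges and $\pi$ at vertices. I would redo that estimate with its $b$-dependence tracked, bounding the $u$-integral in Definition \ref{def:ideal 6j} by its maximum via the standard asymptotics of $S_b$ on vertical lines. The non-hyperbolic regions (flat and AdS tetrahedra) must also be shown to satisfy $F>\mathrm{Vol}(M)$ with a uniform upper bound. I expect this uniform upper bound on the ideal $6j$ symbol away from the hyperbolic region to be the main obstacle. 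My first attempt would be a direct Laplace-type bound on the contour integral, shifting $\Gamma$ to pass through the critical point of the phase. That yields $\widetilde{\mathrm{Cov}}$ as an upper bound, with the correction factor $|\cos|\le1$ in the AdS case. Dominated convergence then finishes the proof.
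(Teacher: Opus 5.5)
\textbf{There is a genuine gap, and it is the step you flag yourself.} Several pieces of your setup are correct. $F(\boldsymbol x)=\sum_\Delta\mathrm{Cov}(\Delta)-\pi\sum_e x_e$ is $A(\mathbb R^V)$-invariant. Its critical points in the hyperbolic region are given by the edge equations. The identity $F(\boldsymbol x^*)=\mathrm{Vol}(M)$ follows from the definition of $\mathrm{Cov}$ and the angle sums, exactly as in Proposition \ref{cp}. One minor correction: what makes $F$ convex is \emph{convexity} of $\mathrm{Cov}$ in the edge lengths (the Jacobian $\partial\theta/\partial\boldsymbol x$ is positive semidefinite), and the extended $F$ need not be strictly convex off the hyperbolic region.

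Once you integrate out $\zeta$ through Theorem \ref{6jasymp}, you need a bound $|\Delta|_{\boldsymbol a}\lesssim e^{(-\widetilde{\mathrm{Cov}}(\Delta)+\epsilon)/\pi b^2}$. It must hold uniformly over a compact set of $\boldsymbol x$ containing flat and anti-de Sitter tetrahedra. Nothing supplies this: the paper's flat/AdS statements are a remark, pointwise and unproved.

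Your proposed contour shift is not routine there:
\begin{itemize}
\item For AdS lengths, $B^2-4AC=16\det\mathrm{Gram}>0$, so both roots of $Az^2+Bz+C=0$ are positive reals. The critical points of $V_{\boldsymbol x}$ therefore lie on $\mathrm{Re}\zeta=0$, the boundary of the admissible strip. That line carries the poles $\zeta=\mathbf i y_i$ of the integrand and the branch points of $L$.
\item On the flat locus the two saddles coalesce, which is exactly where uniformity in $\boldsymbol x$ is needed.
\end{itemize}

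Three further pieces are missing:
\begin{itemize}
\item You would have to import convexity of the extended co-volume to get $F>\mathrm{Vol}(M)$ off $[\boldsymbol x^*]$.
\item For the tails you must track how $C_{b,\boldsymbol\theta}$ in Corollary \ref{thetabj} depends on $b$. There is no coercive linear form: by (\ref{anglesum}) the linear terms cancel the edge weights exactly. The decay comes only from the term $\epsilon\max_j b_{\Delta,j}$ together with Lemma \ref{inj}.
\item Positivity of the integrand near $[\boldsymbol x^*]$ gives no lower bound on the oscillatory integral $\mathrm{TV}_b$ until the complement is shown to be exponentially smaller.
\end{itemize}

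\textbf{The missing idea, used in the paper, is not to eliminate $\zeta$ at all.} Keep $(\boldsymbol x,\boldsymbol\zeta)$ as joint variables and integrate over the fixed product contour $\Gamma^*=X\times\prod_\Delta\Gamma^*_\Delta$. Here the real part of each $\zeta_\Delta$ is frozen at $\mathrm{Re}\zeta^*_\Delta\in(0,\pi/12]$ for \emph{every} $\boldsymbol x$, hyperbolic or not.

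On $\Gamma^*$, $\mathrm{Im}\mathcal V$ is a linear function plus $\sum_\Delta\sum_i\mathrm{Im}L(\xi_\Delta-\mathbf i s_{\Delta,i})$ precomposed with the map $S$, which is injective by Lemma \ref{inj}. Each tetrahedral block has Hessian $K^TDK$ with $D$ negative diagonal. Hence $\mathrm{Im}\mathcal V$ is globally strictly concave on $\Gamma^*$ (Proposition \ref{cd}). That single fact gives:
\begin{itemize}
\item the unique maximum at $\boldsymbol z^*$ with value $-2\mathrm{Vol}(M)$;
\item non-degeneracy (Proposition \ref{nond});
\item together with Proposition \ref{bound2}, $b$-uniform decay of the tails.
\end{itemize}
A multivariable saddle-point argument then finishes the proof. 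It needs no information about the $6j$ symbols at non-hyperbolic $\boldsymbol x$, and in fact Theorem \ref{6jasymp} is never used.

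Your reduced-function route could probably be completed, and it has the appeal of exhibiting the exponent as the geometric function $F$. It would only work, however, after proving the uniform flat/AdS bounds that this joint concavity lets the paper bypass.
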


\subsection{Forthcoming works}

\noindent{\bf Asymptotic expansion and adjoint twisted Reidemeister torsion.}
\bigskip
In a forthcoming work, we will show that 
$$\mathrm{TV}_{b}(M, \mathcal T)=\frac{1}{(2\pi b)^{|V|}}\frac{e^{\frac{-\mathrm{Vol}(M)}{\pi b^2}}}{\sqrt{|\mathbb T(M)|}}\big(1+O(b^{2})\big),$$
and
$$\mathbb T(M)=\mathrm{Im}\left(\det\left(\frac{\partial H(l_{i})}{\partial H(m_{j})}\right)_{1\leqslant i, j\leqslant  |V|}\right)\big|\mathrm{Tor}(M;\boldsymbol m)\big|^{2},$$ 
where for each $i\in \{1,\dots, |V|\},$ $m_{i}$ and $l_{i}$ are two simple closed curves on the $i$-th boundary component of $M$ that algebraically intersect once, $H(m_{i})$ and $H(l_{i})$ are respectively the logarithmic holonomies of $m_{i}$ and $l_{i}$ considered as functions over the deformation space of $(M, \mathcal{T}),$ and $\mathrm{Tor}(M; \boldsymbol m)$ is up to sign the adjoint twisted Reidemeister torsion of $M$ with $\boldsymbol m=(m_1,\dots,m_{|V|})$ the chosen system of  boundary curves. It can be verified that $\mathbb T(M)$ is independent of choice of $m_{i}$ and $l_{i}.$

\bigskip

\noindent{\bf Relationship with Andersen-Kashaev invariants.} In a forthcoming work, we will  for every ideally triangulated hyperbolic 3-manifold $ (M,\mathcal T)$  with cusp ends  define a family of Jones functions $\mathrm{J}_{b}(M,\mathcal T; \boldsymbol x),$ indexed by $\boldsymbol x\in\mathbb R^V,$ where $V$ is the set of boundary components of $M;$  and we will prove that 
\begin{equation}\label{eq:main-H}
 \mathrm{TV}_b(M,\mathcal{T})
 =
 \int_{\mathbb R^V}
 \mathrm{J}_{b}(M,\mathcal T; \boldsymbol x)
 \overline{\mathrm{J}_{b}(M,\mathcal T; -\boldsymbol x)}
 d\boldsymbol x
\end{equation}
when $\mathcal T$ supports angle structures. 
We will also show that our Jones functions are  essentially the Jones functions  predicted to show up in the Andersen-Kashaev partition function\,\cite{AndersenKashaev, BW}. Relation (\ref{eq:main-H}) can be considered as an analogue of the relationship between the original Turaev-Viro invariants and the colored Jones polynomials~\cite{DetcherryKalfagianniYang, BDKY}.

\bigskip

\noindent\textbf{Acknowledgments.} We thank Ka Ho Wong for helpful discussions. T.L.\ is supported by National Key R\&D Program of China (No.\ 2023YFA1010700) and by Simons Foundation  (SFI-MPS-PP-00012621-16).
S.M.\ is supported by National Natural Science Foundation of China (No.12371124).
X.S. and B.W. are supported by National Key R\&D Program of China (No.\ 2023YFA1010700) and National Natural Science Foundation of China Grant (No. 12526204).
 T.Y.\ is supported by NSF Grant  DMS-2505908.


\section{Preliminaries}

\subsection{Decorated ideal hyperbolic tetrahedra} \label{idealtetra}

Let $\mathbb H^3$ be the $3$-dimensional hyperbolic space with  $\partial \mathbb H^3$  its boundary. An \emph{ideal hyperbolic tetrahedron} is the convex hull of four points $\{v_1,v_2,v_3,v_4\}$ on $\partial \mathbb H^3$ that do not lie in the same totally geodesic plane in $\mathbb H^3.$ The points $\{v_1,v_2,v_3,v_4\}$ are the \emph{ideal vertices} of the ideal hyperbolic tetrahedron. The \emph{edge} $e_{ij}$ of the ideal hyperbolic tetrahedron is the geodesic connecting $v_i$ and $v_j;$ and the dihedral angle $\theta_{ij}$ of the ideal hyperbolic tetrahedron at the edge $e_{ij}$ is the angle between the two totally geodesic planes respectively passing through the ideal vertices $\{v_i,v_j,v_k\}$ and $\{v_i,v_j,v_l\},$ where $\{k,l\}=\{1,2,3,4\}\setminus \{i,j\}.$ 

A \emph{decoration} of an ideal hyperbolic tetrahedron is an assignment of a horosphere $H_i,$  for each $i\in\{1,\dots, 4\},$ at the ideal vertex $v_i.$ We call an ideal hyperbolic tetrahedron together with a decoration a \emph{decorated ideal hyperbolic tetrahedron}. For a decorated ideal hyperbolic tetrahedron, the \emph{edge length} $x_{ij}$ of the edge $e_{ij}$ is defined to be the signed distance between the points of intersection $e_{ij}\cap H_i$ and $e_{ij}\cap H_j,$ with the sign being positive if $H_i$ and $H_j$ are disjoint, being $0$ if $H_i$ and $H_j$ intersect tangentially at a point on $e_{ij},$ and being negative if $H_i$ and $H_j$ intersect transversely. 

Let $\Delta$ be a decorated ideal hyperbolic tetrahedron  with the assigned horospheres $\{H_1,\dots,H_4\}$ and edge lengths $(x_{12},\dots,x_{34}).$ For $i\in\{1,2,3,4\},$ let $T_i=\Delta\cap H_i.$ Then all the $T_i$'s are Euclidean triangles similar to the Euclidean triangle $T$ with sides $\big(e^{\frac{x_{12}+x_{34}}{2}},e^{\frac{x_{13}+x_{24}}{2}},e^{\frac{x_{14}+x_{23}}{2}}\big).$ See Figure \ref{ideal}. As a consequence,  one has:
 \begin{enumerate}[(1)]
     \item  The dihedral angles of $\Delta$ satisfy
     \begin{equation}\label{sum=pi}
         \theta_{ij}+\theta_{ik}+\theta_{il}=\pi
     \end{equation}
     for $\{i,j,k,l\}=\{1,2,3,4\},$
     which implies that
     $$\theta_{12}=\theta_{34},\quad \theta_{13}=\theta_{24}\quad \text{and}\quad \theta_{14}=\theta_{23}. $$
 
     \item The edge lengths  of $\Delta$ satisfy the  triangle inequality 
     \begin{equation}\label{tri}
       e^{\frac{x_{ij}+x_{kl}}{2}}+e^{\frac{x_{ik}+x_{jl}}{2}}>e^{\frac{x_{il}+x_{jk}}{2}}  
     \end{equation}
     for $\{i,j,k,l\}=\{1,2,3,4\}.$
 \end{enumerate}

\begin{figure}[htbp]
\centering
\includegraphics[scale=0.3]{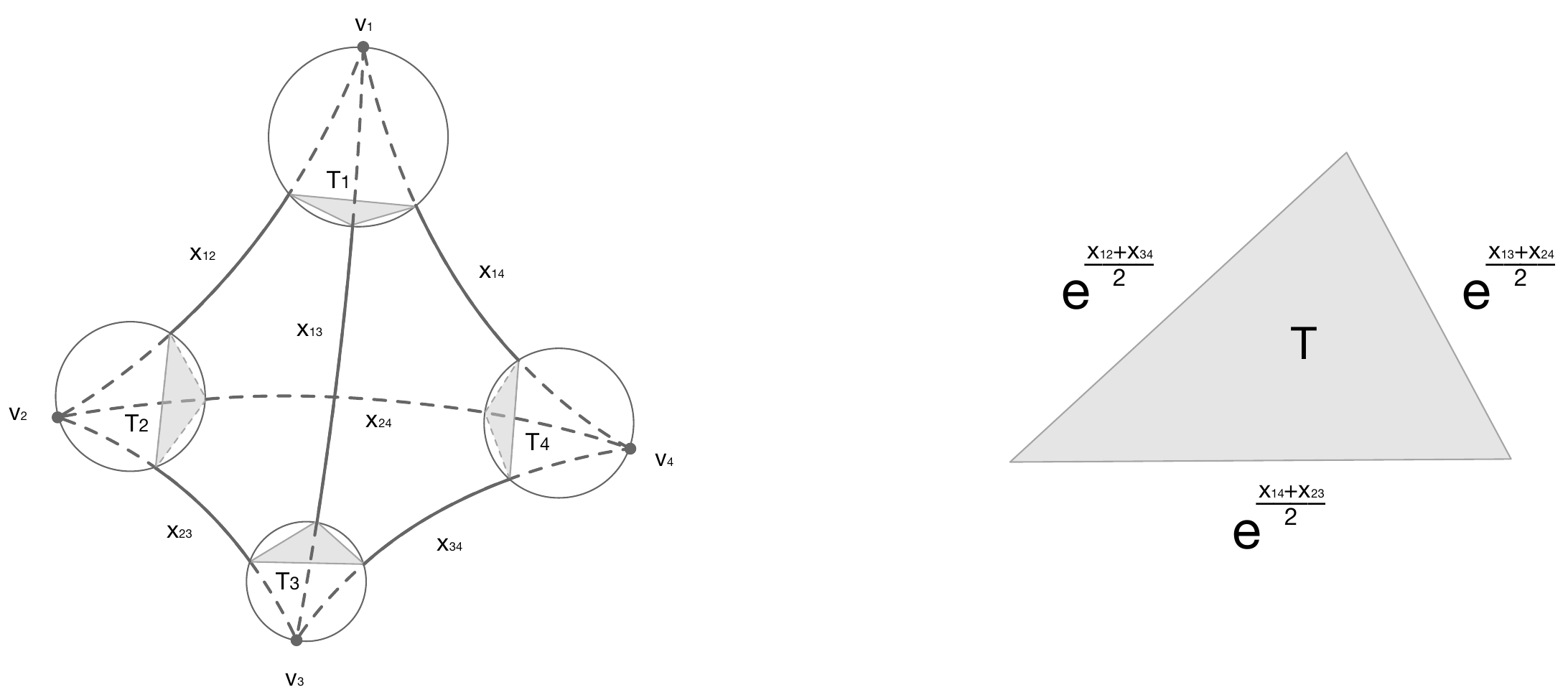}
\caption{Decorated ideal hyperbolic tetrahedron}
\label{ideal}
\end{figure}

 For a decorated ideal hyperbolic tetrahedron $\Delta$ with dihedral angle $(\theta_{12},\dots,\theta_{34})$ and edge lengths $\boldsymbol x= (x_{12},\dots,x_{34}),$ let $\mathrm{Vol}(\Delta)$ be the hyperbolic volume of $\Delta.$ Then the \emph{co-volume} of $\Delta$ is defined by
$$\mathrm{Cov}(\Delta)=\mathrm{Vol}(\Delta)+\frac{1}{2}\sum_{\{i,j\}\subset \{1,2,3,4\}}\theta_{ij}x_{ij},$$
which satisfies the following co-Schl\"afli Formula
\begin{equation}\label{co-sch}
    \frac{\partial \mathrm{Cov}(\Delta)}{\partial x_{ij}}=\frac{\theta_{ij}}{2}
\end{equation}
for each $\{i,j\}\subset\{1,2,3,4\}.$
 \medskip
 
For a decorated ideal hyperbolic tetrahedron $\Delta$ with edge lengths $\boldsymbol x= (x_{12},\dots,x_{34}),$  the  \emph{Gram matrix in the edge lengths} is defined by
\begin{equation*}
\mathrm{Gram}(\Delta)=\begin{bmatrix}
0 & -\frac{e^{x_{12}}}{2} & -\frac{e^{x_{13}}}{2}
    & -\frac{e^{x_{14}}}{2} \\
-\frac{e^{x_{12}}}{2} & 0 &-\frac{e^{x_{23}}}{2}
    & -\frac{e^{x_{24}}}{2}\\
   -\frac{e^{x_{13}}}{2} & -\frac{e^{x_{23}}}{2} & 0
    & -\frac{e^{x_{34}}}{2} \\
-\frac{e^{x_{14}}}{2} & -\frac{e^{x_{24}}}{2} &-\frac{e^{x_{34}}}{2}
    & 0
  \end{bmatrix}.
\end{equation*}
Then by a direct computation and (\ref{tri}), we have 
\begin{equation}\label{det-}
   \det\mathrm{Gram}(\Delta)<0.
\end{equation}
Also,  by the Law of Cosine to the Euclidean triangle $T,$
one has
\begin{equation}\label{cos}
    \cos\theta_{ij}=\frac{G_{kl}}{\sqrt{G_{kk}G_{ll}}},
\end{equation}
where $\{i,j,k,l\}=\{1,2,3,4\},$ and for $i,j\in\{1,2,3,4\},$ $G_{ij}$ is the $ij$-th co-factor of $\mathrm{Gram}(\Delta).$

\bigskip

 In the rest of this paper, we will fix and stick to the following identifications $(x_1,x_2,x_3,x_4,x_5,x_6)=(x_{12},x_{13},x_{23},x_{34},x_{24},x_{14})$  and 
$(\theta_1,\theta_2,\theta_3,\theta_4,\theta_5,\theta_6)=(\theta_{12},\theta_{13},\theta_{23},\theta_{34},\theta_{24},\theta_{14}).$

\subsection{Angle structures}\label{AS}

Let $(M,\mathcal{T})$ be an ideally triangulated 3-manifold with non-empty boundary, and let 
$E$ and $T$ respectively be the sets of edges and tetrahedra. We call a pair $(\Delta,e),$ $\Delta\in T$ and $e\in E,$  a corner of $\mathcal T$ if $\Delta$ is adjacent to  $e.$ We also denote by $\Delta\sim e$ when $(\Delta,e)$ is a corner. An \emph{angle assignment} on $(M, \mathcal T)$ assigns each corner a number $\theta_{(\Delta,e)}$ in $(0,\pi),$  called
the dihedral angle of $e$ in $\Delta,$ so that sum of
the dihedral angles at three edges in the same tetrahedron
$\Delta$ adjacent to each vertex equals $\pi.$ By (\ref{sum=pi}), these are exactly the conditions for six numbers in $(0,\pi)$ to be the dihedral angles of an ideal hyperbolic tetrahedron. 
 The {\it cone angle} of
an angle assignment is the assignment  $ \Theta \in \mathbb R^E$ that assigns  each edge $e$ to the sum of
dihedral angles at $e.$ An \emph{angle structure} is an  angle assignment with the cone angle $\Theta=(2\pi,\dots,2\pi).$


\section{Ideal $\mathrm {U}_{q\tilde q}\mathfrak{sl}(2;\mathbb R)$-$6j$ symbols}

\begin{proposition}[Well-definedness]\label{lem:ab-conver}
For any $(a_1,\ldots,a_6)\in\big(\frac{Q}{2}+\mathbf i \R\big)^6,$ the integral in
(\ref{b-6j}) converges absolutely for any vertical $\Gamma$ passing $(\frac{11Q}{6},2Q),$ and 
is independent of the choice of $\Gamma.$
\end{proposition}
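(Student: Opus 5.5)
The plan is to estimate the modulus of the integrand
$$F(u)=\frac{\prod_{i=1}^4 T_b(u-t_i)}{T_b(2Q-u)}\prod_{j=1}^3S_b(q_j-u)$$
on a vertical line $u=c+\mathbf i s$, with $c\in(\frac{11Q}{6},2Q)$ and $s\in\mathbb R$. From this we will get absolute convergence. Independence of $\Gamma$ will then follow from Cauchy's theorem applied to rectangles between two such lines.

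\textbf{Step 1: the Gaussian factors.} Write $t_i=\frac{3Q}{2}+\mathbf i\tau_i$ and $q_j=2Q+\mathbf i\kappa_j$ with $\tau_i,\kappa_j\in\mathbb R$. For $z=x+\mathbf iy$ one has
$$|T_b(z)|=e^{\frac{\pi}{2}\mathrm{Im}(z^2-Qz)}=e^{\frac{\pi}{2}y(2x-Q)}.$$
Hence $|T_b(u-t_i)|=e^{\frac{\pi}{2}(s-\tau_i)(2c-4Q)}$ and $|T_b(2Q-u)|^{-1}=e^{-\frac{\pi}{2}s(2c-3Q)}$. The exponent of the product of these Gaussian factors is therefore linear in $s$:
$$\frac{\pi}{2}\,s\,\big(4(2c-4Q)-(2c-3Q)\big)+\text{const}=\frac{\pi}{2}\,s\,(6c-13Q)+\text{const}.$$

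\textbf{Step 2: the double sine factors.} We have $\mathrm{Re}(q_j-u)=2Q-c\in(0,\frac{Q}{6})$, so $S_b(q_j-u)$ is given by the integral formula and is analytic there. We will use the standard asymptotics of the double sine function: for $\mathrm{Re}\,z$ in a compact subset of $(0,Q)$ and $\mathrm{Im}\,z\to\pm\infty$,
$$S_b(z)\sim e^{\mp\frac{\pi\mathbf i}{2}(z^2-Qz+\frac{Q^2+1}{6})},\qquad\text{so}\qquad |S_b(x+\mathbf iy)|\approx e^{\pm\frac{\pi}{2}y(2x-Q)}.$$
These asymptotics follow from the functional equations and the integral representation, and they hold uniformly in the real part. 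Applied to $z=q_j-u$, with $x=2Q-c$ and $y=\kappa_j-s$, this shows that $\prod_j|S_b(q_j-u)|$ behaves like $e^{\frac{3\pi}{2}|s|(2c-3Q)}$ up to bounded factors.

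\textbf{Step 3: combining.} As $s\to+\infty$ the total exponent is, up to constants,
$$\tfrac{\pi}{2}s\big[(6c-13Q)+3(2c-3Q)\big]=\tfrac{\pi}{2}s\,(12c-22Q),$$
which is negative exactly when $c<\frac{11Q}{6}$. This contradicts the hypothesis, so I have mis-tracked a sign somewhere. The first real task is to redo this bookkeeping carefully: check the sign convention in $S_b$ (whether $S_b(z)$ or $1/S_b(z)$ carries $e^{\frac{\pi\mathbf i}{2}z^2}$ as $\mathrm{Im}\,z\to+\infty$), and the sign of $\mathrm{Im}(z^2)$ for $T_b$.

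The expected outcome is the following. In one direction the combined exponent is a negative multiple of $|s|$ precisely when $c>\frac{11Q}{6}$, which is where the bound $\frac{11Q}{6}$ comes from. In the other direction the exponent is a negative multiple of $|s|$ for every $c<2Q$. The upper bound $c<2Q$ is also needed to avoid the poles of $S_b(q_j-u)$, which lie at $\mathrm{Re}(q_j-u)\le 0$, that is at $c\ge 2Q$. With both directions in hand, $|F(c+\mathbf is)|\le Ce^{-\epsilon|s|}$, giving absolute convergence. Obtaining these two-sided exponential bounds uniformly in $c$ on compact subintervals of $(\frac{11Q}{6},2Q)$ is the main obstacle. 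The tool there is a uniform version of the double-sine asymptotic estimate with error $O(1)$ in the exponent.

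\textbf{Step 4: independence of $\Gamma$.} Let $\frac{11Q}{6}<c_1<c_2<2Q$. The integrand is holomorphic in the strip $c_1\le\mathrm{Re}\,u\le c_2$:
\begin{itemize}
\item[(i)] the $T_b$ factors are entire and nonvanishing;
\item[(ii)] $S_b(q_j-u)$ has zeros and poles only at $\mathrm{Re}(q_j-u)\ge Q$ or $\le 0$, and the strip has $\mathrm{Re}(q_j-u)\in(0,\frac{Q}{6})$.
\end{itemize}
Integrate over the boundary of the rectangle $c_1\le\mathrm{Re}\,u\le c_2$, $|\mathrm{Im}\,u|\le R$. The horizontal sides contribute $O(e^{-\epsilon R})$ by the uniform bound from Step 3. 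Letting $R\to\infty$ gives equality of the integrals over the two vertical lines.
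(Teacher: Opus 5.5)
Your strategy is the same as the paper's: take the exact modulus of the $T_b$ factors, bound $|S_b(z)|\leqslant Ce^{\pi(\mathrm{Re}z-\frac{Q}{2})|\mathrm{Im}z|}$ for $0<\mathrm{Re}z<\frac{Q}{2}$, then shift the contour. As written, though, the proposal does not prove convergence. Step 3 gives a wrong answer, and the correct behaviour is only asserted as the ``expected outcome.'' The error is not where you look for it. Your formula $|T_b(x+\mathbf iy)|=e^{\frac{\pi}{2}y(2x-Q)}$ is right. Your double sine asymptotics also agree with the paper's (\ref{ST}): $S_b/T_b\to 1$ as $\mathrm{Im}z\to+\infty$ and $S_bT_b\to 1$ as $\mathrm{Im}z\to-\infty$. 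The slip is the substitution $x=2Q-c$ in Step 2, which gives $2x-Q=3Q-2c<0$, not $2c-3Q$. Hence $\prod_j|S_b(q_j-u)|\leqslant C^3e^{-\frac{\pi}{2}(2c-3Q)\sum_j|\kappa_j-s|}$, which decays in $|s|$ rather than grows.

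With this corrected, the total exponent up to an additive constant has two regimes:
\begin{itemize}
\item for $s\gg 0$ it is $\frac{\pi}{2}s\big[(6c-13Q)-3(2c-3Q)\big]=-2\pi Qs$, which decays for every $c$;
\item for $s\ll 0$ it is $\frac{\pi}{2}s\big[(6c-13Q)+3(2c-3Q)\big]=\pi(6c-11Q)s$, which decays exactly when $c>\frac{11Q}{6}$.
\end{itemize}
Your expression $\frac{\pi}{2}s(12c-22Q)$ is in fact the correct exponent, but for $s\to-\infty$. That is why you found the threshold $\frac{11Q}{6}$ with the inequality reversed. These are the paper's estimates (\ref{+bound}) and (\ref{-bound}), and with them your Step 4 goes through. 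The uniformity in $c$ needed for the horizontal sides only requires the constant $C$ in the $S_b$ bound to be locally uniform in $\mathrm{Re}z\in(0,\frac{Q}{2})$, which is routine rather than the main obstacle. The real content missing from your proposal is just this sign correction.
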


\begin{proof}
As for $z\in\mathbb C,$ 
\begin{equation}\label{Tbnorm}
    |T_b(z)|=e^{\pi \big(\mathrm{Re}z -\frac{Q}{2}\big)\mathrm{Imz}},
\end{equation}
we have for $u\in \Gamma$ that 
\begin{equation}\label{Test}
\begin{split}
\Bigg|\frac{\prod_{i=1}^4 T_b(u-t_i)}{T_b(2Q-u)}\Bigg| = & e^{\sum_{i=1}^4\pi \big(\mathrm{Re}(u-t_i)-\frac{Q}{2}\big)\mathrm{Im}(u-t_i)-\pi\big(\mathrm{Re}(2Q-u)-\frac{Q}{2}\big)\mathrm{Im}(2Q-u)}\\
= & K e^{3\pi \big(\mathrm{Re}u-\frac{13Q}{6}\big)\mathrm{Im}u},
\end{split}
\end{equation}
where $K=e^{-\pi (\mathrm{Re}u-2Q)\sum_{i=1}^4\mathrm{Im}t_i}$ is a constant depending on $a_1,\dots, a_6, b$  and $\mathrm{Re}u,$ and is independent of $\mathrm{Im}u.$ Next, by \cite[Formula (2.20)]{LMSWY}, for $z\in \mathbb C$ with $0<\mathrm{Re}z <\frac{Q}{2},$ there is a constant $C$ depending on $b$ and $\mathrm{Re}z,$ and is independent of $\mathrm{Im}z,$ such that 
\begin{equation}\label{Sbbound}
|S_b(z)|\leqslant Ce^{\pi \big(\mathrm{Re}z -\frac{Q}{2}\big)|\mathrm{Imz}|}.
\end{equation}
As a consequence, we have 
\begin{equation*}
\begin{split}
\Bigg|\prod_{j=1}^3 S_b(q_j-u)\Bigg| \leqslant & C^3e^{\sum_{j=1}^3\pi \big(\mathrm{Re}(q_j-u)-\frac{Q}{2}\big)|\mathrm{Im}(q_j-u)|}= C^3e^{\pi \big(\frac{3Q}{2}-\mathrm{Re}u\big)\sum_{j=1}^3|\mathrm{Im}(q_j-u)|}.
\end{split}
\end{equation*}
Let $L$ be large enough so that $L>|\mathrm{Im}q_j|$ for all $j\in\{1,2,3\}.$ Then for $u\in\Gamma$ with $\mathrm{Im}u >L,$ we have 
\begin{equation}\label{Sest+}
\begin{split}
\Bigg|\prod_{j=1}^3 S_b(q_j-u)\Bigg| \leqslant &  C^3e^{-\pi \big(\frac{3Q}{2}-\mathrm{Re}u\big)\sum_{j=1}^3\mathrm{Im}(q_j-u)}=M_1e^{3\pi \big(\frac{3Q}{2}-\mathrm{Re}u\big)\mathrm{Im}u},
\end{split}
\end{equation}
where $M_1=C^3e^{-\pi\big(\frac{3Q}{2}-\mathrm{Re}u\big)\sum_{j=1}^3\mathrm{Im}q_j}$  is a constant depending on $a_1,\dots, a_6, b$  and $\mathrm{Re}u,$ and is independent of $\mathrm{Im}u;$
and for $u\in\Gamma$ with $\mathrm{Im}u <-L,$ we have 
\begin{equation}\label{Sest-}
\begin{split}
\Bigg|\prod_{j=1}^3 S_b(q_j-u)\Bigg|   \leqslant & C^3e^{\pi \big(\frac{3Q}{2}-\mathrm{Re}u\big)\sum_{j=1}^3\mathrm{Im}(q_j-u)}= M_2e^{-3\pi \big(\frac{3Q}{2}-\mathrm{Re}u\big)\mathrm{Im}u},
\end{split}
\end{equation}
where $M_2=C^3e^{\pi\big(\frac{3Q}{2}-\mathrm{Re}u\big)\sum_{j=1}^3\mathrm{Im}q_j}$  is a constant depending on $a_1,\dots, a_6, b$  and $\mathrm{Re}u,$ and is independent of $\mathrm{Im}u.$ Putting (\ref{Test}) and (\ref{Sest+}) together, we have
\begin{equation}\label{+bound}
    \Bigg|\frac{\prod_{i=1}^4 T_b(u-t_i)}{T_b(2Q-u)}\prod_{j=1}^3 S_b(q_j-u)\Bigg|\leqslant KM_1 e^{-2\pi Q\mathrm{Im}u}
\end{equation}
for $u\in \Gamma$ with $\mathrm{Im}u>L;$
and putting (\ref{Test}) and (\ref{Sest-}) together, we have
\begin{equation}\label{-bound}
    \Bigg|\frac{\prod_{i=1}^4 T_b(u-t_i)}{T_b(2Q-u)}\prod_{j=1}^3 S_b(q_j-u)\Bigg|\leqslant KM_2e^{6\pi \big(\mathrm{Re}u-\frac{11Q}{6}\big)\mathrm{Im}u}
\end{equation}
for $u\in \Gamma$ with $\mathrm{Im}u<-L.$
Therefore, for any vertical line $\Gamma$ passing the interval $\big(\frac{11Q}{6},2Q\big),$ the integrand in (\ref{b-6j}) decays exponentially at infinity, and the integral absolutely converges. 

Next, we show that the integral does not depend on the choice of the contour. For a vertical line $\Gamma$ and any $\lambda>0$, we let $\Gamma_\lambda=\{u\in \Gamma\mid -\lambda\leqslant \mathrm{Im}u\leqslant \lambda\}$; and to simplify the notation we denote the integrand of (\ref{b-6j}) by $F(u)$. Then for two vertical lines   $\Gamma_1$ and $\Gamma_2$ as described in the statement of the proposition, we have
$$\int_{\Gamma_1}F(u)d\mathrm{Im}u-\int_{\Gamma_2}F(u)d\mathrm{Im}u=-\mathbf i\lim_{\lambda\to\infty}\bigg(\int_{\Gamma_{1,\lambda}}F(u)du-\int_{\Gamma_{2,\lambda}}F(u)du\bigg),$$
which vanishes due to the analyticity of $F(u)$, the Residue Theorem and the estimates above.
\end{proof}

\begin{proposition}[Tetrahedral Symmetry]\label{tetrasym}
For any $(a_1,\dots,a_6)\in \big( \frac{Q}{2}+ \mathbf i \mathbb R\big)^6,$ 
\begin{align}
 \bigg|\begin{matrix} 
      a_1 & a_2 & a_3 \\
      a_4 & a_5 & a_6
\end{matrix} \bigg|_b= \bigg|\begin{matrix} 
      a_2 & a_1 & a_3 \\
      a_5 & a_4 & a_6
\end{matrix} \bigg|_b= \bigg|\begin{matrix} 
      a_1 & a_3 & a_2 \\
      a_4 & a_6 & a_5
 \end{matrix} \bigg|_b= \bigg|\begin{matrix}  
      a_1 & a_5 & a_6 \\
      a_4 & a_2 & a_3
 \end{matrix} \bigg|_b.
\end{align}
\end{proposition}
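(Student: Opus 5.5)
The plan is to observe that the definition (\ref{b-6j}) depends on $(a_1,\dots,a_6)$ only through the unordered multisets $\{t_1,t_2,t_3,t_4\}$ and $\{q_1,q_2,q_3\}$. Indeed, the prefactor $\prod_i T_b(2Q-t_i)\big/\prod_{i,j}T_b(q_j-t_i)$ is symmetric in the $t_i$'s and in the $q_j$'s separately. The integrand $\prod_i T_b(u-t_i)\prod_j S_b(q_j-u)/T_b(2Q-u)$ has the same symmetry. By Proposition \ref{lem:ab-conver}, the contour $\Gamma$ is any vertical line through $(\frac{11Q}{6},2Q)$, and this condition does not involve the ordering. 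So once each of the three permutations of the six-tuple is shown to permute the $t_i$'s among themselves and the $q_j$'s among themselves, the prefactor, the integrand and the admissible contours all coincide, and the identities follow. The square root convention $T_b(\cdot)^{1/2}$ is applied to the same expression, so there is no branch ambiguity.

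It then remains to check the three permutations directly.

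For $(a_1,a_2,a_3,a_4,a_5,a_6)\mapsto(a_2,a_1,a_3,a_5,a_4,a_6)$:
\begin{itemize}
\item $t_1=a_1+a_2+a_3$ is fixed.
\item $t_2=a_1+a_5+a_6$ becomes $a_2+a_4+a_6=t_3$, and symmetrically $t_3\mapsto t_2$.
\item $t_4=a_3+a_4+a_5$ is fixed.
\item $q_1=a_1+a_2+a_4+a_5$ is fixed, while $q_2=a_1+a_3+a_4+a_6$ and $q_3=a_2+a_3+a_5+a_6$ are swapped.
\end{itemize}

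For $(a_1,a_2,a_3,a_4,a_5,a_6)\mapsto(a_1,a_3,a_2,a_4,a_6,a_5)$:
\begin{itemize}
\item $t_1$ is fixed.
\item $t_2=a_1+a_5+a_6$ is fixed.
\item $t_3=a_2+a_4+a_6$ becomes $a_3+a_4+a_5=t_4$, and conversely $t_4\mapsto t_3$.
\item $q_1=a_1+a_2+a_4+a_5$ becomes $a_1+a_3+a_4+a_6=q_2$, and conversely $q_2\mapsto q_1$.
\item $q_3$ is fixed.
\end{itemize}

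For $(a_1,a_2,a_3,a_4,a_5,a_6)\mapsto(a_1,a_5,a_6,a_4,a_2,a_3)$:
\begin{itemize}
\item $t_1$ becomes $a_1+a_5+a_6=t_2$, and $t_2$ becomes $a_1+a_2+a_3=t_1$.
\item $t_3=a_2+a_4+a_6$ becomes $a_5+a_4+a_3=t_4$, and $t_4$ becomes $a_6+a_4+a_2=t_3$.
\item $q_1$ becomes $a_1+a_5+a_4+a_2=q_1$.
\item $q_2$ becomes $a_1+a_6+a_4+a_3=q_2$.
\item $q_3$ becomes $a_5+a_6+a_2+a_3=q_3$.
\end{itemize}

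There is no real obstacle. The only point requiring care is the bookkeeping of the index conventions for the $t_i$ and $q_j$, which is done above.
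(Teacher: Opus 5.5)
Your proposal is correct and follows the paper's own argument: the three permutations only permute the $t_i$'s among themselves and the $q_j$'s among themselves, so the prefactor, the integrand and the admissible contours in the definition are unchanged. Your explicit index bookkeeping for all three permutations checks out.
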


\begin{proof} As the above permutations of $a_k$'s do not change the $t_i$'s and $q_j$'s, the result follows directly from the definition.
\end{proof}

\begin{proposition}[Change of horospheres]\label{prop: horosphere}
For any $(a_1,\dots,a_6)\in \big( \frac{Q}{2}+ \mathbf i \mathbb R\big)^6,$ 
\begin{equation}\label{coh}
  \bigg|\begin{matrix}  a_1 & a_2 & a_3  \\ a_4 +\mathbf is & a_5+\mathbf is & a_6 +\mathbf is \end{matrix}  \bigg|_b= e^{-{\pi} Qs}\bigg|\begin{matrix}  a_1 & a_2 & a_3  \\ a_4 & a_5 & a_6 \end{matrix}\bigg|_b.  
\end{equation}
\end{proposition}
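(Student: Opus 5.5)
The plan is to track how the shift $(a_4,a_5,a_6)\mapsto(a_4+\mathbf is,a_5+\mathbf is,a_6+\mathbf is)$ acts on the quantities $t_i$ and $q_j$, and then to absorb most of the change by translating the integration variable. Each of $t_2,t_3,t_4$ contains exactly two of $a_4,a_5,a_6$, so each shifts by $2\mathbf is$. Likewise each $q_j$ contains exactly two of them and shifts by $2\mathbf is$. Only $t_1=a_1+a_2+a_3$ is unchanged. Consequently:
\begin{itemize}
\item all differences $q_j-t_i$ with $i\in\{2,3,4\}$ are invariant;
\item $q_j-t_1$ shifts by $2\mathbf is$;
\item $2Q-t_i$ shifts by $-2\mathbf is$ for $i\in\{2,3,4\}$.
\end{itemize}

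In the integral of (\ref{b-6j}) for the shifted parameters, I will substitute $u\mapsto u+2\mathbf is$. A vertical line $\Gamma$ is invariant under vertical translation, and $d\mathrm{Im}u$ is translation invariant. Absolute convergence from Proposition \ref{lem:ab-conver} justifies the change of variables. After the substitution, the factors $S_b(q_j-u)$ and $T_b(u-t_i)$ for $i=2,3,4$ are exactly those of the unshifted integrand. The only discrepancy is the ratio
$$\frac{T_b(u+2\mathbf is-t_1)}{T_b(u-t_1)}\cdot\frac{T_b(2Q-u)}{T_b(2Q-u-2\mathbf is)}.$$

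Because $T_b$ is the exponential of a quadratic polynomial, $T_b(z+w)/T_b(z)=e^{-\frac{\pi\mathbf i}{2}(2zw+w^2-Qw)}$. Applying this with $w=2\mathbf is$, the $w^2$ terms cancel between the two factors. The $u$-dependence also cancels, since $(u-t_1)+(2Q-u)=2Q-t_1$. The ratio is therefore the constant $e^{-\pi\mathbf i w(Q-t_1)}=e^{2\pi s(Q-t_1)}$, which can be pulled out of the integral.

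It then remains to compute the ratio of the square-root prefactors, using the convention $T_b(z)^{1/2}=e^{-\frac{\pi\mathbf i}{4}(\cdots)}$. These prefactors are also exponentials of quadratics, so the ratio is explicit. The numerator contributes $\prod_{i=2}^4 T_b(2Q-t_i-2\mathbf is)^{1/2}/T_b(2Q-t_i)^{1/2}$. The denominator contributes $\prod_{j} T_b(q_j-t_1+2\mathbf is)^{1/2}/T_b(q_j-t_1)^{1/2}$. Using the same difference formula, each ratio is an exponential linear in the shifted quantities. I will expand these exponentials, multiply by $e^{2\pi s(Q-t_1)}$, and simplify.

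For the simplification I will use $\mathrm{Re}\,t_i=\frac{3Q}{2}$ and $\mathrm{Re}\,q_j=2Q$. The key linear relations are $\sum_j q_j=t_1+t_2+t_3+t_4+\dots$, more precisely $\sum_{j}q_j = 2(a_1+\cdots+a_6)$ and $\sum_i t_i=2(a_1+\cdots+a_6)$. These should make all $\mathrm{Im}\,a_k$-dependence and all $s^2$ terms cancel, leaving exactly $e^{-\pi Qs}$.

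The main obstacle is this final bookkeeping step. I need to verify that the quadratic terms in $s$ coming from the square roots (three from the numerator and three from the denominator) cancel. I also need the imaginary linear terms to cancel, so that the result is real, and the surviving real part to equal $-\pi Qs$. I will first check the $s^2$ coefficient and then the terms linear in $s$, separately.
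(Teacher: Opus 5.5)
Your proposal is correct and is essentially the paper's proof: record how the $t_i$ and $q_j$ shift, translate $u\mapsto u+2\mathbf is$ along the same vertical line, and use that $T_b$ is the exponential of a quadratic. If you carry out your bookkeeping, the $s^2$ terms cancel pairwise and the remaining exponent is $\pi s\big(\sum_{i=1}^4 t_i-\sum_{j=1}^3 q_j-Q\big)=-\pi Qs$ by $\sum_i t_i=\sum_j q_j$, so the facts about $\mathrm{Re}\,t_i$ and $\mathrm{Re}\,q_j$ are not needed.
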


\begin{proof} As adding $\mathbf is$ to $a_4,$ $a_5$ and $a_6$ keeps $t_1$ unchanged, and changes $t_i$ to $t_i+2\mathbf is$ for $i\in\{2,3,4\}$ and $q_j$ to $q_j+2\mathbf is$ for $j\in\{1,2,3\},$ the result follows from the facts that 
\begin{equation}\label{TT}
    \frac{T_b(z+\mathbf i t)}{T_b(z)}=e^{\pi zt -\frac{\pi tQ}{2} +\frac{\pi\mathbf i t^2}{2}}\quad\text{and}\quad \frac{T_b(z-\mathbf i t)}{T_b(z)}=e^{-\pi zt +\frac{\pi tQ}{2} +\frac{\pi\mathbf i t^2}{2}}
\end{equation}
and the  change of variable $u\mapsto u+2\mathbf is$ in the integral (\ref{b-6j}).
\end{proof}

 \begin{proposition}[Pentagon Identity]\label{32}
For $(a_1,\dots,a_9)\in{\big(\frac{Q}{2}+\mathbf{i}\mathbb R\big)}^{9}$, the identity
\begin{equation}\label{pentagon}
  \begin{split}
\int_{ \frac{Q}{2}+\mathbf{i}\mathbb R}|T_b(2a)|^2\bigg|\begin{matrix} 
    a_1 & a_2 & a_6\\
      a_8 & a_7 & a 
   \end{matrix}\bigg|_b
   \bigg|\begin{matrix} 
    a_2 & a_3 & a_4\\
      a_9 & a_8 & a 
   \end{matrix}\bigg|_b
   \bigg|\begin{matrix} 
    a_3 & a_1 & a_5\\
      a_7 & a_9 & a 
   \end{matrix}\bigg|_b
   d\mathrm{Im}a=\bigg|\begin{matrix} 
    a_1 & a_2 & a_6\\
      a_4 & a_5 & a_3 
   \end{matrix}\bigg|_b
   \bigg|\begin{matrix} 
   a_7 & a_8 & a_6\\
      a_4 & a_5 & a_9 
   \end{matrix}\bigg|_b\\
   \end{split}
    \end{equation}
   holds as  continuous functions in $(a_1,\dots,a_9)\in {\big(\frac{Q}{2}+\mathbf{i}\mathbb R\big)}^9$.
\end{proposition}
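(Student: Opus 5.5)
The plan is to obtain the ideal Pentagon Identity as a limit of the Pentagon (Biedenharn--Elliott) identity for the ordinary $\mathrm{U}_{q\tilde q}\mathfrak{sl}(2;\mathbb R)$-$6j$ symbols used in \cite{LMSWY}. The link between the two is the idealization Lemma \ref{AI}. It presents each ideal $6j$ symbol as a renormalized limit of ordinary $6j$ symbols when certain colors are sent to infinity along $\frac{Q}{2}+\mathbf i\mathbb R$. Geometrically, the hyperideal vertices become ideal.

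Concretely, the four vertices of the $2$-$3$ configuration correspond to four ``vertex'' triples of colors. I will introduce a parameter $L\to+\infty$ and shift each color $a_k$ by $\mathbf i L$ times the number of ideal vertices it is adjacent to. I will shift the integration variable $a$ in the same way, which is the analogue of the horosphere action. The ordinary Pentagon Identity holds for every $L$. I then multiply both sides by the product of renormalization factors from Lemma \ref{AI}. These factors are explicit exponentials in $L$ and the colors, of the same type as in Proposition \ref{prop: horosphere}.

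The first thing to check is that these factors match on the two sides. Each side contains the same edges and vertices, and the extra edge measure $|T_b(2a)|^2$ on the left accounts for the interior edge. Using (\ref{TT}), the normalizations should then agree. After this bookkeeping, the right-hand side converges pointwise to the product of two ideal $6j$ symbols by Lemma \ref{AI}. The integrand on the left converges pointwise to the ideal integrand, and it remains to interchange the limit with $\int d\mathrm{Im}a$.

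The main obstacle is this interchange of limit and integral. I would handle it with dominated convergence, which requires a bound on the renormalized ordinary $6j$ symbols that is uniform in $L$ and integrable in $\mathrm{Im}a$. To get it, I would write the symbols in the integral form of Definition \ref{def:ideal 6j} before the limit. I would then bound each factor using (\ref{Tbnorm}) and (\ref{Sbbound}), together with the estimates (\ref{+bound}) and (\ref{-bound}). The aim is an estimate of the form $C e^{-\epsilon|\mathrm{Im}a|}$ with $C$ and $\epsilon$ locally uniform in $(a_1,\dots,a_9)$.

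Along the way I will use Proposition \ref{lem:ab-conver} to move the contour $\Gamma$ uniformly inside $(\frac{11Q}{6},2Q)$, so that all three integrands decay exponentially at the same time. The decay in $a$ comes from the $T_b$ factors involving $a$: each of the three symbols contributes linear exponential decay in $\mathrm{Im}a$, and this beats the growth $|T_b(2a)|^2=e^{Q x_a/b}$.

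If this route turns out too delicate, I would fall back on a direct proof. That would mean expanding the three ideal symbols as contour integrals and applying Fubini, justified by the same absolute bounds. I would then evaluate the $a$-integral using the Faddeev quantum dilogarithm Fourier identities and the $b$-beta integral (Ramanujan/Saalschütz type) for $S_b$ and $T_b$, reducing the left side to the product on the right.

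Finally, I would obtain continuity in $(a_1,\dots,a_9)$ from the same locally uniform domination. Both sides are then continuous, and the identity holds everywhere.
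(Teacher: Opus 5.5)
Your route is the paper's: you pass to the limit $s\to+\infty$ in the pentagon identity of \cite{LMSWY} with all colors shifted by $\mathbf i s$. The surplus factor $e^{2\pi Qs}$ is absorbed by $|S_b(2a_s)|^2\approx|T_b(2a)|^2e^{2\pi Qs}$, the right side is handled by Lemma~\ref{AI}, and the left side by dominated convergence.

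The domination you only sketch is Lemma~\ref{bib} and Corollary~\ref{cor} in the paper. These bound $e^{2\pi Qs}\{\cdots\}_b$ uniformly in $s$ by $De^{\pi(7Q-4c)\mathrm{Im}a}$ for $\mathrm{Im}a\geqslant 0$ and by $De^{\pi(2c-4Q)\mathrm{Im}a}$ for $\mathrm{Im}a\leqslant 0$. Two points differ from your heuristic. First, integrability against $|T_b(2a)|^2$ then requires the contour at $c\in\big(\frac{23Q}{12},2Q\big)$, not anywhere in $\big(\frac{11Q}{6},2Q\big)$. Second, as $\mathrm{Im}a\to-\infty$ the symbol bounds grow, and the decay is supplied by $|T_b(2a)|^2$ rather than by the three symbols.
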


The proof of Proposition \ref{32} relies on the following Lemma \ref{AI}, Lemma \ref{bib} and Corollary \ref{cor}.

\begin{lemma}[Idealization of $\mathrm {U}_{q\tilde q}\mathfrak{sl}(2;\mathbb R)$-$6j$ symbols]\label{AI} For any $(a_1,\dots,a_6)\in \big( \frac{Q}{2}+ \mathbf i \mathbb R\big)^6,$ 
\begin{equation}\label{6jlim}
 \bigg|\begin{matrix} a_1 & a_2 & a_3  \\ a_4 & a_5 & a_6 \end{matrix} \bigg|_b   = \lim_{s\in\mathbb R,\ s\to +\infty}  e^{2\pi Q s} \bigg\{\begin{matrix} a_1+\mathbf is & a_2 +\mathbf is & a_3+\mathbf is  \\ a_4+\mathbf is & a_5+\mathbf is & a_6+\mathbf is \end{matrix} \bigg\}_b.
\end{equation}
Here  $\{\dots\}_b$ is the $\mathrm {U}_{q\tilde q}\mathfrak{sl}(2;\mathbb R)$-$6j$ symbol  computed for any  $(a_1,\dots,a_6)\in \big( \frac{Q}{2}+ \mathbf i \mathbb R_{>0}\big)^6$ by 
\begin{equation}\label{b6}
\bigg\{\begin{matrix} a_1 & a_2 & a_3 \\ a_4 & a_5 & a_6 \end{matrix} \bigg\}_b=\Bigg(\frac{1}{\prod_{i=1}^4\prod_{j=1}^4S_b(q_j-t_i)}\Bigg)^{\frac{1}{2}}\int_\Gamma \prod_{i=1}^4S_b(u-t_i)\prod_{j=1}^4S_b(q_j-u)d\mathrm{Im} u,
\end{equation}
where $\Gamma$ is any vertical line passing the interval $\big(\frac{3Q}{2},2Q\big)$ oriented upward.  
\end{lemma}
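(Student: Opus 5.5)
The plan is to prove (\ref{6jlim}) by a change of variables in the integral (\ref{b6}), followed by the large-imaginary-part asymptotics of $S_b$ and dominated convergence. I use the standard Teschner--Vartanov normalization, in which the fourth quadruple sum is $q_4=2Q$, independent of the $a_k$'s. Under $a_k\mapsto a_k+\mathbf is$ one has $t_i\mapsto t_i+3\mathbf is$ and $q_j\mapsto q_j+4\mathbf is$ for $j\in\{1,2,3\}$, while $q_4=2Q$ is unchanged.

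In the integral I substitute $u\mapsto u+4\mathbf is$. After this substitution:
\begin{itemize}
\item the three factors $S_b(q_j-u)$, $j\le 3$, are unchanged;
\item each factor $S_b(u-t_i)$ becomes $S_b(u-t_i+\mathbf is)$;
\item the factor $S_b(q_4-u)$ becomes $S_b(2Q-u-4\mathbf is)$.
\end{itemize}
In the prefactor, the terms $S_b(q_j-t_i)$ with $j\le 3$ become $S_b(q_j-t_i+\mathbf is)$, and the terms $S_b(2Q-t_i-3\mathbf is)$ come from $j=4$.

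The key input is the asymptotic $S_b(z)\sim T_b(z)^{\mp1}$ (in the paper's convention for $T_b$) as $\mathrm{Im}z\to\pm\infty$ with $\mathrm{Re}z$ in a compact subset of $(0,Q)$, with exponentially small relative error. Applying it, the shifted factors produce exactly $\prod_i T_b(u-t_i)$ and $1/T_b(2Q-u)$ in the integrand, and $\prod_{i,j\le 3} T_b(q_j-t_i)^{-1}\prod_i T_b(2Q-t_i)$ inside the square root. Each is multiplied by quadratic phases in $s$ obtained from (\ref{TT}).

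Next I check that these phases cancel, leaving only $e^{-2\pi Qs}$, which is exactly compensated by the factor $e^{2\pi Qs}$ in (\ref{6jlim}). This is a bookkeeping computation with (\ref{TT}): the $s^2$ and $s\,\mathrm{Im}(\cdot)$ terms must cancel between numerator and prefactor. The factor $e^{-2\pi Qs}$ mirrors the computation in Proposition \ref{prop: horosphere}. The contour is fixed throughout as a vertical line with real part in $(\frac{11Q}{6},2Q)$. This is admissible both for (\ref{b6}), which needs $(\frac{3Q}{2},2Q)$, and for (\ref{b-6j}), and all real parts of the arguments stay in $(0,Q)$ after the shift.

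The main obstacle is justifying the interchange of the limit with the integral. The asymptotic replacement is only valid where $|\mathrm{Im}(u-t_i)+s|$ and $|\mathrm{Im}u+4s|$ are large, which fails on windows of $\mathrm{Im}u$ near $-s$ and $-4s$. To handle this, I split $\Gamma$ into the region $\mathrm{Im}u\ge -s/2$ and its complement.
\begin{itemize}
\item On $\mathrm{Im}u\ge -s/2$, I use the uniform version of the asymptotics to get pointwise convergence, together with a dominating function of the form (\ref{+bound})--(\ref{-bound}).
\item On the complement, I bound each $S_b$ factor by (\ref{Sbbound}) and its reflection $S_b(z)S_b(Q-z)=1$. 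The aim is to show that the integral of the absolute value, multiplied by $e^{2\pi Qs}$, tends to $0$.
\end{itemize}
The second bullet relies on the strict margin $\mathrm{Re}u>\frac{11Q}{6}$, which gives exponential decay $e^{-\epsilon s}$ beyond the normalization. If this crude bound is not sharp enough near $\mathrm{Im}u\approx -4s$, I will refine it using the exact linear exponents of $|S_b|$ on each of the sub-intervals cut out by the points $-s,-4s$.
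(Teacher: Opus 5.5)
Your proposal is correct, and its skeleton is the paper's: the shift $u\mapsto u+4\mathbf is$, pointwise convergence from (\ref{ST}), phase bookkeeping via (\ref{TT}) leaving exactly $e^{-2\pi Qs}$, and dominated convergence on a fixed line $\mathrm{Re}u=c\in(\frac{11Q}{6},2Q)$.

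The only difference is the domination step, where the paper needs no splitting. Since $\mathrm{Re}(u-t_i)=c-\frac{3Q}{2}$ and $\mathrm{Re}(2Q-u)=\mathrm{Re}(q_j-u)=2Q-c$ lie in $(0,\frac{Q}{2})$, (\ref{Sbbound}) and (\ref{Tbnorm}) give $|S_b(z)|\leqslant C\min\{|T_b(z)|,|T_b(z)|^{-1}\}$ for every value of $\mathrm{Im}z$. The prefactors have modulus one, because all $q_j-t_i$ and $2Q-t_i$ lie on $\mathrm{Re}z=\frac{Q}{2}$. Hence $|F_s(u)|\leqslant C^5|F(u)|$ for all $u$ and $s$, where $F$ is the limiting integrand, which is integrable by (\ref{+bound})--(\ref{-bound}). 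So the windows $\mathrm{Im}u\approx -s,-4s$ that you flag as the main obstacle matter only for asymptotic equality, not for upper bounds, and (\ref{ST}) is needed only pointwise.

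Your split also works, with no refinement needed. On $\mathrm{Im}u<-\frac{s}{2}$, the exponent coming from (\ref{Sbbound}), including the $2\pi Qs$, is piecewise linear in $\mathrm{Im}u$. Its slopes on the pieces cut at roughly $-4s$ and $-s$ are $2\pi Q$, $\pi(5Q-2c)$ and $\pi(6c-11Q)$, all positive. At $\mathrm{Im}u=-\frac s2$ it equals $-\frac{\pi}{2}(6c-11Q)s+O(1)$, so the tail is $O\big(e^{-\frac{\pi}{2}(6c-11Q)s}\big)$. This gives an explicit rate, at the cost of an extra uniform-asymptotics argument on $\mathrm{Im}u\geqslant-\frac s2$.

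One slip to fix: by (\ref{ST}), $S_b(z)\sim T_b(z)^{\pm1}$, not $T_b(z)^{\mp1}$, as $\mathrm{Im}z\to\pm\infty$. Your subsequent bookkeeping already uses the correct direction.
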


\begin{proof} For $s\in\mathbb R,$ we let $t_{i,s}=t_i+ 3\mathbf is$ for each $i\in\{1,2,3,4\},$ and let $q_{j,s}=q_j+4\mathbf is$ for each $j\in\{1,2,3\}$ and let $q_{4,s}=q_4=2Q.$ Fix a vertical line $\Gamma$ passing the interval $\big(\frac{11Q}{6},2Q\big)\subset\big(\frac{3Q}{2},2Q\big),$ and for each $u\in \Gamma$ and $s\in\mathbb R,$ let 
$u_s=u+4\mathbf i s.$ We define
$$F_s(u)\doteq e^{2\pi Qs}\Bigg(\frac{1}{\prod_{i=1}^4\prod_{j=1}^4S_b(q_{j,s}-t_{i,s})}\Bigg)^{\frac{1}{2}} \prod_{i=1}^4S_b(u_s-t_{i,s})\prod_{j=1}^4S_b(q_{j,s}-u_s)$$
and 
$$F(u)\doteq \Bigg(\frac{\prod_{i=1}^4 T_b(2Q-t_i)}{\prod_{i=1}^4\prod _{j=1}^3 T_b(q_j-t_i)}\Bigg)^{\frac{1}{2}} \frac{\prod_{i=1}^4 T_b(u-t_i)}{T_b(2Q-u)}\prod_{j=1}^3S_b(q_j-u).$$
Then by (\ref{b6}) and the change of variable $u\mapsto u_s,$ we have 
$$e^{2\pi Q s} \bigg\{\begin{matrix} a_1+\mathbf is & a_2 +\mathbf is & a_3+\mathbf is  \\ a_4+\mathbf is & a_5+\mathbf is & a_6+\mathbf is \end{matrix} \bigg\}_b=\int_\Gamma F_s(u)d\mathrm{Im}u;$$
and by (\ref{b-6j}), we have
$$ \bigg|\begin{matrix} a_1 & a_2 & a_3  \\ a_4 & a_5 & a_6 \end{matrix} \bigg|_b=\int_\Gamma F(u)d\mathrm{Im}u.$$
We claim that:
\begin{enumerate}[(1)]
    \item For each $u\in\Gamma,$
$$\lim_{s\to +\infty} F_s(u) = F(u).$$
    \item There is an integrable function $G$ on $\Gamma$ such that for each $u\in \Gamma$ and $s>0$ sufficiently large, 
    $$|F_s(u)|\leqslant |G(u)|.$$
\end{enumerate}
Then the result follows immediately from (1), (2) and the Lebesgue Dominated Convergence Theorem. 
\medskip

To see (1), by \cite[Formula (B.53)]{E}, we have for $z\in\mathbb C$ with $0<\mathrm{Re}z<Q,$
\begin{equation}\label{ST}
\lim_{\mathrm{Im}z\to +\infty} \frac{S_b(z)}{T_b(z)}=1\quad\text{and}\quad\lim_{\mathrm{Im}z\to -\infty} {S_b(z)}{T_b(z)}=1.
\end{equation}
Then we have 
\begin{equation*}
\begin{split}
  \lim_{s\to +\infty} F_s(u)= & \lim_{s\to +\infty} e^{2\pi Qs} \Bigg(\frac{\prod_{i=1}^4 T_b(2Q-t_i-3\mathbf is)}{\prod_{i=1}^4\prod _{j=1}^3 T_b(q_j-t_i+\mathbf is)}\Bigg)^{\frac{1}{2}} \frac{\prod_{i=1}^4 T_b(u-t_i+\mathbf is)}{T_b(2Q-u-4\mathbf is)}\prod_{j=1}^3S_b(q_j-u) \\
=& F(u),
\end{split}
\end{equation*}
where the last equality comes from (\ref{TT}). This completes the proof of (1).
\medskip

To see (2), by (\ref{Tbnorm}) and (\ref{Sbbound}) with the constant $C$ therein, we have
\begin{equation}\label{STC}
    |S_b(z)|\leqslant C |T_b(z)|
\end{equation}
if $\mathrm{Im}z\geqslant 0,$
and 
$$|S_b(z)|\leqslant \frac{C}{ |T_b(z)|}$$
if $\mathrm{Im}z < 0.$ Together with \cite[Formula (2.20)]{LMSWY} that 
\begin{equation}\label{=1}
    \bigg|\prod_{i=1}^4\prod_{j=1}^4S_b(q_j-t_i)^{-1}\bigg|=1,
\end{equation}
 we have
\begin{equation*}
\begin{split}
|F_s(u)|\leqslant & C^5 e^{2\pi Qs}  \Bigg|\frac{\prod_{i=1}^4 T_b(u-t_i+\mathbf is)}{T_b(2Q-u-4\mathbf is)}\prod_{j=1}^3 S_b(q_j-u)\Bigg|\\
= &  C^5  \Bigg|\frac{\prod_{i=1}^4 T_b(u-t_i)}{T_b(2Q-u)}\prod_{j=1}^3 S_b(q_j-u)\Bigg|,
\end{split}    
\end{equation*}
where the equality comes from (\ref{TT}). Now by (\ref{+bound}), (\ref{-bound}) and the constants $L,$ $K,$ $M_1$ and $M_2$ therein, we have 
$$|F_s(u)| \leqslant C^5KM_1 e^{-2\pi Q\mathrm{Im}u}$$
for $u\in \Gamma$ with $\mathrm{Im}u>L,$ and 
$$|F_s(u)| \leqslant C^5KM_2 e^{6\pi \big(\mathrm{Re}u-\frac{11Q}{6}\big)\mathrm{Im}u}$$
for $u\in \Gamma$ with $\mathrm{Im}u<-L.$
Also, by (1), there is a $P>0$ such that 
$$|F_s(u)|<P|F(u)|$$
for $s$ sufficiently large and for each $u\in\Gamma$ with $\mathrm{Im}u \in [-L, L].$ Then we can defined the desired function $G$ on $\Gamma$ by 
$$G(u) \doteq  Ne^{-2\pi Q\mathrm{Im}u}$$
if  $\mathrm{Im}u\geqslant 0,$ and 
$$G(u) \doteq  Ne^{6\pi \big(\mathrm{Re}u-\frac{11Q}{6}\big)\mathrm{Im}u}$$
if $\mathrm{Im}u\leqslant 0,$
for some large enough real number 
$$N>\max \bigg\{\sup _{\mathrm{Im}u\in [0,L]}\Big|PF(u)e^{2\pi Q\mathrm{Im}u}\Big|,\sup _{\mathrm{Im}u\in [-L,0]}\Big|PF(u)e^{-6\pi \big(\mathrm{Re}u-\frac{11Q}{6}\big)\mathrm{Im}u}\Big|, C^5KM_1, C^5KM_2 \bigg\}.$$
This completes the proof of (2).
\end{proof}

\begin{lemma}\label{bib}
 For $(a_1,\dots,a_6)\in \big(\frac{Q}{2}+\mathbf i\mathbb R\big)^6,$ let 
 $m = \frac{1}{6}\sum_{k=1}^6\mathrm{Im}a_k;$
 and for each $j\in\{1,2,3\},$ let 
 $b_j=\mathrm{Im}a_j+\mathrm{Im}a_{j+3}-2m.$ 
 Then for each $c \in \big(\frac{11Q}{6},2Q\big),$ there is a constant $C_{b,c}>0$ depending only on $b$ and $c$ such that:
 \begin{enumerate}[(1)]      
 \item For all $(a_1,\dots,a_6)\in \big(\frac{Q}{2}+\mathbf i\mathbb R)^6$ and  $s>0$ large enough so that $\mathrm{Im}a_k+s>0$ for each $k\in\{1,\dots,6\},$ 
 $$\Bigg|e^{2\pi Q s} \bigg\{\begin{matrix} a_1+\mathbf is & a_2 +\mathbf is & a_3+\mathbf is  \\ a_4+\mathbf is & a_5+\mathbf is & a_6+\mathbf is \end{matrix} \bigg\}_b\Bigg|\leqslant C_{b,c} e^{-2\pi Qm + \pi (11Q-6c)\max\{b_1,b_2,b_3\}}.$$

 \item For all $(a_1,\dots,a_6)\in \big(\frac{Q}{2}+\mathbf i\mathbb R)^6,$ 
 $$\Bigg|\bigg|\begin{matrix} a_1 & a_2 & a_3  \\ a_4 & a_5 & a_6 \end{matrix} \bigg|_b\Bigg|\leqslant C_{b,c} e^{-2\pi Qm +\pi  (11Q-6c) \max\{b_1,b_2,b_3\}}.$$
 \end{enumerate}
\end{lemma}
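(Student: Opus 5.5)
The plan is to prove (1) by a direct estimate of the integral (\ref{b6}) along a fixed vertical contour, and then to deduce (2) from (1) via Lemma \ref{AI}.

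Deducing (2) is immediate. By Lemma \ref{AI} the ideal symbol is the limit as $s\to+\infty$ of $e^{2\pi Qs}\{\dots\}_b$ at the shifted arguments. The right-hand side of (1) does not depend on $s$. Hence the same bound passes to the limit.

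For (1), I would first normalize using the structure of the bound. Replacing every $a_k$ by $a_k+\mathbf is$ changes $m$ to $m+s$ and leaves each $b_j$ unchanged. So the factor $e^{2\pi Qs}$ is exactly absorbed by $e^{-2\pi Q(m+s)}$. Thus (1) is equivalent to the following statement for the ordinary symbol, for all $(a_1,\dots,a_6)\in(\frac Q2+\mathbf i\mathbb R_{>0})^6$:
\begin{equation*}
\bigg|\bigg\{\begin{matrix} a_1 & a_2 & a_3 \\ a_4 & a_5 & a_6 \end{matrix}\bigg\}_b\bigg|\leqslant C_{b,c}\, e^{-2\pi Qm+\pi(11Q-6c)\max\{b_1,b_2,b_3\}}.
\end{equation*}

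To prove this, I would take $\Gamma=\{\mathrm{Re}\,u=c\}$, which is allowed since $c\in(\frac{11Q}{6},2Q)\subset(\frac{3Q}{2},2Q)$. By (\ref{=1}) the prefactor in (\ref{b6}) has modulus $1$. For the factors of the integrand:
\begin{itemize}
\item each $S_b(u-t_i)$ has real part $c-\frac{3Q}{2}\in(\frac Q3,\frac Q2)$;
\item each $S_b(q_j-u)$ has real part $2Q-c\in(0,\frac Q6)$.
\end{itemize}
Both lie in the range where (\ref{Sbbound}) applies, with a constant depending only on $b$ and $c$. Writing $y=\mathrm{Im}\,u$, this gives
\begin{equation*}
|\text{integrand}|\leqslant C^8\exp\Big(\pi(c-2Q)\sum_{i=1}^4|y-\mathrm{Im}\,t_i|+\pi\big(\tfrac{3Q}{2}-c\big)\sum_{j=1}^4|y-\mathrm{Im}\,q_j|\Big),
\end{equation*}
where $\mathrm{Im}\,q_4=0$. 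Both coefficients are negative. Hence the exponent $f(y)$ is a concave, piecewise linear function with breakpoints at the $\mathrm{Im}\,t_i$ and $\mathrm{Im}\,q_j$. Its slopes at $\pm\infty$ are $\mp$ a fixed positive constant depending only on $b$ and $c$.

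The main computation then runs as follows. I would use $\sum_i\mathrm{Im}\,t_i=\sum_{j\le3}\mathrm{Im}\,q_j=12m$, together with $\mathrm{Im}\,q_j=4m+b_j'$, where the $b_j'$ are sums of two of the $b_k$ (note $\sum_j b_j=0$). I would also write $\mathrm{Im}\,t_i$ relative to $3m$. Then I would evaluate $f$ at each breakpoint and check that
\begin{equation*}
\max f\leqslant -2\pi Qm+\pi(11Q-6c)\max_j b_j
\end{equation*}
up to an additive constant. The $-2\pi Qm$ should come from the asymmetric breakpoint $\mathrm{Im}\,q_4=0$ together with the positivity of all the $\mathrm{Im}\,a_k$. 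The $(11Q-6c)$ term should come from moving between the $q_j$-cluster near $4m$ and the $t_i$-cluster near $3m$.

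I expect the main obstacle to be converting $\max f$ into a bound on $\int e^{f}$ uniformly in the parameters. A concave piecewise linear $f$ may have a flat or nearly flat stretch of parameter-dependent length, since interior slopes can vanish. My plan is to list the finitely many possible slopes. Where a slope can vanish, I would show that the slope of $f$ on the adjacent pieces is bounded away from zero, or else absorb the length of the flat piece (which is a combination of the $b_j$) into the exponential factor. This may cost a small amount of the slack in $c<2Q$. If it proves too delicate, I would use the slack $c>\frac{11Q}{6}$, which gives strictly negative slopes, to obtain the bound $\int e^{f}\leqslant C\,e^{\max f}$ with $C$ depending only on $b$ and $c$.
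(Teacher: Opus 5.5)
You set up the estimate correctly, matching the paper: (2) from (1) via Lemma \ref{AI}, the contour $\mathrm{Re}\,u=c$, the modulus-one prefactor, and the two $S_b$ bounds. But the core estimate, a bound on $\int e^{f}$ uniform in the parameters, is not actually proved, and the fallback you offer for it is false.

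With all eight absolute values kept, the slope of your exponent is $2\pi(2Q-c)(2-n_T)+2\pi(c-\frac{3Q}{2})(2-n_Q)$, where $n_T$ and $n_Q$ count the points $\mathrm{Im}\,t_i$ and $\mathrm{Im}\,q_j$ lying below $y$. This is exactly $0$ whenever $n_T=n_Q=2$, for every $c\in(\frac{11Q}{6},2Q)$. So the slack $c>\frac{11Q}{6}$ does not make the slopes nonzero.

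By concavity, such a flat piece is precisely the set of maximizers, and its length is unbounded. Write $d_j=\mathrm{Im}\,a_j-\mathrm{Im}\,a_{j+3}$. Then $\mathrm{Im}\,t_i-3m=\frac12(\pm d_1\pm d_2\pm d_3)$ with an even number of minus signs. The $d_j$ do not enter the target bound at all, so the flat length is not a combination of the $b_j$. For instance, $b=(2L,-L,-L)$, $d=(2m,0,0)$, $m\gg L>0$ gives the flat piece $(4m-2L,4m)$.

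Hence $\int e^{f}\leqslant Ce^{\max f}$ fails uniformly. Your breakpoint-by-breakpoint computation of $\max f$ would also have to be uniform in three extra free parameters. Neither step is carried out.

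The missing idea is to discard the absolute values that cause the trouble. Since the coefficients are negative, $-\pi(2Q-c)|y-\mathrm{Im}\,t_i|\leqslant-\pi(2Q-c)(y-\mathrm{Im}\,t_i)$ and $-\pi(c-\frac{3Q}{2})|y|\leqslant-\pi(c-\frac{3Q}{2})y$. Use $\sum_i\mathrm{Im}\,t_i=12m$, the fact that $\mathrm{Im}\,q_1,\mathrm{Im}\,q_2,\mathrm{Im}\,q_3$ are the numbers $4m-b_k$, and set $x=y-4m$. This gives the majorant $g(x)=-2\pi Qm-\pi(\frac{13Q}{2}-3c)x-\pi(c-\frac{3Q}{2})\sum_j|x+b_j|$, in which the $d_j$ have disappeared.

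The slopes of $g$ are $\pi(6c-11Q)>0$, then $\pi(4c-8Q)$, $\pi(2c-5Q)$, $-2\pi Q$. All are bounded away from $0$ precisely because $\frac{11Q}{6}<c<2Q$. So $g$ peaks at $x=-b_{\max}$ with value $-2\pi Qm+\pi(11Q-6c)b_{\max}$ (using $b_1+b_2+b_3=0$), and $\int e^{g}\leqslant C_{b,c}e^{\max g}$. This is the paper's argument.

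Since your $f\leqslant g$ pointwise, this also settles your flat-piece concern. A flat piece of length $\ell$ forces $\max f\leqslant\max g-\kappa\ell/2$, with $\kappa$ the smallest slope magnitude of $g$. That is the mechanism your "absorb the length" idea needs and does not supply.
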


\begin{proof} By Lemma \ref{AI}, (2) is a direct consequence of (1), hence it suffices to prove (1).
\medskip

To this end, recall the notations from the beginning of the proof of Lemma \ref{AI} that for  $s\in\mathbb R,$ $t_{i,s}=t_i+ 3\mathbf is$ for each $i\in\{1,2,3,4\},$ $q_{j,s}=q_j+4\mathbf is$ for each $j\in\{1,2,3\}$ and $q_{4,s}=q_4=2Q.$ Let  $\Gamma_c=\{ u\in \mathbb C\ |\ \mathrm{Re}u =c\},$ which is a vertical line passing the interval $\big(\frac{11Q}{6},2Q\big)\subset\big(\frac{3Q}{2},2Q\big);$ and for each $u\in \Gamma_c$ and $s\in\mathbb R,$ let 
$u_s=u+4\mathbf i s.$ Define
$$F_s(u)\doteq e^{2\pi Qs}\Bigg(\frac{1}{\prod_{i=1}^4\prod_{j=1}^4S_b(q_{j,s}-t_{i,s})}\Bigg)^{\frac{1}{2}} \prod_{i=1}^4S_b(u_s-t_{i,s})\prod_{j=1}^4S_b(q_{j,s}-u_s)$$
so that 
\begin{equation}\label{Fsint}
    e^{2\pi Q s} \bigg\{\begin{matrix} a_1+\mathbf is & a_2 +\mathbf is & a_3+\mathbf is  \\ a_4+\mathbf is & a_5+\mathbf is & a_6+\mathbf is \end{matrix} \bigg\}_b=\int_{\Gamma_c} F_s(u)d\mathrm{Im}u.
\end{equation}

Now by (\ref{=1}) and (\ref{Sbbound}) with the constant $C$ therein and doing the change of variable $x=\mathrm{Im}u-4m,$ we have 
\begin{equation}\label{Fbound}
\begin{split}
|F_s(u)|\leqslant &C^8 e^{2\pi Qs -\pi(2Q-c)\sum_{i=1}^4|\mathrm{Im}(u_s-t_{i,s})|-\pi \big(c-\frac{3Q}{2}\big)\sum_{j=1}^4|\mathrm{Im}(u_s-q_{j,s})|}\\
\leqslant & C^8 e^{2\pi Qs-\pi(2Q-c)\sum_{i=1}^4 \mathrm{Im}(u_s-t_{i,s})-\pi \big(c-\frac{3Q}{2}\big)\sum_{j=1}^3|\mathrm{Im}(u_s-q_{j,s})|-\pi \big(c-\frac{3Q}{2}\big)\mathrm{Im}u_s}\\
=& C^8 e^{\pi(2Q-c)\sum_{i=1}^4\mathrm{Im}t_i-\pi\big(\frac{13Q}{2}-3c\big)\mathrm{Im}u -\pi\big(c-\frac{3Q}{2}\big)\sum_{j=1}^3|\mathrm{Im}(u-q_j)|}\\
=& C^8 e^{-2\pi Qm-\pi\big(\frac{13Q}{2}-3c\big)x -\pi\big(c-\frac{3Q}{2}\big)\sum_{j=1}^3|x+b_j|}.
\end{split}
\end{equation}

Next, let
$$f(x)\doteq -2\pi Qm-\pi\bigg(\frac{13Q}{2}-3c\bigg)x -\pi\bigg(c-\frac{3Q}{2}\bigg)\sum_{j=1}^3\big|x+b_j\big|$$
be the exponent above. 
Then it is a piecewise linear function in $x$ with the set of  singular points $\{-b_1,-b_2,-b_3\},$ and with the slopes of the pieces ordered increasingly in $x$ respectively $-11Q+6c>0,$ $-8Q+4c<0,$ $-5Q+2c<0$ and $-2Q<0.$ Therefore, letting $b_\text{max}=\max\{b_1,b_2,b_3\},$ $f(x)$ achieves the maximum  at $x=\min\{-b_1,-b_2,-b_3\}=-b_\text{max},$  with the value
\begin{equation}\label{fmax}
\begin{split}
    f(-b_\text{max})=&-2\pi Qm+\pi\bigg(\frac{13Q}{2}-3c\bigg)b_\text{max} -\pi\bigg(c-\frac{3Q}{2}\bigg)\sum_{b\in \{b_1,b_2,b_3\}\setminus\{b_\text{max}\}}(b_\text{max}-b)\\
    = & -2\pi Q m + \pi (11Q-6c) b_\text{max},
\end{split}
\end{equation}
where the last equality comes from that $b_1+b_2+b_3 =0.$

Finally, putting (\ref{Fsint}), (\ref{Fbound}) and (\ref{fmax}) together, we have 
\begin{equation*}
\begin{split}
  \Bigg|e^{2\pi Q s} \bigg\{\begin{matrix} a_1+\mathbf is & a_2 +\mathbf is & a_3+\mathbf is  \\ a_4+\mathbf is & a_5+\mathbf is & a_6+\mathbf is \end{matrix} \bigg\}_b\Bigg| \leqslant & \int_{\Gamma_c} |F_s(u)|d\mathrm{Im}u\\
\leqslant  & \int_{\mathbb R} C^8 e^{f(x)}dx =C_{b,c} e^{-2\pi Qm + \pi (11Q-6c)\max\{b_1,b_2,b_3\}}
\end{split}    
\end{equation*}
for a constant  $C_{b,c}$  depending only on $b$ and $c,$ and the equality comes from  the concrete integration  on each of the linear pieces of $f(x).$
\end{proof}

As a consequence of Lemma \ref{bib} (1), we have the following Corollary \ref{cor}, which will be needed in the proof of Proposition \ref{32}.

\begin{corollary}\label{cor}
For a fixed $(a_2,\dots,a_6)\in\big(\frac{Q}{2}+\mathbf i\mathbb R\big)^5$ and $c\in \big(\frac{11Q}{6},2Q\big),$ there is a $D=D_{b,c,a_2,\dots,a_6}>0$ depending only on $b,$ $c$ and $(a_2,\dots,a_6)$ such that:
\begin{enumerate}[(1)]
    \item For all $a\in \frac{Q}{2}+\mathbf i\mathbb R$ with $\mathrm{Im}a\geqslant 0$ and $s>0$ large enough so that $\mathrm{Im}a_k + s>0$ for each $k\in\{2,\dots, 6\}$ and $\mathrm{Im}a + s>0,$
$$\Bigg|e^{2\pi Q s} \bigg\{\begin{matrix} a+\mathbf is & a_2 +\mathbf is & a_3+\mathbf is  \\ a_4+\mathbf is & a_5+\mathbf is & a_6+\mathbf is \end{matrix} \bigg\}_b\Bigg|\leqslant De^{\pi(7Q-4c)\mathrm{Im}a}.$$

\item For all $a\in \frac{Q}{2}+\mathbf i\mathbb R$ with $\mathrm{Im}a\leqslant 0$ and $s>0$ large enough so that $\mathrm{Im}a_k + s>0$ for each $k\in\{2,\dots, 6\}$ and $\mathrm{Im}a + s>0,$
$$\Bigg|e^{2\pi Q s} \bigg\{\begin{matrix} a+\mathbf is & a_2 +\mathbf is & a_3+\mathbf is  \\ a_4+\mathbf is & a_5+\mathbf is & a_6+\mathbf is \end{matrix} \bigg\}_b\Bigg|\leqslant De^{\pi(-4Q+2c)\mathrm{Im}a}.$$
\end{enumerate}
\end{corollary}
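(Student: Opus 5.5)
We apply Lemma \ref{bib} (1) with $a_1=a$ and track how the exponent depends on $y\doteq\mathrm{Im}a$ once $(a_2,\dots,a_6)$ is fixed. In the notation there, $m=\frac{y}{6}+m_0$ with $m_0=\frac16\sum_{k=2}^6\mathrm{Im}a_k$. Writing $\beta_j=\mathrm{Im}a_j+\mathrm{Im}a_{j+3}$, we get
$$b_1=\tfrac{2}{3}y+\beta'_1,\qquad b_2=-\tfrac13 y+\beta'_2,\qquad b_3=-\tfrac13 y+\beta'_3,$$
where $\beta'_1=\mathrm{Im}a_4-2m_0$ and $\beta'_j=\beta_j-2m_0$ for $j=2,3$ are constants depending only on $(a_2,\dots,a_6)$. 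Hence
$$e^{-2\pi Qm+\pi(11Q-6c)\max\{b_1,b_2,b_3\}}$$
is $e^{-2\pi Qm_0}$ times $\exp$ of a piecewise linear function of $y$, and we only need to bound it for $y\geqslant 0$ and for $y\leqslant 0$. Note that $11Q-6c>0$ because $c<2Q$.

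For $y\geqslant 0$, we use $\max\{b_1,b_2,b_3\}\leqslant \frac23 y+B$ with $B=\max_j|\beta'_j|$, since $-\frac13y\leqslant \frac23 y$. The exponent is then at most
$$-\tfrac{\pi Q}{3}y+\tfrac{2\pi}{3}(11Q-6c)y+\text{const}=\pi(7Q-4c)y+\text{const},$$
which gives (1).

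For $y\leqslant 0$, we use $\max\{b_j\}\leqslant -\frac13 y+B$, since now $\frac23 y\leqslant -\frac13 y$. The exponent is at most
$$-\tfrac{\pi Q}{3}y-\tfrac{\pi}{3}(11Q-6c)y+\text{const}=\pi(-4Q+2c)y+\text{const},$$
which gives (2).

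We set $D=C_{b,c}\,e^{-2\pi Qm_0+\pi(11Q-6c)B}$. The requirement that $s$ be large enough in Lemma \ref{bib} (1) is exactly the hypothesis stated here. There is no real obstacle. The only care needed is the sign bookkeeping, namely using $11Q-6c>0$ so that bounding $\max b_j$ from above is legitimate, and checking that the $m$-contribution $-\frac{\pi Q}{3}y$ combines to the stated slopes.
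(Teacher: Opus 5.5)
Your strategy is the same as the paper's: apply Lemma \ref{bib} (1) with $a_1=a$ and track how $m$ and $b_1,b_2,b_3$ depend on $y=\mathrm{Im}a$. Your formulas for $m$, $b_j$ and the final slopes are correct. The key inequality step, however, runs the wrong way.

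Since $c>\frac{11Q}{6}$, we have $11Q-6c<0$. The condition $c<2Q$ only gives $11Q-6c>-Q$. This negativity is also why the first slope $6c-11Q$ in the proof of Lemma \ref{bib} is positive. So your upper bound $\max\{b_1,b_2,b_3\}\leqslant \frac23 y+B$ (resp. $\leqslant -\frac13 y+B$), once multiplied by $\pi(11Q-6c)$, gives a \emph{lower} bound on the exponent $-2\pi Qm+\pi(11Q-6c)\max\{b_1,b_2,b_3\}$. A lower bound is useless for bounding the $6j$ symbol from above. As written, your two ``exponent is at most'' claims do not follow. Ironically, this is exactly the sign bookkeeping you flagged as the only point needing care.

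The repair is to bound the maximum from \emph{below}:
\begin{itemize}
\item For (1), use $\max\{b_1,b_2,b_3\}\geqslant b_1=\frac23 y+\beta_1'$.
\item For (2), use $\max\{b_1,b_2,b_3\}\geqslant b_2=-\frac13 y+\beta_2'$. The paper instead uses $\max\{b_1,b_2,b_3\}\geqslant \frac{b_2+b_3}{2}=-\frac{b_1}{2}$, via $b_1+b_2+b_3=0$, which has the same slope in $y$.
\end{itemize}
Multiplying by the negative number $\pi(11Q-6c)$ now gives genuine upper bounds. They have exactly the slopes $\pi(7Q-4c)$ and $\pi(-4Q+2c)$ you computed. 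The constant must be adjusted accordingly, for instance $D=C_{b,c}\,e^{-2\pi Qm_0+\pi(6c-11Q)B}$, using $\beta_j'\geqslant -B$ and $11Q-6c<0$.

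These lower bounds on the maximum hold for every $y$. The sign condition on $\mathrm{Im}a$ only determines which of the two resulting bounds is the useful one later, in the proof of Proposition \ref{32}.
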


\begin{proof}
From the definition, we have 
$$m=\frac{\mathrm{Im}a}{6}+K_1\quad\text{and}\quad b_1=\frac{2\mathrm{Im}a}{3}+K_2,$$
where $K_1$ and $K_2$ are certain linear combinations of $\mathrm{Im}a_1,\dots,\mathrm{Im}a_5.$

For (1) that $\mathrm{Im}a\geqslant 0,$ as $11Q-6c<0$ and  $\max\{b_1,b_2,b_3\}\geqslant b_1,$ we have
\begin{equation*}
\begin{split}
    -2\pi Qm + \pi &(11Q-6c)\max\{b_1,b_2,b_3\} \\
\leqslant &-2\pi Q\bigg(\frac{\mathrm{Im}a}{6}+K_1\bigg) + \pi (11Q-6c) \bigg(\frac{2\mathrm{Im}a}{3}+K_2\bigg)\\
=& \pi(7Q-4c)\mathrm{Im}a -2\pi Q K_1 +\pi(11Q-6c)K_2,
\end{split}
\end{equation*}
and the result follows from Lemma \ref{bib} with $D=C_{b,c}e^{-2\pi Q K_1 + \pi(11Q-6c)K_2}.$

For (2) that $\mathrm{Im}a\leqslant 0,$ as $b_1+b_2+b_3=0,$ we have $\max\{b_1,b_2,b_3\}\geqslant \frac{b_2+b_3}{2}=-\frac{b_1}{2}.$ As $11Q-6c<0,$ we have
\begin{equation*}
\begin{split}
    -2\pi Qm + \pi &(11Q-6c)\max\{b_1,b_2,b_3\} \\
\leqslant &-2\pi Q\bigg(\frac{\mathrm{Im}a}{6}+K_1\bigg) -\frac{\pi}{2} (11Q-6c) \bigg(\frac{2\mathrm{Im}a}{3}+K_2\bigg)\\
=& \pi(-4Q+2c)\mathrm{Im}a -2\pi Q K_1 - \frac{\pi}{2}(11Q-6c)K_2,
\end{split}
\end{equation*}
and the result follows from Lemma \ref{bib} with $D=C_{b,c}e^{-2\pi QK_1 - \frac{\pi}{2}(11Q-6c)K_2}.$
\end{proof}

\begin{proof}[Proof of Proposition \ref{32}]
For $s\in \mathbb R$ and $k\in \{1,\dots, 9\},$ let $a_{k,s}=a_k+\mathbf is.$ Then by \cite[Proposition 6.1 (1)]{LMSWY}, for each $s\in \mathbb R$ such that $\mathrm{Im}a_{k,s}>0$ for each $k\in\{1,\dots,9\},$
 we have 
\begin{equation}\label{5gon}
\begin{split}
e^{4\pi Qs}\int_{ \frac{Q}{2}+\mathbf{i}\mathbb R_{>0}}|S_b(2a)|^2\bigg\{\begin{matrix} 
    a_{1,s} & a_{2,s} & a_{6,s}\\
      a_{8,s} & a_{7,s} & a 
   \end{matrix}\bigg\}_b
  & \bigg\{\begin{matrix} 
    a_{2,s} & a_{3,s} & a_{4,s}\\
      a_{9,s} & a_{8,s} & a 
   \end{matrix}\bigg\}_b
 \bigg\{\begin{matrix} 
    a_{3,s} & a_{1,s} & a_{5,s}\\
      a_{7,s} & a_{9,s} & a 
   \end{matrix}\bigg\}_b
   d\mathrm{Im}a\\
   =  e^{4\pi Qs} & \bigg\{\begin{matrix} 
    a_{1,s} & a_{2,s} & a_{6,s}\\
      a_{4,s} & a_{5,s} & a_{3,s} 
   \end{matrix}\bigg\}_b
   \bigg\{\begin{matrix} 
   a_{7,s} & a_{8,s} & a_{6,s}\\
      a_{4,s} & a_{5,s} & a_{9,s} 
   \end{matrix}\bigg\}_b.   
\end{split}
\end{equation}
We will show that, as $s\to +\infty,$ the two sides of (\ref{5gon}) respectively converge to the two sides of (\ref{pentagon}), hence (\ref{pentagon}) holds. 
\medskip

For the left hand sides, for each $s\in \mathbb R$ such that $\mathrm{Im}a_{k,s}>0$ for all $k\in\{1,\dots,9\},$ let $\mathcal F_s$ be the function on $\frac{Q}{2}+\mathbf i\mathbb R$ given by
$$\mathcal F_s(a)\doteq \chi_{(-s,+\infty)}(\mathrm{Im}a)e^{4\pi Q s}|S_b(2a_s)|^2
\bigg\{\begin{matrix} 
    a_{1,s} & a_{2,s} & a_{6,s}\\
      a_{8,s} & a_{7,s} & a_s 
   \end{matrix}\bigg\}_b
   \bigg\{\begin{matrix} 
    a_{2,s} & a_{3,s} & a_{4,s}\\
      a_{9,s} & a_{8,s} & a_s 
   \end{matrix}\bigg\}_b
 \bigg\{\begin{matrix} 
    a_{3,s} & a_{1,s} & a_{5,s}\\
      a_{7,s} & a_{9,s} & a_s 
   \end{matrix}\bigg\}_b,$$
   where $a_s=a+\mathbf is$ and $\chi_{(-s,+\infty)}$ is the characteristic function of the interval $(-s,+\infty)$ that takes value $1$ for $x\in (-s,+\infty),$ and takes value $0$ otherwise;
   and let 
   $$\mathcal F(a)\doteq |T_b(2a)|^2\bigg|\begin{matrix} 
    a_1 & a_2 & a_6\\
      a_8 & a_7 & a 
   \end{matrix}\bigg|_b
   \bigg|\begin{matrix} 
    a_2 & a_3 & a_4\\
      a_9 & a_8 & a 
   \end{matrix}\bigg|_b
   \bigg|\begin{matrix} 
    a_3 & a_1 & a_5\\
      a_7 & a_9 & a 
   \end{matrix}\bigg|_b.$$
Then by the change of variable $a\mapsto a_s,$ the left hand side of (\ref{5gon}) equals 
   $$\int_{\frac{Q}{2}+\mathbf i\mathbb R} \mathcal F_s(a) d\mathrm{Im}a;$$
   and the left hand side of (\ref{pentagon}) equals 
      $$\int_{\frac{Q}{2}+\mathbf i\mathbb R} \mathcal F(a) d\mathrm{Im}a.$$
We will show that:
\begin{enumerate}[(1)]
    \item For each $a \in \frac{Q}{2}+\mathbf i\mathbb R,$
    $$\lim_{s\to +\infty}\mathcal F_s(a)=\mathcal F(a).$$

    \item There is an integrable function $\mathcal G$ on $\frac{Q}{2}+\mathbf i \mathbb R$ such that 
    $$|\mathcal F_s(a)|\leqslant \mathcal G(a)$$
    for all $a\in \frac{Q}{2}+\mathbf i \mathbb R.$
\end{enumerate}
Then by the Lebesgue Dominated Convergence Theorem, the left hand side of (\ref{5gon}) converges to the left hand side of (\ref{pentagon}) as $s\to +\infty.$
\smallskip

To show (1), as $T_b(2a_s)=T_b(2a)e^{2\pi Qs},$ we have
\begin{equation}\label{FsB}
    \begin{split}
        \mathcal F_s(a)= &\chi_{(-s,+\infty)}(\mathrm{Im}a)\Bigg|\frac{S_b(2a_s)}{T_b(2a_s)}\Bigg|^2\\
&|T_b(2a)|^2
e^{6\pi Q s} \bigg\{\begin{matrix} 
    a_{1,s} & a_{2,s} & a_{6,s}\\
      a_{8,s} & a_{7,s} & a_s 
   \end{matrix}\bigg\}_b
   \bigg\{\begin{matrix} 
    a_{2,s} & a_{3,s} & a_{4,s}\\
      a_{9,s} & a_{8,s} & a_s 
   \end{matrix}\bigg\}_b
 \bigg\{\begin{matrix} 
    a_{3,s} & a_{1,s} & a_{5,s}\\
      a_{7,s} & a_{9,s} & a_s 
   \end{matrix}\bigg\}_b.
    \end{split}
\end{equation}
Then (1) follows from (\ref{ST}), Lemma \ref{AI} and that 
$$\lim_{s\to +\infty}\chi_{(-s,+\infty)}(\mathrm{Im}a)=1.$$

To show (2), fix a $c\in \big(\frac{23Q}{12},2Q)\subset\big(\frac{11Q}{6},2Q\big),$ and let $C$ and $D$ respectively be the constants in (\ref{Sbbound}) and in Corollary \ref{cor}. We define the function $\mathcal G$ on $\frac{Q}{2}+\mathbf i\mathbb R$ as follows: If $\mathrm{Im}a\geqslant 0,$ then let
$$\mathcal G(a) \doteq CD e^{\pi (23Q-12c)\mathrm{Im}a};$$
and if $\mathrm{Im}a\leqslant 0,$ then let
$$\mathcal G(a) \doteq  CD e^{\pi (-10Q+6c)\mathrm{Im}a}.$$
As $23Q-12c<0$ and $-10Q+6c>0,$ $\mathcal G$ is integrable on $\frac{Q}{2}+\mathbf i\mathbb R;$ and by (\ref{FsB}), (\ref{STC}) and Corollary (\ref{cor}),  $|\mathcal F_s(a)|\leqslant \mathcal G(a)$ for all $a\in\frac{Q}{2}+\mathbf i\mathbb R.$ This completes the proof of (2).
\medskip

On the other hand, as a direct consequence of Lemma \ref{AI}, the right hand side of (\ref{5gon}) converges to the right hand side of (\ref{pentagon}) as $s\to +\infty.$ This completes the proof. 
\end{proof}

As a consequence of Lemma \ref{bib} (2), we have the following Corollary  \ref{thetabj}, which will be needed in the proof of Theorem \ref{converge}.

\begin{corollary}\label{thetabj}
Let $\boldsymbol \theta = (\theta_1,\dots,\theta_6)$ be the dihedral angles of an ideal hyperbolic tetrahedron. Then there exist constants $C_{b,\boldsymbol \theta}>0$ and $\epsilon_{b,\boldsymbol \theta}>0$ depending only on $b$ and $\boldsymbol \theta$ such that for all $\boldsymbol a=(a_1,\dots,a_6)\in \big(\frac{Q}{2}+\mathbf i\mathbb R)^6,$ 
 $$\Bigg|\bigg|\begin{matrix} a_1 & a_2 & a_3  \\ a_4 & a_5 & a_6 \end{matrix} \bigg|_b\Bigg|\leqslant C_{b,\boldsymbol \theta} e^{- Q (\boldsymbol \theta \cdot \mathrm{Im}\boldsymbol a)  - \epsilon_{b,\boldsymbol \theta}\max\{b_1,b_2,b_3\}},$$
 where $\mathrm{Im}\boldsymbol a=(\mathrm{Im}a_1,\dots,\mathrm{Im}a_6),$ and $b_j=\mathrm{Im}a_j+\mathrm{Im}a_{j+3}-\frac{1}{3}\sum_{k=1}^6\mathrm{Im}a_k$  for each $j\in\{1,2,3\}.$ 
\end{corollary}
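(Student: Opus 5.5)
The plan is to obtain this as a direct specialization of Lemma \ref{bib} (2), with the parameter $c\in\big(\frac{11Q}{6},2Q\big)$ chosen close to $2Q$ depending on $\boldsymbol\theta$. I first rewrite the exponent $-Q(\boldsymbol\theta\cdot\mathrm{Im}\boldsymbol a)$ in terms of the quantities $m$ and $b_j$ of Lemma \ref{bib}.

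\textbf{Rewriting the exponent.} Since $\boldsymbol\theta$ are the dihedral angles of an ideal tetrahedron, by (\ref{sum=pi}) and the identifications fixed in Section \ref{idealtetra} we have $\theta_j=\theta_{j+3}$ for $j\in\{1,2,3\}$ and $\theta_1+\theta_2+\theta_3=\pi$. By definition $\mathrm{Im}a_j+\mathrm{Im}a_{j+3}=b_j+2m$. Hence
$$\boldsymbol\theta\cdot\mathrm{Im}\boldsymbol a=\sum_{j=1}^3\theta_j(b_j+2m)=2\pi m+\sum_{j=1}^3\theta_jb_j,$$
and so $-Q(\boldsymbol\theta\cdot\mathrm{Im}\boldsymbol a)=-2\pi Qm-Q\sum_j\theta_jb_j$. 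The $-2\pi Qm$ term matches the one in Lemma \ref{bib} (2). It therefore suffices to find $c$ and $\epsilon>0$ such that
$$\pi(11Q-6c)\,b_{\max}\leqslant -Q\sum_{j=1}^3\theta_jb_j-\epsilon\, b_{\max}$$
for all real $(b_1,b_2,b_3)$ with $b_1+b_2+b_3=0$. Here $b_{\max}=\max\{b_1,b_2,b_3\}$, and $b_{\max}\geqslant 0$ because the $b_j$ sum to zero.

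\textbf{Key inequality.} I claim $\sum_j\theta_jb_j\leqslant\kappa\, b_{\max}$ with $\kappa=\max\{\pi-3\theta_{\min},\ \theta_{\max}-\theta_{\min}\}<\pi$, where $\theta_{\min},\theta_{\max}$ are the smallest and largest of $\theta_1,\theta_2,\theta_3$. The strict inequality $\kappa<\pi$ holds because all $\theta_j\in(0,\pi)$. I split by the sign pattern of the $b_j$.
\begin{itemize}
\item If two of them, say $b_1,b_2$, are nonnegative, then $b_3=-(b_1+b_2)$ and $\sum_j\theta_jb_j=(\theta_1-\theta_3)b_1+(\theta_2-\theta_3)b_2$. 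Bounding each term by its positive part times $b_{\max}$ gives at most $(\theta_1+\theta_2-2\theta_3)b_{\max}=(\pi-3\theta_3)b_{\max}$ or $(\theta_{\max}-\theta_{\min})b_{\max}$.
\item If only one of them, $b_{\max}$, is positive, the other two are nonpositive and sum to $-b_{\max}$. Then $\sum_j\theta_jb_j\leqslant(\theta_{\max}-\theta_{\min})b_{\max}$.
\end{itemize}
This case check is the only real content, and the main point is that the bound is strictly below $\pi\, b_{\max}$. A cruder bound of the form $2(\theta_{\max}-\theta_{\min})b_{\max}$ would fail for degenerate-looking angles, which is why the sign-pattern split is needed.

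\textbf{Choosing $c$ and $\epsilon$.} As $c\to 2Q$ we have $\pi(6c-11Q)\to\pi Q>Q\kappa$. So I fix $c=c_{\boldsymbol\theta}\in\big(\frac{11Q}{6},2Q\big)$ with $\pi(6c-11Q)>Q\kappa$ and set $\epsilon_{b,\boldsymbol\theta}=\pi(6c-11Q)-Q\kappa>0$. Then
$$\pi(11Q-6c)\,b_{\max}=-Q\kappa\, b_{\max}-\epsilon\, b_{\max}\leqslant -Q\sum_j\theta_jb_j-\epsilon\, b_{\max}.$$
Inserting this into Lemma \ref{bib} (2) gives the claim with $C_{b,\boldsymbol\theta}=C_{b,c_{\boldsymbol\theta}}$.
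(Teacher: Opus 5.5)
Your proof is correct and follows the paper's route. You write $\boldsymbol\theta\cdot\mathrm{Im}\boldsymbol a=2\pi m+\sum_j\theta_jb_j$, bound $\sum_j\theta_jb_j\leqslant(\pi-3\theta_{\min})\max\{b_1,b_2,b_3\}$, and apply Lemma \ref{bib} (2) with $c$ close enough to $2Q$; the paper takes $c=\big(2-\frac{\theta_{\min}-\delta}{2\pi}\big)Q$, which gives $\epsilon=3\delta Q$. Your $\kappa$ always equals $\pi-3\theta_{\min}$, so the sign-pattern split is unnecessary: in $\sum_j\theta_jb_j=\sum_j(\theta_j-\theta_{\min})b_j$ the coefficients are nonnegative, and the bound by $(\pi-3\theta_{\min})\max\{b_1,b_2,b_3\}$ follows in one line.
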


\begin{proof}
Let us first recall that as dihedral angles of an ideal hyperbolic tetrahedron, we have $\theta_1=\theta_4,$ $\theta_2=\theta_5,$ $\theta_3=\theta_6$ and $\theta_1+\theta_2+\theta_3=\pi.$ Let $$\theta_\text{min} =\min\{\theta_1,\theta_2,\theta_3\}.$$
As $b_1+b_2+b_3=0,$ we have 
\begin{equation}\label{tbj}
\sum_{j=1}^3\theta_jb_j= \sum_{j=1}^3(\theta_j-\theta_\text{min})b_j \leqslant (\pi -3\theta_\text{min})\max\{b_1,b_2,b_3\}.
\end{equation}

Now, for $\delta>0$ sufficiently small, let $c=\big(2-\frac{\theta_\text{min}-\delta}{2\pi}\big)Q.$ Then $c\in \big(\frac{11Q}{6},2Q\big)$ as $\theta_\text{min}\leqslant \frac{\pi}{3},$  and by  Lemma \ref{bib} (2) with  the constant $C_{b,c}$ therein, we have 
\begin{equation*}
\begin{split}
    \Bigg|\bigg|\begin{matrix} a_1 & a_2 & a_3  \\ a_4 & a_5 & a_6 \end{matrix} \bigg|_b\Bigg|\leqslant &  C_{b,c} e^{-\frac{\pi}{3} Q\sum_{k=1}^6\mathrm{Im}a_k -(\pi-3\theta_\text{min}+3\delta) Q\max\{b_1,b_2,b_3\}}\\
\leqslant &  C_{b,c} e^{-\frac{\pi}{3} Q\sum_{k=1}^6\mathrm{Im}a_k -Q\sum_{j=1}^3\theta_jb_j-3\delta Q\max\{b_1,b_2,b_3\}}\\
= &  C_{b,c} e^{-Q\sum_{k=1}^6\theta_k\mathrm{Im}a_k-3\delta Q\max\{b_1,b_2,b_3\}},
\end{split}    
\end{equation*}
where the second inequality comes from (\ref{tbj}), and the result follows with $C_{b,\boldsymbol \theta}=C_{b,c}$ and $\epsilon_{b,\boldsymbol \theta}=-3\delta Q.$
\end{proof}


\section{Asymptotics  of ideal $\mathrm {U}_{q\tilde q}\mathfrak{sl}(2;\mathbb R)$-$6j$ symbols}

The goal of this section is to prove Theorem \ref{6jasymp}. 
 The main tool in the proof of Theorem \ref{6jasymp} is the following Saddle Point Approximation from \cite[Proposition 6.1]{WY} (see also \cite[Proposition 2.22 and Remark 2.23]{LMSWY}). 

\begin{proposition}\label{saddle}
Let $D$ be a region in $\mathbb C^n$ and let $f(z_1,\dots, z_n)$ and $g(z_1,\dots, z_n)$ be holomorphic functions on $D$ independent of $\hbar$. Let $f_\hbar(z_1,\dots,z_n)$ be a holomorphic function of the form
$$ f_\hbar(z_1,\dots, z_n) = f(z_1,\dots, z_n) + \upsilon_\hbar(z_1,\dots,z_n)\hbar^2.$$
Let $S$ be an embedded real $n$-dimensional closed disk in $D$ and let $(c_1,\dots, c_n)$ be a point on $S.$ If
\begin{enumerate}[(i)]
\item $(c_1, \dots, c_n)$ is a critical point of $f$ in $D,$
\item $\mathrm{Re}(f)(c_1,\dots,c_n) > \mathrm{Re}(f)(z_1,\dots,z_n)$ for all $(z_1,\dots,z_n) \in S\setminus \{(c_1,\dots,c_n)\},$
\item the Hessian matrix $\mathrm{Hess}(f)(c_1,\dots,c_n)$ of $f$ at $(c_1,\dots,c_n)$ is non-singular,
\item $g(c_1,\dots,c_n) \neq 0,$  
\item $|\upsilon_\hbar(z_1,\dots,z_n)|$ is bounded from above by a constant independent of $\hbar$ in $D,$ and
\item $S$ is smoothly embedded around $(c_1,\dots,c_n),$ 
\end{enumerate}
then
\begin{equation*}
\begin{split}
 \int_S g(z_1,\dots, z_n) &e^{\frac{f_\hbar(z_1,\dots,z_n)}{\hbar}} dz_1\dots dz_n\\
 &= \Big(2\pi \hbar\Big)^{\frac{n}{2}}\frac{g(c_1,\dots,c_n)}{\sqrt{\det\big(-\mathrm{Hess}(f)(c_1,\dots,c_n)\big)}} e^{\frac{f(c_1,\dots,c_n)}{\hbar}} \Big( 1 + O \big(\hbar\big)\Big).
 \end{split}
 \end{equation*}
\end{proposition}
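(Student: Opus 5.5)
The plan is the standard multivariable Laplace (steepest descent) argument adapted to a holomorphic phase. There are three steps: localize to a neighbourhood of $(c_1,\dots,c_n)$, deform the contour locally into a steepest descent cycle, and compute the resulting Gaussian integral.

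\emph{Step 1: localization.} First I handle the perturbation. By (v), $e^{f_\hbar/\hbar}=e^{f/\hbar}e^{\upsilon_\hbar \hbar}$ with $|\upsilon_\hbar\hbar|\leqslant C\hbar$. So it suffices to treat $g e^{\upsilon_\hbar\hbar}e^{f/\hbar}$, where the extra factor equals $1+O(\hbar)$ uniformly on $D$. Next fix a small neighbourhood $U$ of $(c_1,\dots,c_n)$ in $D$. Since $S\setminus U$ is compact, (ii) gives a $\delta>0$ with $\mathrm{Re} f\leqslant \mathrm{Re} f(c_1,\dots,c_n)-\delta$ on $S\setminus U$. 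Hence the integral over $S\setminus U$ is $O\big(e^{(\mathrm{Re}f(c_1,\dots,c_n)-\delta)/\hbar}\big)$, which is absorbed in the relative error $O(\hbar)$.

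\emph{Step 2: local normal form and contour deformation.} By (i) and (iii), the holomorphic Morse lemma gives holomorphic coordinates $w=(w_1,\dots,w_n)$ near $(c_1,\dots,c_n)$ with $w(c_1,\dots,c_n)=0$ and
\[
f=f(c_1,\dots,c_n)-\tfrac12\sum_{j} w_j^2 .
\]
In these coordinates the real slice $\mathbb R^n\cap\{|w|<r\}$ is a local steepest descent cycle. Along it, $\mathrm{Re} f$ attains the strict maximum $\mathrm{Re} f(c_1,\dots,c_n)$ at $w=0$ only.

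By (vi), $S\cap U$ is a smooth real $n$-disk through $0$. Condition (ii) says $\mathrm{Re} f$ restricted to $S$ has a strict maximum at $0$, so $S\cap U$ lies in the sublevel set $\{\mathrm{Re}f<\mathrm{Re}f(c_1,\dots,c_n)\}\cup\{0\}$. I will use that the relative homology of $\big(U,\{\mathrm{Re}f<\mathrm{Re}f(c_1,\dots,c_n)-\delta'\}\big)$ is generated by the descent cycle (Lefschetz thimble), and that $S\cap U$ is homologous to $\pm$ this thimble there. The integrand $g\,e^{f/\hbar}\,dz_1\cdots dz_n$ is a closed holomorphic $n$-form. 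So by Stokes, replacing $S\cap U$ by the thimble changes the integral only by boundary terms supported where $\mathrm{Re}f\leqslant \mathrm{Re}f(c_1,\dots,c_n)-\delta'$, which are again exponentially negligible. The sign is fixed by the orientation of $S$ at $0$.

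\emph{Step 3: the Gaussian integral.} On the thimble, the change of variables $z\mapsto w$ has Jacobian $\det(\partial z/\partial w)(0)$, and $\det(\partial z/\partial w)(0)^2=\det(-\mathrm{Hess}(f))^{-1}$. Expanding $g(z(w))\det(\partial z/\partial w)$ to first order in $w$ and using $\int_{\mathbb R^n}e^{-|w|^2/(2\hbar)}dw=(2\pi\hbar)^{n/2}$, I expect the leading term
\[
(2\pi\hbar)^{n/2}\,\frac{g(c_1,\dots,c_n)}{\sqrt{\det(-\mathrm{Hess}f)}}\,e^{f(c_1,\dots,c_n)/\hbar}.
\]
The linear term in $w$ integrates to zero, and the quadratic remainder contributes $O(\hbar)$ relative to the leading term. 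This term is nonzero by (iv), so all the errors above are genuinely relative errors.

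\emph{Main obstacle.} I expect the hardest part to be Step 2. One must justify that $S\cap U$ can be deformed to the thimble while $\mathrm{Re}f$ stays strictly below its critical value away from $0$ throughout the homotopy, using only hypothesis (ii) rather than a negative-definite tangential Hessian. One must also pin down the branch of $\sqrt{\det(-\mathrm{Hess} f)}$ consistently with the orientation of $S$. I would first try the explicit linear homotopy in $w$-coordinates between the tangent plane of $S$ and $\mathbb R^n$. The branch would then be fixed by continuity along this homotopy, which is the convention used in \cite{WY, LMSWY}.
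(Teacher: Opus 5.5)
The paper gives no proof of this proposition; it is quoted from \cite{WY} (see also \cite{LMSWY}). Your outline is the standard argument for such statements: the holomorphic Morse lemma, a local deformation of $S$ onto the real slice of the Morse chart, then a Gaussian integral using $\det(\partial z/\partial w)(0)^2=\det(-\mathrm{Hess}f)^{-1}$. The genuine gap is the one you flag in Step~2. There you assert, without proof, that $S$ near $c$ is homologous to $\pm$ the thimble relative to a strict sublevel set. The fix you propose, interpolating between $T_cS$ and $\mathbb R^n$, can fail. Write $w=u+\mathbf i v$, so that $\mathrm{Re}f-\mathrm{Re}f(c)=-\tfrac12(|u|^2-|v|^2)$. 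Hypothesis (ii) only yields $|v|\leqslant|u|$ on $T_cS$. When the maximum of $\mathrm{Re}f|_S$ is degenerate, the tangent plane can reach the critical level. For example, take $n=1$ and $S=\{v=u-u^3\}$: (ii) holds, but $T_cS=\{v=u\}$ lies entirely in $\{\mathrm{Re}f=\mathrm{Re}f(c)\}$. So after passing to the tangent plane, the boundary terms are no longer exponentially small.

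The missing idea is to deform $S$ itself rather than its tangent plane.
\begin{itemize}
\item Since $|v|\leqslant|u|$ on $T_cS$, we have $T_cS\cap\mathbf i\mathbb R^n=\{0\}$. By (vi) and the inverse function theorem, $S$ near $c$ is therefore a graph $\{v=\phi(u):|u|\leqslant r\}$, and (ii) says precisely that $|\phi(u)|<|u|$ for $u\neq0$.
\item The straight-line homotopy $v=t\phi(u)$, $t\in[0,1]$, stays in $\{\mathrm{Re}f<\mathrm{Re}f(c)\}$ away from $w=0$.
\item Its lateral boundary over $|u|=r$ lies in $\{\mathrm{Re}f\leqslant\mathrm{Re}f(c)-\delta'\}$, where $\delta'=\tfrac12\big(r^2-\max_{|u|=r}|\phi(u)|^2\big)>0$.
\item Stokes' theorem for the closed holomorphic $n$-form then replaces $S$ near $c$ by $\{v=0,\ |u|\leqslant r\}$, up to an exponentially small error.
\item The projection $(u,\phi(u))\mapsto u$ transports the orientation of $S$, and this is what fixes the branch of the square root.
\end{itemize}
For the same reason, keep $e^{\upsilon_\hbar\hbar}=e^{(f_\hbar-f)/\hbar}$, which is holomorphic, in the integrand through the deformation. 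Only invoke $|e^{\upsilon_\hbar\hbar}-1|\leqslant C\hbar$ on the real slice, where $|e^{f/\hbar}|$ is an honest Gaussian. On $S$ itself, $\int_S e^{(\mathrm{Re}f-\mathrm{Re}f(c))/\hbar}$ can be much larger than $\hbar^{n/2}$, and the relative error would then not be $O(\hbar)$.
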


To apply Proposition \ref{saddle}, we will study the propertises of the  relevent functions in Subsection \ref{3.2}. Based on these preparations, we will prove Theorem \ref{6jasymp} in Subsection \ref{3.3}.

\subsection{Properties of relevant functions}\label{3.2}

For $(x_1,\dots,x_6)\in\mathbb R^6,$ let 
$$y_1=\frac{x_1+x_4}{2},\quad y_2=\frac{x_2+x_5}{2}\quad\text{and}\quad y_3=\frac{x_3+x_6}{2};$$
and for each  $k\in\{1,\dots,6\},$ let 
$$a_k = \frac{Q}{2} + \mathbf i \frac{x_k}{2\pi b}.$$ 
Then by the change of variable 
$$\zeta =\pi b(2Q-u)+\mathbf i\sum_{i=1}^3y_i$$
and a direct computation, we have
\begin{equation}\label{idealb6j}
    \bigg|\begin{matrix} a_1 & a_2 & a_3 \\ a_4 & a_5 & a_6 \end{matrix} \bigg|_b= \frac{1}{\pi b}\int_\Gamma \exp\bigg(\frac{V_{\boldsymbol x,b}(\zeta)}{2\pi\mathbf i b^2}\bigg)d\zeta,
\end{equation}
where  $\Gamma$ is any vertical line passing the interval $(0,\frac{\pi}{6}),$ and 
\begin{equation}\label{Vxb}
\begin{split}
    V_{\boldsymbol x,b}(\zeta)=&\sum_{i=1}^3y_i^2-2\sum_{1\leqslant i< j\leqslant 3}y_iy_j-\pi\mathbf i \big(1+b^2\big)\sum_{i=1}^3y_i-\frac{\pi^2}{6}\big(1-3b^2+b^4\big)\\
    & + 3\zeta^2+\bigg(\pi\big(1+b^2)-2\mathbf i \sum_{i=1}^3y_i\bigg)\zeta+2\pi\mathbf i b^2\sum_{i=1}^3\log S_b\bigg(\frac{\zeta-\mathbf i y_i}{\pi b}\bigg).
\end{split}
\end{equation} 

For $x\in\mathbb C$ with $0<\mathrm{Re}x<\pi,$ let $L(x)$ be the function in \cite[Formula (2.7)]{LMSWY} defined by 
\begin{equation}\label{eq:Lx}
L(x)=x^2-\pi x +\frac{\pi^2}{6}-\mathrm{Li}_2\big(e^{2\mathbf ix}\big),
\end{equation}
where $\mathrm{Li}_2: \mathbb C\setminus (1,\infty)\to\mathbb C$ is the dilogarithm function defined by
$$\mathrm{Li}_2(z)=-\int_0^z\frac{\log (1-u)}{u}du$$
with the integral along any path in $\mathbb C\setminus (1,\infty)$ connecting $0$ and $z.$ In the rest of this paper, for $z\in \mathbb C \setminus (-\infty,0),$ the imaginary part of $\log(z)$ takes value in $(-\pi,\pi).$

Let 
\begin{equation}\label{Vx}
\begin{split}
    V_{\boldsymbol x}(\zeta)=-\frac{\pi^2}{6}+&\sum_{i=1}^3y_i^2-2\sum_{1\leqslant i< j\leqslant 3}y_iy_j-\pi\mathbf i \sum_{i=1}^3y_i + 3\zeta^2+\bigg(\pi-2\mathbf i \sum_{i=1}^3y_i\bigg)\zeta+\sum_{i=1}^3L(\zeta-\mathbf i y_i),
\end{split}
\end{equation}
\begin{equation}\label{kappa}
\begin{split}
    \kappa_{\boldsymbol x}(\zeta)= 2\pi^2 -2\pi \zeta  -\pi \mathbf i \sum_{i=1}^3 \log \Big(1-e^{2\mathbf i (\zeta-\mathbf i y_i) }\Big)
\end{split}
\end{equation}
and 
\begin{equation}\label{nu}
\begin{split}
    \nu_{\boldsymbol x,b}(\zeta)=\frac{V_{\boldsymbol x,b}(\zeta)-V_{\boldsymbol x}(\zeta)-\kappa_{\boldsymbol x}(\zeta)b^2}{b^4}.
\end{split}
\end{equation}
Let 
$$D=\Big\{ \zeta\in\mathbb C\ \Big|\ 0<\mathrm{Re}(\zeta)<\frac{\pi}{6} \Big\}.$$

\begin{proposition}\label{critical2}
    If $\boldsymbol x=(x_1,\dots, x_6)\in\mathbb R^6$  are the edge lengths of a decorated ideal hyperbolic tetrahedron $\Delta,$ then the function $V_{\boldsymbol x}(\zeta)$ has a unique critical point $\zeta^*$ in the domain $D$ with 
    \begin{equation}\label{12}
    \mathrm{Re}\zeta^*\leqslant \frac{\pi}{12}   
    \end{equation}
    and with critical value
    $$V_{\boldsymbol x}(\zeta^*)=-2\mathbf i \mathrm{Cov}(\Delta),$$
    where $\mathrm{Cov}$ is the co-volume of $\Delta.$
\end{proposition}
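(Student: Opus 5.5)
To find the critical point, I would differentiate \eqref{Vx} directly. From \eqref{eq:Lx} we get $L'(x)=2x-\pi+2\mathbf i\log\big(1-e^{2\mathbf i x}\big)$, so
$$V_{\boldsymbol x}'(\zeta)=12\zeta-2\pi-4\mathbf i\sum_{i=1}^3y_i+2\mathbf i\sum_{i=1}^3\log\big(1-ze^{2y_i}\big),\qquad z=e^{2\mathbf i\zeta}.$$
Exponentiating the equation $V'_{\boldsymbol x}(\zeta)=0$ (after dividing by $2\mathbf i$) gives the necessary condition $\prod_{i=1}^3(1-zA_i)=-z^3A_1A_2A_3$ with $A_i=e^{2y_i}$. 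The cubic terms cancel, which leaves the quadratic
$$w^2-(A_1+A_2+A_3)w+(A_1A_2+A_1A_3+A_2A_3)=0,\qquad w=1/z.$$
Its discriminant is $\sum A_i^2-2\sum_{i<j}A_iA_j$, which is $-16$ times the squared area of the Euclidean triangle with sides $e^{y_1},e^{y_2},e^{y_3}$ (Heron). These are exactly the sides of the triangle $T$ from Section \ref{idealtetra}. By the triangle inequality \eqref{tri} the discriminant is strictly negative, so the two roots $w_\pm$ are complex conjugate. For $\zeta\in D$ we have $\arg z=2\mathrm{Re}\zeta\in(0,\pi/3)$, so $\arg w\in(-\pi/3,0)$. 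Only $w_-$ can qualify, and this pins down $\zeta^*=-\frac{1}{2\mathbf i}\log w_-$ uniquely modulo $\pi$ in the strip.

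The next steps are as follows.
\begin{enumerate}[(1)]
\item Identify $\arg w_-$ with the geometry of $T$. Since $w_-=\frac{1}{2}\big(\sum A_i-4\mathbf i\,\mathrm{Area}(T)\big)$, we get $\tan(2\mathrm{Re}\zeta^*)=4\,\mathrm{Area}(T)/\sum_i e^{2y_i}$. Rewriting the area as $\frac12 e^{y_j+y_k}\sin\theta$ and using the law of cosines, this equals $\big(\sum_j\cot\theta_j\big)^{-1}$, where $\theta_j$ are the angles of $T$, i.e.\ the dihedral angles of $\Delta$.
\item Derive \eqref{12}. For a triangle, $\sum\cot\theta_j\geqslant\sqrt3$, so $\tan(2\mathrm{Re}\zeta^*)\leqslant 1/\sqrt3$. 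This gives $0<\mathrm{Re}\zeta^*\leqslant\pi/12$, so in particular $\zeta^*\in D$.
\item Verify that $\zeta^*$ solves $V'_{\boldsymbol x}=0$ itself, not just the exponentiated equation. The integer multiple of $2\pi\mathbf i$ must vanish with the principal branches of $\log(1-ze^{2y_i})$. I would do this by continuity: the multiple is an integer-valued continuous function on the connected set of edge-length vectors of ideal tetrahedra, so it suffices to compute it at one convenient point. The regular ideal tetrahedron with all $x_k=0$ gives $z=e^{\pi\mathbf i/6}$ and an explicit check.
\end{enumerate}

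For the critical value I would use a Schl\"afli-type argument. Since $V'_{\boldsymbol x}(\zeta^*)=0$, we have $\frac{d}{dx_k}V_{\boldsymbol x}(\zeta^*)=\frac{\partial V_{\boldsymbol x}}{\partial x_k}(\zeta^*)$. This partial derivative is an explicit combination of $y$'s, $\zeta^*$ and $\log(1-z^*e^{2y_i})$, and I would show it equals $-\mathbf i\theta_k$ by expressing $1-z^*A_i$ through the triangle $T$. Its argument should be a dihedral angle and its modulus a ratio of side lengths, with the real parts cancelling. By \eqref{co-sch} this matches $\frac{\partial}{\partial x_k}\big(-2\mathbf i\,\mathrm{Cov}(\Delta)\big)$. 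It then remains to check equality at the regular tetrahedron, where both sides reduce to values of $\mathrm{Li}_2(e^{\pi\mathbf i/3})$: $\mathrm{Vol}$ of the regular ideal tetrahedron is $3\Lambda(\pi/3)$, and the $\theta x$ terms vanish.

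I expect the main obstacle to be the bookkeeping of branches: both the integer in step (3) and the precise identification of $\mathrm{Arg}(1-z^*A_i)$ with $\theta_i$ (up to sign) in the derivative computation. To handle it, I would write $1-z^*A_i$ explicitly as a complex number built from two sides and the included angle of $T$.
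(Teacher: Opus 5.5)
Your outline matches the paper's proof in all but one step. The quadratic obtained by exponentiating $V'_{\boldsymbol x}=0$, the negative discriminant, the Weitzenb\"ock bound giving $\mathrm{Re}\zeta^*\leqslant\pi/12$, the continuity argument for the branch integer, and co-Schl\"afli plus a normalization at $\boldsymbol x=0$ are all what the paper does.

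The step that differs is $\partial W/\partial x_k$. The paper brings in the second root $\zeta^{**}=-\overline{\zeta^*}$, computes $\partial(W\pm U)/\partial x_1$, and fixes the $2\pi\mathbf i$ ambiguities by evaluating at $\boldsymbol x=0$. You instead compute $\partial V/\partial x_1=\log(1-z^*A_1)-2\mathbf i\zeta^*-y_2-y_3$ directly and use the law of cosines in the form $(1-z^*A_1)/z^*=w_--A_1=e^{y_2+y_3}e^{-\mathbf i\theta_1}$. This works and is cleaner. Since $\mathrm{Im}(1-z^*A_1)<0$ and $2\mathrm{Re}\zeta^*\in(0,\pi/6]$, the principal branch gives exactly $\arg(1-z^*A_1)=2\mathrm{Re}\zeta^*-\theta_1$. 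Hence $\partial V/\partial x_1|_{\zeta^*}=-\mathbf i\theta_1$, with no continuity argument needed. Summing the same identity over $i$ and using $\theta_1+\theta_2+\theta_3=\pi$ even gives $V'_{\boldsymbol x}(\zeta^*)=0$ exactly, so your step (3) could be dropped.

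The gap is at the base point $\boldsymbol x=0$, where both of your computations are off.
\begin{itemize}
\item By your own formula $w_-=\frac{3-\sqrt3\mathbf i}{2}$ there, so $z^*=\frac{3+\sqrt3\mathbf i}{6}=\frac{1}{\sqrt3}e^{\pi\mathbf i/6}$. It is not $e^{\pi\mathbf i/6}$, which is not even a root of your quadratic.
\item Consequently $W(\boldsymbol 0)=6\zeta^{*2}-2\pi\zeta^*+\frac{\pi^2}{3}-3\mathrm{Li}_2(z^*)$ involves the dilogarithm at this non-unimodular point. It does not ``reduce to values of $\mathrm{Li}_2(e^{\pi\mathbf i/3})$''.
\end{itemize}

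You can get part of the way. Since $1-z^*=\overline{z^*}$, the reflection formula $\mathrm{Li}_2(z)+\mathrm{Li}_2(1-z)=\frac{\pi^2}{6}-\log z\log(1-z)$ shows $\mathrm{Re}\,W(\boldsymbol 0)=0$ and $W(\boldsymbol 0)=-3\mathbf i\,\mathrm D(z^*)$.

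To match this with $-2\mathbf i\,\mathrm{Cov}(\boldsymbol 0)=-2\mathbf i\,\mathrm D(e^{\pi\mathbf i/3})$ you still need the nontrivial identity $3\mathrm D\big(\frac{3+\sqrt3\mathbf i}{6}\big)=2\mathrm D(e^{\pi\mathbf i/3})$. This is the missing ingredient. The paper supplies it by a geometric 3--2 Pachner move: $z^*$ is the shape with dihedral angles $(\frac{\pi}{6},\frac{\pi}{6},\frac{2\pi}{3})$, and three such tetrahedra around an edge form the same bipyramid as two regular ideal tetrahedra. Equivalently, it follows from $2\Lambda(\frac{\pi}{6})=3\Lambda(\frac{\pi}{3})$, via the duplication formula for the Lobachevsky function. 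With this identity inserted, your argument is complete.
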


\begin{proof} We first show that $V_{\boldsymbol x}$ has a unique  critical point in $D.$  For $i\in\{1,2,3\},$ let $u_i=e^{2y_i},$ and let $z=e^{2\mathbf i\zeta}.$ Then by a direct computation, we have
\begin{equation}\label{eq:mod4}
\frac{d V_{\boldsymbol x}}{d \zeta}=2\mathbf i\log\frac{(1-zu_1)(1-zu_2)(1-zu_3)}{-z^3u_1u_2u_3}\quad\quad (\mathrm{mod}\ 4\pi).
\end{equation} 
Here the $\mathrm{mod}\ 4\pi$ ubiquity  comes from the  $2\mathbf i\log.$ Then, as a necessary condition, $\frac{d V_{\boldsymbol x}}{d \zeta}=0$ implies that 
$\frac{(1-zu_1)(1-zu_2)(1-zu_3)}{-z^3u_1u_2u_3}=1,$ which is equivalent to the following quadratic equation 
$$Az^2+Bz+C=0$$
with $A=u_1u_2+u_1u_3+u_2u_3,$ $B=-(u_1+u_2+u_3),$ and $C=1.$
By a direct computation and (\ref{det-}), we have 
\begin{equation}\label{detGram}
    B^2-4AC = 16\det\mathrm{Gram}(\boldsymbol x)<0.
\end{equation}
Let 
\begin{equation}\label{xi}
z^*=e^{2\mathbf i\zeta^*}=\frac{-B+\sqrt{B^2-4AC}}{2A}\quad\text{and}\quad {z^{**}}=e^{2\mathbf i{\zeta^{**}}}=\frac{-B-\sqrt{B^2-4AC}}{2A}
\end{equation}
be the two roots of this quadratic equation. Then by~\eqref{eq:mod4} we have 
\begin{equation}\label{kk'}
    \frac{d V_{\boldsymbol x}}{d \zeta}\Big|_{\zeta=\zeta^*}=4k\pi\quad\text{and}\quad\frac{d V_{\boldsymbol x}}{d \zeta}\Big|_{\zeta={\zeta^{**}}}=4k'\pi
\end{equation}
for some integers $k$ and $k'.$ We claim that, both $\zeta^*$ and $\zeta^{**}$ are  critical points of $V_{\boldsymbol x},$ among the two only $\zeta^*$ is in $D.$   Indeed, at the special case $x_1=\dots=x_6=0,$ ie., $u_1=u_2=u_3=1,$ we have 
$A=3$ and $B=-3.$ Then $z^*=\frac{3+\sqrt{3}\mathbf i}{6}$ and ${z^{**}}=\frac{3-\sqrt{3}\mathbf i}{6}.$ Hence we can choose $\zeta^*=\frac{1}{2\mathbf i}\log \frac{3+\sqrt{3}\mathbf i}{6}\in D;$ and  there is no ${\zeta^{**}}\in D$ making $e^{2\mathbf i{\zeta^{**}}}=\frac{3-\sqrt{3}\mathbf i}{6},$ as $\mathrm{Im}e^{2\mathbf i\zeta}>0$ for all $\zeta\in D.$ A direct computation also shows that at this special case $k=0,$ i.e.,
$$\frac{d V_{\boldsymbol x}}{d\zeta}\Big|_{\zeta=\frac{1}{2\mathbf i}\log \frac{3+\sqrt{3}\mathbf i}{6}}=0.$$ 
Let $\zeta^{**}=\frac{1}{2\mathbf i}\log \frac{3-\sqrt{3}\mathbf i}{6}.$ Then a direct computation shows that  in this special cases $k'=0$ also, i.e.,
$$\frac{d V_{\boldsymbol x}}{d\zeta}\Big|_{\zeta=\frac{1}{2\mathbf i}\log \frac{3-\sqrt{3}\mathbf i}{6}}=0.$$ 
Now for a general $\boldsymbol x,$  we have $A>0,$ $B<0$ and $B^2-4AC<0;$ and by a direct computation, we have by Weitzenb\"ock's inequality that  $\sqrt{4AC-B^2}=\sqrt{-u_1^2-u_2^2-u_3^2+2u_1u_2+2u_1u_3+2u_2u_3}\leqslant \frac{1}{\sqrt{3}}(u_1+u_2+u_3)=\frac{-B}{\sqrt 3}.$ As a consequence, we have $0<\mathrm{Im}z^*\leqslant \frac{\mathrm{Re}z^*}{\sqrt{3}}.$ Hence $\arg z^* \in (0,\frac{\pi}{6}],$ and  we can choose a unique $\zeta^*$ in $D$ with $\mathrm{Re}\zeta^*\leqslant \frac{\pi}{12}.$  For this choice of $\zeta^*,$ we let $\zeta^{**}=-\overline{\zeta^*}$ so that $e^{2\mathbf i\zeta^{**}}=z^{**}$ and $\mathrm{Re}\zeta^{**}\in (-\frac{\pi}{6},0).$
Since $k$ and $k'$ are integer valued continuous functions on the connected space of the edge lengths of decorated ideal hyperbolic tetrahedra, they are constantly $0,$ ie., 
\begin{equation}\label{=0}
\frac{d V_{\boldsymbol x}}{d \zeta}\Big|_{\zeta=\zeta^*}=\frac{d V_{\boldsymbol x}}{d \zeta}\Big|_{\zeta=\zeta^{**}}=0,
\end{equation}
and $\zeta^*$ is the unique critical point of $V_{\boldsymbol x}$ in  $D.$
\bigskip

Next, we compute the value of  $V_{\boldsymbol x}(\zeta^*).$ To this end, for $\boldsymbol x=(x_1,\dots,x_6)\in \mathbb R^6$ and $\zeta\in D,$ we let 
$V(\boldsymbol x,\zeta)\doteq V_{\boldsymbol x}(\zeta),$ and define 
\begin{equation}\label{W}
W(\boldsymbol x)=V({\boldsymbol x},\zeta^*).
\end{equation}

We first verify that 
\begin{equation}\label{co-sch2}
    \frac{\partial W(\boldsymbol x)}{\partial x_k}=-\mathbf i\theta_k
\end{equation}
for each $k\in\{1,\dots,6\};$ and by the symmetry of the indices, it suffices to consider the case that $k=1.$ 
We claim that 
\begin{equation}\label{pW}
\frac{\partial W(\boldsymbol x)}{\partial x_1}=\frac{1}{2}\log\frac{G_{34}-\sqrt{G_{34}^2-G_{33}G_{44}}}{G_{34}+\sqrt{G_{34}^2-G_{33}G_{44}}},
\end{equation}
where $G_{ij}$ is the $ij$-th cofactor of the Gram matrix $\mathrm{Gram}(\Delta).$ Then by (\ref{cos}), we have 
$$\frac{\partial W(\boldsymbol x)}{\partial x_1}=\frac{1}{2}\log e^{-2\mathbf i\theta_1}=-\mathbf i\theta_1.$$
To prove (\ref{pW}), we let $\zeta^{**}=-\overline {\zeta^*}$ so that $z^{**}=e^{2\mathbf i\zeta^{**}},$ and we define 
$$U(\boldsymbol x)= V(\boldsymbol x,\zeta^{**}).$$
We  will prove that 
\begin{enumerate}[(1)]
    \item  
    $$\frac{\partial (W(\boldsymbol x)-U(\boldsymbol x))}{\partial x_1}=\log\frac{G_{34}-\sqrt{G_{34}^2-G_{33}G_{44}}}{G_{34}+\sqrt{G_{34}^2-G_{33}G_{44}}},$$
    and 

    \item   $$\frac{\partial (W(\boldsymbol x)+U(\boldsymbol x))}{\partial x_1}=0,$$
\end{enumerate}
from which (\ref{pW}) follows immediately. To see (1) and (2), considering both $\zeta^*$ and $\zeta^{**}$ as functions of $\boldsymbol x,$ and by the Chain Rule and  (\ref{=0}), we have
$$\frac{\partial W(\boldsymbol x)}{\partial x_1}=\frac{\partial V}{\partial x_1}\bigg|_{\zeta=\zeta^*}+\frac{\partial V}{\partial \zeta}\bigg|_{\zeta=\zeta^*}\cdot\frac{\partial \zeta^*}{\partial x_1}=\frac{\partial V}{\partial x_1}\bigg|_{\zeta=\zeta^*}+\frac{d V_{\boldsymbol x}}{d \zeta}\bigg|_{\zeta=\zeta^*}\cdot\frac{\partial \zeta^*}{\partial x_1}=\frac{\partial V}{\partial x_1}\bigg|_{\zeta=\zeta^*}$$
and
$$\frac{\partial U(\boldsymbol x)}{\partial x_1}=\frac{\partial V}{\partial x_1}\bigg|_{\zeta=\zeta^{**}}+\frac{\partial V}{\partial \zeta}\bigg|_{\zeta=\zeta^{**}}\cdot \frac{\partial \zeta^{**}}{\partial x_1}=\frac{\partial V}{\partial x_1}\bigg|_{\zeta=\zeta^{**}}+\frac{d V_{\boldsymbol x}}{d \zeta}\bigg|_{\zeta=\zeta^{**}}\cdot\frac{\partial \zeta^{**}}{\partial x_1}=\frac{\partial V}{\partial x_1}\bigg|_{\zeta=\zeta^{**}}.$$
Then we have 
\begin{equation*}
\begin{split}
\frac{\partial (W(\boldsymbol x)-U(\boldsymbol x))}{\partial x_1} = & \frac{\partial V}{\partial x_1}\bigg|_{\zeta=\zeta^*}-\frac{\partial V}{\partial x_1}\bigg|_{\zeta=\zeta^{**}}\\ 
=& \log \frac{z^{**}(1-z^*u_1)}{z^*(1-z^{**}u_1)}\quad\quad (\mathrm{mod}\ 2\pi\mathbf i)\\
=& \log \frac{-u_1+u_2+u_3 - \sqrt{u_1^2+u_2^2+u_3^2-2u_1u_2-2u_1u_3-2u_2u_3}}{-u_1+u_2+u_3 +\sqrt{u_1^2+u_2^2+u_3^2-2u_1u_2-2u_1u_3-2u_2u_3}}\\
=&  \log\frac{G_{34}-\sqrt{G_{34}^2-G_{33}G_{44}}}{G_{34}+\sqrt{G_{34}^2-G_{33}G_{44}}} \quad\quad (\mathrm{mod}\ 2\pi\mathbf i),
\end{split}
\end{equation*}
and 
\begin{equation*}
\begin{split}
\frac{\partial (W(\boldsymbol x)+U(\boldsymbol x))}{\partial x_1} = & \frac{\partial V}{\partial x_1}\bigg|_{\zeta=\zeta^*}+\frac{\partial V}{\partial x_1}\bigg|_{\zeta=\zeta^{**}}\\ 
=& \log \frac{(1-z^*u_1)(1-z^{**}u_1)}{z^*z^{**}u_2u_3}\quad\quad (\mathrm{mod}\ 2\pi\mathbf i)\\
=&\, 0 \quad\quad (\mathrm{mod}\ 2\pi\mathbf i),
\end{split}
\end{equation*}
where the last equality comes from that 
$z^*+z^{**}=-\frac{B}{A}=\frac{u_1+u_2+u_3}{u_1u_2+u_1u_3+u_2u_3}$ and $z^*z^{**}=\frac{C}{A}=\frac{1}{u_1u_2+u_1u_3+u_2u_3}.$ To take care of the  $\mathrm{mod}\ 2\pi\mathbf i$ ambiguity, a direct computation at $\boldsymbol x=(0,\dots,0)$ shows that 
$$\frac{\partial (W(\boldsymbol x)-U(\boldsymbol x))}{\partial x_1} =\log\frac{G_{34}-\sqrt{G_{34}^2-G_{33}G_{44}}}{G_{34}+\sqrt{G_{34}^2-G_{33}G_{44}}}=-\frac{2\pi \mathbf i}{3},$$
and 
$$\frac{\partial (W(\boldsymbol x)+U(\boldsymbol x))}{\partial x_1} = 0.$$
This completes the proof of (1) and (2).

Now, by (\ref{co-sch}) and (\ref{co-sch2}), we have $W(\boldsymbol x)=-2\mathbf i \mathrm{Cov}(\Delta)+K$ for some constant $K.$ By a direct computation, 
$$W(0,\dots,0)= -3\mathbf i\mathrm {D}\bigg(\frac{3+\sqrt{3}\mathbf i}{6}\bigg)=-2\mathbf i\mathrm D\bigg(\frac{1+\sqrt{3}\mathbf i}{2}\bigg)=-2\mathbf i\mathrm{Cov}(0,\dots,0),$$
where $\mathrm D$ is the Bloch-Wigner dilogarithm function defined by $\mathrm D(z)=\mathrm{Im}\mathrm{Li}_2(z)+\arg(1-z)\log|z|$ that computes the volume of the ideal hyperbolic tetrahedron of shape parameter $z,$ and the second equality comes from that $\frac{3+\sqrt{3}\mathbf i}{6}$ is the shape parameter of the ideal hyperbolic tetrahedron with dihedral angles $\big(\frac{\pi}{6},\frac{\pi}{6},\frac{2\pi}{3}\big),$ $\frac{1+\sqrt{3}\mathbf i}{2}$ is the shape parameter of the regular ideal hyperbolic tetrahedron, and the two sides differ by a geometric $3$-$2$ Pachner Move. This shows that the constant $K=0,$ and hence 
$$V_{\boldsymbol x}(\zeta^*)=W(\boldsymbol x)=-2\mathbf i \mathrm{Cov}(\Delta).$$   
\end{proof}

\begin{proposition}\label{Hess} Let $\boldsymbol x=(x_1,\dots,x_6)\in\mathbb R^6$  be the edge lengths of a decorated ideal hyperholic tetrahedron, and let $\zeta^*$ be the critical point of $V_{\boldsymbol x}$ in $D.$ 
  Then 
\begin{equation*}
\frac{-V''_{\boldsymbol x}(\zeta^*)}{\exp\big(\frac{\kappa_{\boldsymbol x}(\zeta^*)}{\pi \mathbf i}\big)}=16\sqrt{\det\mathrm{Gram}(\Delta)},
\end{equation*}
where $\mathrm{Gram}(\Delta)$ is the Gram matrix of $\Delta$ in the edge lengths.
\end{proposition}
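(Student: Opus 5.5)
The plan is to reduce the identity to an explicit algebraic computation in $z=e^{2\mathbf i\zeta}$ and $u_i=e^{2y_i}$, using the quadratic relation $Az^2+Bz+C=0$ from the proof of Proposition \ref{critical2}. Only at the end will a continuity argument fix the branch of the square root.

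\emph{Step 1: the numerator.} First I will compute $V''_{\boldsymbol x}(\zeta)$. Differentiating (\ref{eq:mod4}) removes the $\mathrm{mod}\ 4\pi$ ambiguity. Since $\frac{dz}{d\zeta}=2\mathbf i z$, this gives
$$V''_{\boldsymbol x}(\zeta)=2\mathbf i\cdot 2\mathbf i z\,\frac{d}{dz}\Big(\sum_{i=1}^3\log(1-zu_i)-3\log z\Big)=4\Big(3+\sum_{i=1}^3\frac{zu_i}{1-zu_i}\Big).$$
This can be double-checked directly from (\ref{Vx}) and (\ref{eq:Lx}), using $L'(x)=2x-\pi+2\mathbf i\log(1-e^{2\mathbf i x})$.

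\emph{Step 2: the denominator.} From (\ref{kappa}),
$$\frac{\kappa_{\boldsymbol x}(\zeta)}{\pi\mathbf i}=-2\pi\mathbf i+2\mathbf i\zeta-\sum_{i=1}^3\log(1-zu_i),$$
so $\exp\big(\frac{\kappa_{\boldsymbol x}(\zeta)}{\pi\mathbf i}\big)=\frac{z}{\prod_i(1-zu_i)}$. This is single-valued, so no branch issue arises here. At the critical point we have $\prod_i(1-z^*u_i)=-z^{*3}u_1u_2u_3$ by the proof of Proposition \ref{critical2}. Hence $\exp\big(\frac{\kappa_{\boldsymbol x}(\zeta^*)}{\pi\mathbf i}\big)=-\frac{1}{z^{*2}u_1u_2u_3}$.

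\emph{Step 3: simplify the ratio.} Combining the two steps, the quantity in the statement equals
$$4z^{*2}u_1u_2u_3\Big(3+\sum_i\frac{z^*u_i}{1-z^*u_i}\Big).$$
I will put the sum over the common denominator $\prod_i(1-z^*u_i)=-z^{*3}e_3$. Here $e_1=-B$, $e_2=A$, $e_3=u_1u_2u_3$ are the elementary symmetric functions, and the numerator is $z^*\sum_i u_i\prod_{j\neq i}(1-z^*u_j)=z^*(e_1-2e_2z^*+3e_3z^{*2})$. The expression then becomes a Laurent polynomial in $z^*$. I will reduce it using $Az^{*2}+Bz^*+1=0$, which is equivalent to the critical point equation, in order to reach the linear expression $\pm 4(2Az^*+B)$. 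Since $z^*=\frac{-B+\sqrt{B^2-4AC}}{2A}$, we have $2Az^*+B=\sqrt{B^2-4AC}$. By (\ref{detGram}) this equals $4\sqrt{\det\mathrm{Gram}(\Delta)}$, so $4(2Az^*+B)=16\sqrt{\det\mathrm{Gram}(\Delta)}$, as required.

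\emph{Step 4: signs and branches.} This bookkeeping is the main obstacle. The overall sign, and the choice of branch of $\sqrt{\det\mathrm{Gram}(\Delta)}$ (a purely imaginary number here), must match the root $z^*$ rather than $z^{**}$. My first move is to evaluate both sides at the special point $x_1=\dots=x_6=0$. There $A=3$, $B=-3$ and $z^*=\frac{3+\sqrt3\mathbf i}{6}$, and both sides are explicit numbers. Both sides are continuous and nonvanishing on the connected space of edge lengths of decorated ideal hyperbolic tetrahedra, and their squares agree identically by Step 3. Hence agreement at this one point gives agreement everywhere, which settles the sign convention for $\sqrt{\det\mathrm{Gram}(\Delta)}$ used in the statement.
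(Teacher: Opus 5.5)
Your proposal is correct and follows essentially the paper's route: both reduce the ratio to an algebraic expression in $z^*=e^{2\mathbf i\zeta^*}$ and $u_i=e^{2y_i}$ and use $Az^{*2}+Bz^*+1=0$ to arrive at $4(2Az^*+B)=4A(z^*-z^{**})=4\sqrt{B^2-4AC}=16\sqrt{\det\mathrm{Gram}(\Delta)}$. The paper passes through $-4(Az^*+2B+3C/z^*)$ instead of first simplifying $\exp(\kappa_{\boldsymbol x}(\zeta^*)/\pi\mathbf i)$, which is only a reordering of the same computation. Your Step 3 in fact yields exactly $+4(2Az^*+B)$, with no sign ambiguity. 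The branch is already fixed by (\ref{xi}), since $A>0$ and $\mathrm{Im}z^*>0$ force $\mathrm{Im}(2Az^*+B)>0$, so the continuity argument in Step 4 is valid but not needed.
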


\begin{proof}
By a direct computation, we have
\begin{equation*}
\begin{split}
\frac{-V''_{\boldsymbol x}(\zeta^*)}{\exp\big({\frac{\kappa_{\boldsymbol x}(\zeta^*)}{\pi \mathbf i}}\big)}=-4\bigg(\frac{1}{1-z^*u_1}+\frac{1}{1-z^*u_2}+\frac{1}{1-z^*u_3}\bigg)\frac{(1-z^*u_1)(1-z^*u_2)(1-z^*u_3)}{z^*}.
\end{split}
\end{equation*}
By a direct computation, this equals
$$-4\bigg(Az^*+2B+\frac{3C}{z^*}\bigg)=4\bigg(Az^*-\frac{C}{z^*}\bigg)$$
with $A=u_1u_2+u_1u_3+u_2u_3,$ $B=-(u_1+u_2+u_3),$ and $C=1,$ which in turn equals
$$4A(z^*-{z^{**}})=4\sqrt{B^2-4AC}=16\sqrt{\det\mathrm{Gram}(\Delta)},$$
where ${z^{**}}$ is the other root of the quadratic equation as in (\ref{xi}). 
\end{proof}

\begin{proposition}\label{concave}
In the domain $D,$ the function $\mathrm{Im}V_{\boldsymbol x}(\zeta)$ is strictly concave down in $\mathrm{Im}\zeta.$
\end{proposition}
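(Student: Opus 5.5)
The plan is to reduce the concavity statement to a sign condition on the second complex derivative of $V_{\boldsymbol x}$, using holomorphy. Write $\zeta=\alpha+\mathbf i\beta$ with $\alpha\in(0,\frac{\pi}{6})$ fixed. Since $V_{\boldsymbol x}$ is holomorphic on $D$, we have $\frac{\partial^2}{\partial\beta^2}V_{\boldsymbol x}(\alpha+\mathbf i\beta)=\mathbf i^2V''_{\boldsymbol x}(\zeta)=-V''_{\boldsymbol x}(\zeta)$. Taking imaginary parts gives
$$\frac{\partial^2}{\partial \beta^2}\mathrm{Im}V_{\boldsymbol x}(\alpha+\mathbf i\beta)=-\mathrm{Im}V''_{\boldsymbol x}(\zeta).$$
So it suffices to show $\mathrm{Im}V''_{\boldsymbol x}(\zeta)>0$ for every $\zeta\in D$.

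Next we compute $V''_{\boldsymbol x}$ from (\ref{Vx}). The terms that do not involve $L$ contribute $(3\zeta^2)''=6$. For $L$ in (\ref{eq:Lx}), the formula $\mathrm{Li}_2'(w)=-\log(1-w)/w$ gives
$$L'(x)=2x-\pi+2\mathbf i\log\big(1-e^{2\mathbf ix}\big).$$
Differentiating once more,
$$L''(x)=2+\frac{4e^{2\mathbf ix}}{1-e^{2\mathbf ix}}.$$
Setting $w_i=e^{2\mathbf i(\zeta-\mathbf iy_i)}=zu_i$ with $z=e^{2\mathbf i\zeta}$ and $u_i=e^{2y_i}$ as in Proposition \ref{critical2}, we obtain
$$V''_{\boldsymbol x}(\zeta)=12+4\sum_{i=1}^3\frac{w_i}{1-w_i}=12+4\sum_{i=1}^3\Big(-1+\frac{1}{1-w_i}\Big).$$
As a consistency check, this agrees with differentiating (\ref{eq:mod4}). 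The derivative is independent of the branch of $\log$, so the $\mathrm{mod}\ 4\pi$ ambiguity in (\ref{eq:mod4}) plays no role.

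Finally we take imaginary parts. Since $\mathrm{Im}\frac{1}{1-w}=\frac{\mathrm{Im}\,w}{|1-w|^2}$, we get
$$\mathrm{Im}V''_{\boldsymbol x}(\zeta)=4\sum_{i=1}^3\frac{\mathrm{Im}\,w_i}{|1-w_i|^2}.$$
Now $w_i=u_ie^{-2\beta}e^{2\mathbf i\alpha}$ with $u_ie^{-2\beta}>0$ and $2\alpha\in(0,\frac{\pi}{3})$. Hence $\mathrm{Im}\,w_i>0$, and $w_i\neq 1$ so each denominator is nonzero. Therefore $\mathrm{Im}V''_{\boldsymbol x}>0$ on all of $D$, which by the first paragraph gives strict concavity of $\mathrm{Im}V_{\boldsymbol x}$ in $\mathrm{Im}\zeta$.

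The only step needing care is the differentiation of $L$ and the bookkeeping of the factor $2\mathbf i$; once $V''$ is written in the above form, the positivity follows from $\arg w_i=2\mathrm{Re}\zeta\in(0,\frac{\pi}{3})$. This is exactly where the restriction to $D$ is used.
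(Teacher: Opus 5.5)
Your proof is correct and is the paper's argument. The paper simply asserts, by direct computation, that $\partial^2\mathrm{Im}V_{\boldsymbol x}/\partial(\mathrm{Im}\zeta)^2=-\sum_{i=1}^3\sin\big(2\mathrm{Re}(\zeta-\mathbf iy_i)\big)/\big(\cosh^2\mathrm{Im}(\zeta-\mathbf iy_i)-\cos^2\mathrm{Re}(\zeta-\mathbf iy_i)\big)$. This is exactly your $-4\sum_i\mathrm{Im}\,w_i/|1-w_i|^2$ rewritten, and the paper concludes from $\sin(2\mathrm{Re}\zeta)>0$ on $D$ just as you do from $\mathrm{Im}\,w_i>0$; your write-up supplies the computation the paper omits.
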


\begin{proof}  By a direct computation, we have
\begin{equation}\label{2nd}
\frac{\partial ^2 \mathrm{Im}V_{\boldsymbol x}(\zeta)}{\partial \mathrm{Im}\zeta^2}
=-\sum_{i=1}^3\frac{\sin\big(2\mathrm{Re}(\zeta-\mathbf iy_i)\big)}{\cosh^2\mathrm{Im}(\zeta-\mathbf iy_i)-\cos^2\mathrm{Re}(\zeta-\mathbf iy_i)}.
\end{equation}
For  $\zeta\in D,$ we have $\mathrm{Re}(\zeta-\mathbf iy_i) \in (0,\frac{\pi}{6})$ for $i\in\{1,2,3\},$ hence $\sin\big(2\mathrm{Re}(\zeta-\mathbf iy_i)\big)>0.$ Then by (\ref{2nd}),  we have
\begin{equation*}
\frac{\partial ^2 \mathrm{Im}V_{\boldsymbol x}(\zeta)}{\partial \mathrm{Im}\zeta^2}<0,\\
\end{equation*}
and $\mathrm{Im}V_{\boldsymbol x}$ is strictly concave down in $\mathrm{Im}\zeta.$
\end{proof}

\begin{proposition} \label{limder}
Let $c\in(0,\frac{\pi}{6}).$ Then on $\Gamma_c=\Big\{\zeta\in D\ \Big|\ \mathrm{Re}\zeta =c  \Big\},$ we have 
$$\lim_{\mathrm{Im}\zeta\to +\infty}\frac{\partial \mathrm{Im}V_{\boldsymbol x}(\zeta)}{\partial \mathrm{Im}\zeta} = 12c-2\pi \quad\text{and}\quad \lim_{\mathrm{Im}\zeta\to -\infty}\frac{\partial \mathrm{Im}V_{\boldsymbol x}(\zeta)}{\partial \mathrm{Im}\zeta} = 4\pi.$$
\end{proposition}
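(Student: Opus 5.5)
The plan is to reduce the statement to an explicit formula for $\mathrm{Re}\,V_{\boldsymbol x}'(\zeta)$ on $\Gamma_c$, and then take the two limits term by term. Write $\zeta=c+\mathbf i t$ with $t\in\mathbb R$. Since $V_{\boldsymbol x}$ is holomorphic in $D$, we have $\frac{\partial}{\partial t}V_{\boldsymbol x}(c+\mathbf i t)=\mathbf i V'_{\boldsymbol x}(\zeta)$. Therefore
$$\frac{\partial\, \mathrm{Im}V_{\boldsymbol x}(\zeta)}{\partial\, \mathrm{Im}\zeta}=\mathrm{Re}\,V'_{\boldsymbol x}(\zeta).$$

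First I compute $V'_{\boldsymbol x}$ from (\ref{Vx}) and (\ref{eq:Lx}). Using $\frac{d}{dx}\mathrm{Li}_2(e^{2\mathbf i x})=-2\mathbf i\log(1-e^{2\mathbf i x})$, one gets $L'(x)=2x-\pi+2\mathbf i\log(1-e^{2\mathbf i x})$. Summing this over the three terms $L(\zeta-\mathbf i y_i)$ and adding the derivative of the quadratic part, $6\zeta+\pi-2\mathbf i\sum_i y_i$, gives
$$V'_{\boldsymbol x}(\zeta)=12\zeta-2\pi-4\mathbf i\sum_{i=1}^3 y_i+2\mathbf i\sum_{i=1}^3\log\big(1-e^{2\mathbf i(\zeta-\mathbf i y_i)}\big).$$
Taking real parts on $\Gamma_c$, and using that the $y_i$ are real, yields
$$\mathrm{Re}\,V'_{\boldsymbol x}(\zeta)=12c-2\pi-2\sum_{i=1}^3\arg\big(1-w_i\big),\qquad w_i=e^{2\mathbf i c}e^{2(y_i-t)}.$$

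The only delicate point is the branch of $\log$, which is the principal one with imaginary part in $(-\pi,\pi)$. Since $2c\in(0,\frac{\pi}{3})$, we have $\mathrm{Im}\,w_i=e^{2(y_i-t)}\sin 2c>0$. Hence $1-w_i$ lies in the open lower half-plane for every $t$, so $\arg(1-w_i)\in(-\pi,0)$ varies continuously in $t$ and never meets the branch cut.

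Now take the limits.
\begin{enumerate}[(1)]
\item As $t\to+\infty$, each $w_i\to 0$, so $\arg(1-w_i)\to 0$ and the limit is $12c-2\pi$.
\item As $t\to-\infty$, each $|w_i|\to\infty$ with fixed argument $2c$, so $1-w_i=-w_i(1-w_i^{-1})$ and $\arg(1-w_i)\to\arg(-e^{2\mathbf i c})$. The argument $\arg(-e^{2\mathbf i c})$ equals $2c-\pi$, which is the correct value in $(-\pi,0)$ by the half-plane observation above. The limit is therefore $12c-2\pi-6(2c-\pi)=4\pi$.
\end{enumerate}

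I expect no real obstacle. The only care needed is this branch bookkeeping, that is, making sure the limiting argument is $2c-\pi$ rather than $2c+\pi$, and the lower half-plane observation settles it.
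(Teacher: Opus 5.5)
Your proposal is correct and follows essentially the same route as the paper, which computes $\frac{\partial \mathrm{Im}L(x+\mathbf il)}{\partial l}=2x-\pi-2\arg\big(1-e^{-2l+2\mathbf ix}\big)$, adds the contribution $6\mathrm{Re}\zeta+\pi$ of the quadratic part, and takes the limits $\arg\to 0$ and $\arg\to\arg(-e^{2\mathbf ix})$. This is exactly your computation of $\mathrm{Re}\,V'_{\boldsymbol x}$ organized term by term, and your lower half-plane remark makes explicit the branch choice $\arg(-e^{2\mathbf i c})=2c-\pi$ that the paper uses implicitly.
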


\begin{proof} By a direct computation, we have
$$ \frac{\partial \mathrm{Im}L( x +\mathbf il)}{\partial l} =2 x -\pi-2\arg\Big(1-e^{-2l+2\mathbf i x}\Big).$$
As a consequence, for a fixed $ x\in (0,\frac{\pi}{2}),$ we have
$$\lim_{l\to +\infty} \frac{\partial \mathrm{Im}L( x +\mathbf il)}{\partial l} =2 x -\pi -2\arg(1)=2 x -\pi$$
and
$$\lim_{l\to -\infty} \frac{\partial \mathrm{Im}L( x +\mathbf il)}{\partial l} =2 x -\pi-2\arg(-e^{2\mathbf i x })=\pi-2 x .$$
Then
$$\frac{\partial \mathrm{Im}V_{\boldsymbol x}(\zeta)}{\partial \mathrm{Im}\zeta} =6\mathrm{Re}\zeta+\pi+\sum_{i=1}^3\frac{\partial \mathrm{Im}L(\zeta-\mathbf iy_i)}{\partial \mathrm{Im}\zeta};$$
and as a consequence, we have on $\Gamma _c$ that 
$$\lim_{\mathrm{Im}\zeta\to +\infty}\frac{\partial \mathrm{Im}V_{\boldsymbol x}(\zeta)}{\partial \mathrm{Im}\zeta}=6\mathrm{Re}\zeta+\pi+\sum_{i=1}^3\big(2\mathrm{Re}(\zeta-\mathbf iy_i)-\pi\big)=12c-2\pi,$$
and
$$\lim_{\mathrm{Im}\zeta\to -\infty}\frac{\partial \mathrm{Im}V_{\boldsymbol x}(\zeta)}{\partial \mathrm{Im}\zeta}=6\mathrm{Re}\zeta+\pi+\sum_{i=1}^3\big(\pi-2\mathrm{Re}(\zeta-\mathbf iy_i)\big)=4\pi.$$
\end{proof}

 \begin{proposition}\label{bound} 
 For $\delta >0$, let  $$D_{\delta}=\bigg\{\zeta\in \mathbb C\ \bigg|\ \mathrm {Re}\zeta\in\Big[\delta,\frac{\pi}{6}-\delta\Big]\bigg\}.$$
 
 \begin{enumerate}[(1)]
 \item 
 For $\delta> 0$  sufficiently small, there exists a constant $K=K_{\delta}>0$ such that 
$$\Bigg|\frac{\partial \mathrm{Im}\kappa_{\boldsymbol x}(\zeta)}{\partial\mathrm{Im}\zeta}\Bigg|<K$$
for all $\boldsymbol x\in\mathbb R^6$ and all $\zeta\in D_{\delta}.$
\item  
For $b>0$ and $\delta> 0$ both sufficiently small, there exists a constant $N=N_{\delta}>0$ independent of $b$ and of  $\boldsymbol x\in\mathbb R^6$ such that 
$$\mathrm{Im}\nu_{\boldsymbol x,b}(\zeta) \leqslant \big|\nu_{\boldsymbol x,b}(\zeta)\big|<N$$
for all $\zeta\in D_{\delta}.$
\end{enumerate}
\end{proposition}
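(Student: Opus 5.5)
For (1), I would compute $\partial\kappa_{\boldsymbol x}/\partial\mathrm{Im}\zeta$ directly from (\ref{kappa}). The term $2\pi^2-2\pi\zeta$ contributes a constant $-2\pi$ to $\partial \mathrm{Im}\kappa_{\boldsymbol x}/\partial\mathrm{Im}\zeta$. For the logarithmic terms, write $w_i=\zeta-\mathbf i y_i$. Then
$$\frac{\partial}{\partial \mathrm{Im}\zeta}\log\big(1-e^{2\mathbf i w_i}\big)=\frac{2e^{2\mathbf i w_i}}{1-e^{2\mathbf i w_i}},$$
because $\partial/\partial\mathrm{Im}\zeta$ acts on a holomorphic function as $\mathbf i\,d/d\zeta$. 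On $D_\delta$ we have $\mathrm{Re}\, w_i=\mathrm{Re}\zeta\in[\delta,\frac{\pi}{6}-\delta]$, so $e^{2\mathbf i w_i}$ lies on the ray of argument $2\mathrm{Re}\zeta\in[2\delta,\frac{\pi}{3}-2\delta]$. Its modulus $e^{2(\mathrm{Im}\zeta-y_i)}$ is arbitrary. The function $r\mapsto re^{\mathbf i\phi}/(1-re^{\mathbf i\phi})$ is bounded uniformly for $r\in[0,\infty)$ and $\phi$ in a compact subset of $(0,\pi)$: the distance from $1$ to the ray is at least $\sin\phi$, and as $r\to\infty$ the ratio tends to $-1$. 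Explicitly, $|re^{\mathbf i\phi}/(1-re^{\mathbf i\phi})|\leqslant \max\{1/\sin\phi,\,2\}$, obtained by splitting into $r\leqslant 2$ and $r>2$. This gives a bound depending only on $\delta$, uniform in $\boldsymbol x$ (the shift $y_i$ only changes $r$), and therefore $K_\delta$.

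For (2), I would use the asymptotic expansion of $\log S_b$ in $b$. By definition,
$$\nu_{\boldsymbol x,b}=\frac{V_{\boldsymbol x,b}-V_{\boldsymbol x}-\kappa_{\boldsymbol x}b^2}{b^4}.$$
The polynomial parts of (\ref{Vxb}) and (\ref{Vx}) differ by explicit terms of order $b^2$ and $b^4$: namely $-\pi\mathbf i b^2\sum y_i$, $\frac{\pi^2}{6}(3b^2-b^4)$ and $\pi b^2\zeta$. These $b^2$ terms should cancel against the parts of $\kappa_{\boldsymbol x}b^2$ that do not involve $\log(1-e^{2\mathbf i w_i})$, together with the $b^2$-terms of the expansion of $2\pi\mathbf i b^2\log S_b(w/\pi b)$. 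What remains of order $b^4$ is a constant independent of $\boldsymbol x$ and $\zeta$, plus the remainder from $\log S_b$. So the key input is the estimate, essentially \cite[Lemma/Proposition in Section 2]{LMSWY} used for the ordinary $6j$ symbols:
$$2\pi\mathbf i b^2\log S_b\Big(\frac{w}{\pi b}\Big)=L(w)+\Big(\text{explicit }b^2\text{ term}\Big)b^2+O(b^4),$$
uniformly for $\mathrm{Re}\, w\in[\delta,\pi-\delta]$ and all $\mathrm{Im}\, w$. Here the explicit $b^2$ term is a linear function of $w$ plus $-\pi\mathbf i\log(1-e^{2\mathbf i w})$ up to normalization. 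I would derive this from the integral representation of $\log S_b$, or equivalently from Faddeev's quantum dilogarithm: expand $\frac{1}{4\sinh(bt/2)\sinh(t/2b)}$ in powers of $b$ and bound the remainder by a contour argument. Uniformity in $\mathrm{Im}\, w$ follows because the remainder terms are of the form $\mathrm{Li}_{-k}(e^{2\mathbf i w})$, which are bounded on rays avoiding $1$, by the same geometry as in (1).

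Finally, summing over the three $w_i=\zeta-\mathbf i y_i$ gives $|\nu_{\boldsymbol x,b}|<N_\delta$ uniformly in $\boldsymbol x$ and $b$ small. The inequality $\mathrm{Im}\,\nu\leqslant|\nu|$ is trivial. The main obstacle is the uniform $O(b^4)$ remainder for $\log S_b$ as $|\mathrm{Im}\, w|\to\infty$. If it is not directly quotable, I would prove it by bounding the Taylor remainder of the integrand on the contour $\Omega$, and by handling large $|\mathrm{Im}\, w|$ via the functional equation or the quasi-periodicity together with (\ref{ST})-type asymptotics, checking that the $b^2$-coefficient matches $\kappa_{\boldsymbol x}$ exactly.
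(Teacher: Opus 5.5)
Your proposal is correct and takes essentially the same approach as the paper, which defers to \cite[Propositions 3.10 and 3.11]{LMSWY}. Part (1) is the direct computation bounding $\mathrm{Re}\frac{e^{2\mathbf i w_i}}{1-e^{2\mathbf i w_i}}$ on rays of argument in $[2\delta,\frac{\pi}{3}-2\delta]$. Part (2) rests on the expansion of $2\pi\mathbf i b^2\log S_b\big(\frac{w}{\pi b}\big)$ to order $b^4$, uniform in $\mathrm{Im}\,w$ for $\mathrm{Re}\,w\in[\delta,\pi-\delta]$, whose $b^2$-coefficients, summed over $w_i=\zeta-\mathbf i y_i$ together with the polynomial differences you list, give exactly $\kappa_{\boldsymbol x}(\zeta)$.
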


The proof of Proposition \ref{bound} follows verbatim that of \cite[Proposition 3.10 and Proposition 3.11]{LMSWY}.

\subsection{Proof of Theorem \ref{6jasymp}} \label{3.3}

\begin{proof}[Proof of Theorem \ref{6jasymp}]
Recall the domain $D=\Big\{ \zeta\in\mathbb C\ \Big|\ 0< \mathrm{Re}\zeta < \frac{\pi}{6} \Big\}$ and let
$$\Gamma^*=\{\zeta\in D\ |\ \mathrm{Re}\zeta=\mathrm{Re}\zeta^*\}.$$ 
Let  $d>0$ be sufficiently small so that the region 
 $$B_{d}=\Big\{\zeta\in \mathbb C \ \Big|\ |\mathrm{Re}\zeta - \mathrm{Re}\zeta^*| \leqslant d \text{ and } |\mathrm{Im}\zeta-\mathrm{Im}\zeta^*|\leqslant d \Big\}$$
 lies entirely in $D.$ By Proposition \ref{concave}, $\frac{\partial \mathrm{Im}V_{\boldsymbol x}(\zeta)}{\partial \mathrm{Im}\zeta}$ is strictly decreasing as $\zeta$ moves along $\Gamma^*$ from $\zeta^*$ with $\mathrm{Im}\zeta$ approaching $+\infty,$ and is strictly increasing as $\zeta$ moves along $\Gamma^*$ from $\zeta^*$ with $\mathrm{Im}\zeta$ approaching $-\infty.$ 
Therefore, there is an $\epsilon>0$ sufficiently small so that 
 \begin{equation}\label{ImU}
\mathrm{Im} V_{\boldsymbol x}(\zeta^*\pm \mathbf il )< -2\mathrm{Cov}(\Delta)-4\epsilon
\end{equation}
and for $l>d;$ and by Proposition \ref{concave},  Proposition \ref{limder} and (\ref{12}),  there is an $L>d$ such that 
\begin{equation}\label{Dv1}
 \frac{\partial \mathrm{Im} V_{\boldsymbol x}}{\partial\mathrm{Im}\zeta } (\zeta^*+  \mathbf i l)<-\pi + \epsilon\quad\text{and}\quad  \frac{\partial \mathrm{Im} V_{\boldsymbol x}}{\partial\mathrm{Im}\zeta} (\zeta^* -  \mathbf i l)>\pi-\epsilon
\end{equation}
for $l>L.$  
Let
$$\Gamma^*_d=\Gamma^*\cap B_d=\Big\{ \zeta \in \Gamma^*\ \Big|\ |\mathrm{Im}\zeta-\mathrm{Im}\zeta^*|<d\Big\}$$
and let 
$$\Gamma^*_L=\Big\{ \zeta \in \Gamma^*\ \Big|\ |\mathrm{Im}\zeta-\mathrm{Im}\zeta^*|\leqslant L\Big\}.$$
We will show that, as $b\to 0,$
\begin{enumerate}[(I)]
\item 
$$\frac{1}{\pi b}\int_{\Gamma^*_d}\exp\bigg(\frac{V_{\boldsymbol  x}(\zeta)+ \kappa_{\boldsymbol  x}(\zeta)b^2+ \nu_{\boldsymbol  x,b}(\zeta)b^4}{2\pi \mathbf i b^2} \bigg)d\zeta = \frac{1}{2} \frac{e^{\frac{-\mathrm{Cov}(\Delta)}{\pi b^2}}}{\sqrt[4]{-\det\mathrm{Gram}(\Delta)}}\Big(1+O\big(b^2\big)\Big),
$$

\item $$\bigg|\frac{1}{\pi b}\int_{\Gamma^*_L\setminus \Gamma^*_d}\exp\bigg(\frac{V_{\boldsymbol  x}(\zeta)+ \kappa_{\boldsymbol  x}(\zeta)b^2+ \nu_{\boldsymbol  x,b}(\zeta)b^4}{2\pi \mathbf i b^2} \bigg)d\zeta\bigg|< O\Big(e^{\frac{-\mathrm{Cov}(\Delta)-\epsilon_1}{\pi b^2}}\Big),$$
and 

\item $$\bigg|\frac{1}{\pi b}\int_{\Gamma^*\setminus \Gamma^*_L}\exp\bigg(\frac{V_{\boldsymbol  x}(\zeta)+ \kappa_{\boldsymbol  x}(\zeta)b^2+ \nu_{\boldsymbol  x,b}(\zeta)b^4}{2\pi \mathbf i b^2} \bigg)d\zeta\bigg|<O\Big(e^{\frac{-\mathrm{Cov}(\Delta)-\epsilon_1}{\pi b^2}}\Big)$$
\end{enumerate}
for some $\epsilon_1>0,$ from which the result follows.
\medskip

For (I), we claim that  all the conditions of Proposition \ref{saddle} are satisfied by letting $\hbar=b^2,$ $D=B_d,$ $f=\frac{V_{\boldsymbol  x}}{2\pi \mathbf i},$ $g=\exp\big(\frac{\kappa_{\boldsymbol  x}}{2\pi \mathbf i }\big),$ $f_\hbar=\frac{V_{\boldsymbol  x}+\nu_{\boldsymbol  x,b}b^4}{2\pi \mathbf i},$ $\upsilon_\hbar=\frac{\nu_{\boldsymbol  x,b}}{2\pi \mathbf i},$ $S=\Gamma^*_d$ and $c=\zeta^*.$ 

Indeed, by Proposition \ref{critical2}, $\zeta^*$ is a critical point of $f=\frac{V_{\boldsymbol  x}}{2\pi \mathbf i}$ in $B_{d},$ hence condition (i) is satisfied. 

By Propositions \ref{critical2} and \ref{concave}, $\zeta^*$ is the unique maximum point of $\mathrm{Re}f=\frac{\mathrm{Im}V_{\boldsymbol x}}{2\pi}$ on $\Gamma^*_d,$ hence condition (ii) is satisfied.

For conditions (iii) and (iv), since $\kappa_{\boldsymbol  x}(\zeta^*)$ is a finite value,  $g(\zeta^*)=\exp\big(\frac{\kappa_{\boldsymbol  x}(\zeta^*)}{2\pi \mathbf i }\big)\neq 0,$ and condition (iv) is satisfied. Also by this and Proposition \ref{Hess}, $V''_{\boldsymbol  x}(\zeta^*)=-16{\exp\big(\frac{\kappa_{\boldsymbol  x}(\zeta^*)}{\pi \mathbf i}\big)}\sqrt{\det\mathrm{Gram}(\Delta)}\neq 0,$ and condition (iii) is satisfied.

For condition (v), by Proposition \ref{bound},  $|\upsilon_{\hbar}(\zeta)|=\big|\frac{\nu_{\boldsymbol  x,b}(\zeta)}{2\pi \mathbf i}\big|<\frac{N}{2\pi}$ on $B_{d}.$ 

For condition (vi), since $\Gamma^*$ is a straight line, it is a smooth embedding near $\zeta^*.$

Finally, by Proposition \ref{saddle}, Proposition \ref{critical2}  and Proposition \ref{Hess}, we have as $b\to 0,$
\begin{equation*}
\begin{split}
\frac{1}{\pi b}\int_{\Gamma^*_d}\exp\bigg(\frac{V_{\boldsymbol  x,b}(\zeta)}{2\pi \mathbf ib^2}\bigg) d\zeta=& \frac{(2\pi b^2)^\frac{1}{2}}{\pi b} \frac{\exp\big(\frac{\kappa_{\boldsymbol  x}(\zeta^*)}{2\pi \mathbf i}\big)}{\sqrt{-\frac{V_{\boldsymbol  x}''(\zeta^*)}{2\pi \mathbf i}}}e^{\frac{V_{\boldsymbol x}(\zeta^*)}{2\pi \mathbf i b^2}}\Big(1+O\big(b^2\big)\Big)\\
=&\frac{1}{2}\frac{e^{\frac{-\mathrm{Cov}(\Delta)}{\pi b^2}}}{\sqrt[4]{-\det\mathrm{Gram}(\Delta)}}\Big(1+O\big(b^2\big)\Big).
\end{split}
\end{equation*}
This completes the proof of (I). 
\medskip

For (II) and (III), we have
\begin{equation*}
\begin{split}
& \bigg|\frac{1}{\pi b}\int_{\Gamma^*\setminus \Gamma^*_d}\exp\bigg(\frac{V_{\boldsymbol  x}(\zeta)+ \kappa_{\boldsymbol  x}(\zeta)b^2+ \nu_{\boldsymbol  x,b}(\zeta)b^4}{2\pi \mathbf i b^2} \bigg)d\zeta\bigg|\\
\leqslant & \frac{1}{\pi b}\int_{\Gamma^*\setminus \Gamma^*_d}\exp\bigg(\frac{\mathrm{Im}V_{\boldsymbol  x}(\zeta)+\mathrm{Im}\kappa_{\boldsymbol  x}(\zeta)b^2+\mathrm{Im}\nu_{\boldsymbol  x,b}(\zeta)b^4}{2\pi b^2} \bigg)|d\zeta|.
\end{split}
\end{equation*}
\smallskip

For  (II), by Proposition \ref{bound}, there is a  $b_1>0$ such that 
\begin{equation}\label{last2}
\mathrm{Im}\nu_{\boldsymbol x,b}(\zeta)b^4<Nb^4<\epsilon 
\end{equation}
for all $b<b_1$ and for all $\zeta\in\Gamma^*;$  and together with  (\ref{ImU}), we have
\begin{equation}\label{last3}
\mathrm{Im}V_{\boldsymbol  x}(\zeta)+\mathrm{Im}  \nu_{\boldsymbol  x,b}(\zeta)b^4< -2\mathrm{Cov}(\Delta) -3\epsilon
\end{equation}
for all $b<b_1$ and  $\zeta\in\Gamma^*\setminus \Gamma^*_d.$ By the compactness of ${\Gamma^*_L}\setminus \Gamma^*_d,$ there exists an $M>0$ such that 
\begin{equation}\label{Imk}
\mathrm{Im}\kappa_{\boldsymbol x}(\zeta)<M
\end{equation}
for all $\zeta\in\Gamma^*_L\setminus \Gamma^*_d.$ As a consequence of (\ref{last3}) and (\ref{Imk}), we have
\begin{equation*}
\begin{split}
&\frac{1}{\pi b}\int_{\Gamma^*_L\setminus \Gamma^*_d}\exp\bigg(\frac{\mathrm{Im}V_{\boldsymbol  x}(\zeta)+\mathrm{Im}\kappa_{\boldsymbol  x}(\zeta)b^2+\mathrm{Im}\nu_{\boldsymbol  x,b}(\zeta)b^4}{2\pi b^2} \bigg)|d\zeta|\\
<  & \frac{2(L-d) e^{\frac{M}{2\pi}}}{\pi b} \exp\bigg(\frac{-\mathrm{Cov}(\Delta)-\epsilon }{\pi b^2}   \bigg )< O\Big(e^{\frac{-\mathrm{Cov}(\Delta)-\epsilon_1}{\pi b^2}}\Big)
\end{split}
\end{equation*}
for any $\epsilon_1<\epsilon.$ This completes the proof of (II). 
\smallskip

For (III), there  is a $b_0\in (0, b_1)$ such that for all $b<b_0,$
\begin{equation}\label{ImK}
\mathrm{Im}\kappa_{\boldsymbol x}(\zeta^*\pm \mathbf i L) b^2<\epsilon,
\end{equation}
$Kb^2<\epsilon$ and $Nb^4<\epsilon,$ where $K$ and $N$ are respectively the constants in Proposition \ref{bound} (1) and (2). We claim that, for $\zeta\in\Gamma^*\setminus \Gamma^*_L$ and $b<b_0,$ 
\begin{equation}\label{cl}
\mathrm{Im}V_{\boldsymbol  x}(\zeta)+\mathrm{Im}\kappa_{\boldsymbol  x}(\zeta)b^2+\mathrm{Im}\nu_{\boldsymbol  x,b}(\zeta)b^4<-2\big(\mathrm{Cov}(\Delta)+\epsilon\big)-(\pi-2\epsilon)\big(|\zeta-\zeta^*|-L\big),
\end{equation}
as a consequence of which, we have,
\begin{equation}\label{CI}
\begin{split}
& \frac{1}{\pi b}\int_{\Gamma^*\setminus \Gamma^*_L}\exp\bigg(\frac{\mathrm{Im}V_{\boldsymbol  x}(\zeta)+\mathrm{Im}\kappa_{\boldsymbol  x}(\zeta)b^2+\mathrm{Im}\nu_{\boldsymbol  x,b}(\zeta)b^4}{2\pi b^2} \bigg)|d\zeta| \\
 < & \frac{1}{\pi b} \exp\bigg(\frac{-\mathrm{Cov}(\Delta)-\epsilon}{\pi b^2}\bigg)\int_{\Gamma^*\setminus \Gamma^*_L}\exp\bigg(\frac{-(\pi-2\epsilon)\big(|\zeta-\zeta^*|-L\big)}{2\pi}\bigg) |d\zeta|\\
< & O\Big(e^{\frac{-\mathrm{Cov}(\Delta)-\epsilon_1}{\pi b^2}}\Big)
\end{split}
\end{equation}
for any $\epsilon_1<\epsilon.$ 

For the proof of the claim, by (\ref{Dv1}) and the choice of $b_0,$ for $l>L,$ we have 
$$\frac{\partial}{\partial \mathrm{Im}\zeta} \Big(\mathrm{Im}V_{\boldsymbol x}(\zeta^*+\mathbf il)+\mathrm{Im}\kappa_{\boldsymbol x}(\zeta^*+\mathbf il)b^2\Big)<-\pi+2\epsilon,$$
and 
$$\frac{\partial}{\partial \mathrm{Im}\zeta} \Big(\mathrm{Im}V_{\boldsymbol x}(\zeta^*-\mathbf il)+\mathrm{Im}\kappa_{\boldsymbol x}(\zeta^*-\mathbf il)b^2\Big)>\pi-2\epsilon.$$
Together with the Mean Value Theorem, (\ref{ImU}) and (\ref{ImK}), we have 
\begin{equation}\label{Bou}
\begin{split}
&\mathrm{Im}V_{\boldsymbol x}(\zeta)+ \mathrm{Im}\kappa_{\boldsymbol x}(\zeta)b^2 \\
< & \mathrm{Im}V_{\boldsymbol x}(\zeta^*\pm \mathbf iL) + \mathrm{Im}
\kappa_{\boldsymbol x}(\zeta^*\pm \mathbf iL)b^2 - (\pi-2\epsilon) \big |  \zeta - (\zeta^*\pm \mathbf iL ) \big|\\
< & -2\mathrm{Cov}(\Delta)-3\epsilon  -(\pi-2\epsilon)\big(|\zeta-\zeta^*|-L \big)
\end{split}
\end{equation}
for all $\zeta \in \Gamma^*\setminus \Gamma^*_L.$  Finally, putting (\ref{Bou}) and (\ref{last2}) together, we have  (\ref{cl}) and the first  inequality in (\ref{CI}); and since 
$$|\zeta-\zeta^*|-L\to+\infty$$
as $\zeta \in \Gamma^*\setminus \Gamma^*_L$ approaches $\infty,$ we have the second inequality in (\ref{CI}). This completes the proof of (III).
\medskip

Putting (I), (II), (III) and (\ref{idealb6j}) together, we have as $b\to 0,$ 
$$\bigg|\begin{matrix} a_1 & a_2 & a_3 \\ a_4 & a_5 & a_6 \end{matrix} \bigg|_b =\frac{1}{2}\frac{e^{\frac{-\mathrm{Cov}(\Delta)}{\pi b^2}}}{\sqrt[4]{-\det\mathrm{Gram}(\Delta)}}\Big(1+O\big(b^2\big)\Big). $$ 
\end{proof}


\section{$\mathrm {U}_{q\tilde q}\mathfrak{sl}(2;\mathbb R)$ Turaev-Viro invariants for cusped  $3$-manifolds}

The goal of this section is to prove Theorem \ref{converge} and Theorem \ref{topinv}.

Let $V$ be the set of boundary components of $M.$ Recall that the $|E|\times |V|$-matrix $A$ is the adjacency matrix of the edges and the boundary components given by $A_{e,v}=|e\cap v|,$ the number of points of intersections of the edge $e$ with the boundary component $v.$  The $\mathbb R^V$-action on $\big(\frac{Q}{2}+\mathbf i \mathbb R\big)^E$ is given by $\boldsymbol u\cdot \boldsymbol a = \boldsymbol a + \frac{\mathbf i}{2\pi b} A \boldsymbol u;$ and the  quotient measure $d
\mu$ on $\big(\frac{Q}{2}+\mathbf i \mathbb R\big)^E/\mathbb R^V$ is obtained by disintegrating the Lebesgue measure $\prod_{e\in E} d\mathrm{Im}a_e$ on $\big(\frac{Q}{2}+\mathbf i\mathbb R\big)^ E$ along the orbits $[\boldsymbol a]= \boldsymbol a + \frac{\mathbf i}{2\pi b} A(\mathbb R^V)$ of the action. 

In the concrete computations, 
we  consider $A$ as a linear map from $\mathbb R^V$ to $\big(\frac{Q}{2}+\mathbf i\mathbb R\big)^E,$ where the vector space structure of 
$\big(\frac{Q}{2}+\mathbf i\mathbb R\big)^E$ is induced from that of $\mathbb R^E$ via the bijection between these two spaces defined by sending $\boldsymbol a$ to $\mathrm{Im}\boldsymbol a.$ 
It is easy to see that $A$ is injective, 
hence is of full rank and $\det(A^TA)\neq 0,$ where $A^T$ is the transpose of $A.$
Let $X$ be the orthogonal complement of $A(\mathbb R^V)$ in $\big(\frac{Q}{2}+\mathbf i\mathbb R\big)^E,$ that is,
$$X=\Bigg\{ \boldsymbol a\in  \bigg(\frac{Q}{2}+i\mathbb R\bigg)^E \ \Bigg|\  \mathrm{Im}\boldsymbol a\cdot A(\boldsymbol u) = 0\ \text{for all }\boldsymbol u\in \mathbb R^V\Bigg\}.$$ Then the integral (\ref{b-TV}) can be computed  as 
\begin{equation}\label{b-TV2}
   \mathrm{TV}_b(M,\mathcal T)= \Big(\frac{1}{2\pi b}\Big)^{|V|} \int _ X \prod_{e\in E} |e|_{\boldsymbol{a}}\prod_{\Delta\in T}|\Delta|_{\boldsymbol a}\sqrt{\det(A^T A)}dm_X(\boldsymbol a),
\end{equation}
where $dm_X$ is the Lebesgue measure on $X$ obtained by restricting the Lebesgue measure $\prod_{e\in E}d\mathrm{Im}a_e$ on $\big(\frac{Q}{2}+\mathbf i\mathbb R\big)^E.$

\subsection{Proof of Theorem \ref{converge}}

 For the proof of Theorem \ref{converge} and Theorem \ref{vol}, we need the following

\begin{lemma}\label{inj} 
For each $\Delta\in \mathcal T, $ we denote its edges by $e_{\Delta,1},\dots,e_{\Delta,6},$ with  $e_j$ and $e_{j+3}$ being the opposite to each other for $j\in\{1,2,3\}$.  For $\boldsymbol a \in \big(\frac{Q}{2}+\mathbf i \mathbb R\big)^E,$ we let 
$$\boldsymbol b_\Delta(\boldsymbol a)=(b_{\Delta,1}(\boldsymbol a),b_{\Delta,2}(\boldsymbol a),b_{\Delta,3}(\boldsymbol a)),$$
where  for each $j\in \{1,2,3\},$
$$b_{\Delta,j}(\boldsymbol a)=\mathrm{Im}a_{\Delta,j}+\mathrm{Im}a_{\Delta,j+3}-\frac{1}{3}\sum_{k=1}^6\mathrm{Im}a_{\Delta,k}.$$ 
Let $B: \big(\frac{Q}{2}+\mathbf i\mathbb R\big)^E \to (\mathbb R^3)^T$ be the linear map defined by
$$B(\boldsymbol a)= \big(\boldsymbol b_{\Delta}(\boldsymbol a)\big)_{\Delta\in T}.$$
Then the restriction 
$B|_X: X \to (\mathbb R^3)^T$
of $B$ to $X$ is injective. 
\end{lemma}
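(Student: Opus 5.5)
The plan is to identify $\ker B$ exactly and show that $\ker B\subseteq A(\mathbb R^V)$, using the vector space structure on $\big(\frac Q2+\mathbf i\mathbb R\big)^E$ induced by $\boldsymbol a\mapsto\mathrm{Im}\boldsymbol a$. Since $X$ is by definition the orthogonal complement of $A(\mathbb R^V)$, it would then follow that $\ker(B|_X)=X\cap\ker B\subseteq X\cap A(\mathbb R^V)=\{0\}$, which is the injectivity claimed in Lemma \ref{inj}.

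\textbf{Step 1: local analysis in one tetrahedron.} Fix $\Delta\in T$ with ideal vertices $v_1,\dots,v_4$, and write $x_{ij}=\mathrm{Im}a_{e_{ij}}$ for the value on the edge $e_{ij}$ of $\Delta$. The condition $\boldsymbol b_\Delta(\boldsymbol a)=0$ says that the three opposite-pair sums $x_{12}+x_{34}$, $x_{13}+x_{24}$, $x_{14}+x_{23}$ are all equal. This is two independent linear conditions on $\mathbb R^6$, so it cuts out a $4$-dimensional subspace. On the other hand, consider the linear map $\mathbb R^4\to\mathbb R^6$, $(u_1,\dots,u_4)\mapsto(u_i+u_j)_{i<j}$; this is the algebraic form of a change of horospheres at the vertices.
\begin{enumerate}[(a)]
\item Its image lies in that subspace, since every opposite-pair sum equals $u_1+u_2+u_3+u_4$.
\item It is injective, since for instance $u_1=\frac12(x_{12}+x_{13}-x_{23})$, and similarly for the other $u_i$.
\end{enumerate}
By dimension count, $\boldsymbol b_\Delta(\boldsymbol a)=0$ if and only if there are unique numbers $u^\Delta_{1},\dots,u^\Delta_4$, one for each corner of $\Delta$ (equivalently, each cusp triangle of $\Delta$), with $x_{ij}=u^\Delta_i+u^\Delta_j$.

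\textbf{Step 2: gluing.} Suppose now $B(\boldsymbol a)=0$. Each corner of each tetrahedron is a triangle in the induced triangulation of some boundary torus $v\in V$, and Step 1 assigns it a number. Two such triangles are adjacent exactly when the corresponding tetrahedra are glued along a face $F$ with vertices $i,j,k$. On $F$ the three edge values satisfy $x_{ij}=u_i+u_j$, $x_{jk}=u_j+u_k$, $x_{ik}=u_i+u_k$, and this triangular system is invertible. Hence both tetrahedra assign the same number to each pair of glued corners. This holds even when a tetrahedron is glued to itself, because the numbers were attached to individual corners rather than to vertices of $M$.

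Since each cusp cross-section is a connected triangulated surface, adjacency of triangles then forces a single value $u_v$ for each $v\in V$. Consequently every edge $e$ has value $\mathrm{Im}a_e=\sum_{\text{ends of }e}u_{v(\text{end})}=(A\boldsymbol u)_e$, where the multiplicity $A_{e,v}=|e\cap v|$ correctly accounts for edges with both ends in the same cusp. Thus $\ker B\subseteq A(\mathbb R^V)$, and the lemma follows as explained above.

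I expect the gluing step to be the main obstacle. The point that needs care is that the local vertex values are well defined per corner and are forced to agree across faces, including under self-gluings. The invertibility of the $3\times3$ face system handles this. Beyond that, the only global input is the connectedness of each boundary component.
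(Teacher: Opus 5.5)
Your proof is correct and follows essentially the same route as the paper. Both arguments show $\ker B\subseteq A(\mathbb R^V)$ by recovering cusp values from the face formula $\frac12(x_{ij}+x_{ik}-x_{jk})$, using the equality of the three opposite-edge sums in each tetrahedron for consistency, and using connectedness of each cusp cross-section to obtain a single $u_v$. Your per-corner bookkeeping with the dimension count is a slightly cleaner packaging of the paper's chain-of-triples argument, and you correctly note that only this inclusion, not the full equality $\ker B=A(\mathbb R^V)$ that the paper also proves, is needed for injectivity on $X$.
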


\begin{proof}
It suffices to prove that 
$\ker B = A(\mathbb R^V).$
\medskip

We first prove that $\ker B\subset A(\mathbb R^V).$ Let $\boldsymbol a\in \ker B.$ Then $B(\boldsymbol a)=\big(\boldsymbol b_\Delta(\boldsymbol a)\big)_{\Delta\in T}=\boldsymbol 0$ implies that 
for each $\Delta\in T$ with the edges $e_{\Delta,1},\dots,e_{\Delta,6}$ such that $e_{\Delta,j}$ and $e_{\Delta,j+3}$ are opposite for each $j\in\{1,2,3\},$
\begin{equation}\label{==}
\mathrm{Im}a_{\Delta,1}+ \mathrm{Im}a_{\Delta,4}= \mathrm{Im}a_{\Delta,2}+ \mathrm{Im}a_{\Delta,5}= \mathrm{Im}a_{\Delta,3}+ \mathrm{Im}a_{\Delta,6}.   
\end{equation}

We define a vector $\boldsymbol u=(u_v)_{v\in V}\in\mathbb R^V$ as follows: For each boundary component $v\in V$ of $M,$ let $F$ be a face of $\Delta$ that intersects $v,$ let $e_i$ and $e_j$ be two edges of $F$ intersecting $v$ and let $e_k$ be the other edge of $F.$ Then we let 
$$u_v=\frac{\mathrm{Im}a_i+\mathrm{Im}a_j-\mathrm{Im}a_k}{2}.$$
We claim that $u_v$ is independent of the choice of the triple $(F,e_i,e_j),$ hence is a  well-defined vector in $\mathbb R^V.$  Indeed, any two such triples $(F,e_i,e_j)$ and $(F',e_i',e_j')$ can be related by a sequence of triples $\{(F_k,e_{i_k},e_{j_k})\}_{k\in\{1,\dots,n\}},$ such that for each two adjacent triples $(F_k,e_{i_k},e_{j_k})$ and $(F_{k+1},e_{i_{k+1}},e_{j_{k+1}}),$ $F_k$ and $F_{k+1}$ are the faces of a common tetrahedron $\Delta$ and $\{e_{i_k},e_{j_k}\}\cap \{e_{i_{k+1}},e_{j_{k+1}}\}\neq \emptyset.$ Then applying (\ref{==}) to $\Delta,$ we have that the two adjacent triples give the same value of $u_v,$ which implies that the two original triples give the same value of $u_v.$ This proves the claim.

Next, we show that $A(\boldsymbol u)=\boldsymbol a.$ For each $e\in E,$ let $v_1, v_2\in V$ that intersect $e,$ and let $F$ be a face containing $e.$ Let $e_1$ and $e_2$ be the other two edges of $F.$ 
Then
\begin{equation*}
\begin{split}
A(\boldsymbol u)_e= & \frac{Q}{2}+ \mathbf i (u_{v_1}+u_{v_2}) 
 = \frac{Q}{2}+ \mathbf i \bigg( \frac{\mathrm{Im}a_e+\mathrm{Im}a_{e_1}-\mathrm{Im}a_{e_2}}{2}+  \frac{\mathrm{Im}a_e+\mathrm{Im}a_{e_2}-\mathrm{Im}a_{e_1}}{2} \bigg) = a_e. 
\end{split}
\end{equation*}
This implies that $A(\boldsymbol u)=\boldsymbol a,$
completing the proof that  $\ker B\subset A(\mathbb R^V).$ 
\medskip

Next we prove that $A(\mathbb R^V)\subset \ker B.$ Let $\boldsymbol u\in \mathbb R^V,$ and let $\boldsymbol a=A(\boldsymbol u)\in \big(\frac{Q}{2}+\mathbf i\mathbb R\big)^E.$ Then for each $e\in E$ with $v_1,v_2\in V$ intersection $e,$ we have 
$$\mathrm{Im}a_e= u_{v_1}+u_{v_2}.$$
Now for each $\Delta\in T$ with  $v_1,v_2,v_3, v_4\in V$ intersection $\Delta,$  we have 
$$\mathrm{Im}a_1+\mathrm{Im}a_4=\mathrm{Im}a_2+\mathrm{Im}a_5=\mathrm{Im}a_3+\mathrm{Im}a_6=\sum_{i=1}^4u_{v_i}.$$
As a consequence, $b_{\Delta,1}(\boldsymbol a)=b_{\Delta,2}(\boldsymbol a)=b_{\Delta,3}(\boldsymbol a)=0.$ This implies that 
$B(A(\boldsymbol u))=B(\boldsymbol a)=\boldsymbol 0,$ and hence $A(\mathbb R^V)\subset \ker B.$ 
\end{proof}

\begin{proof}[Proof of Theorem \ref{converge}]
Let $\boldsymbol \theta$ be any angle structure of $(M,\mathcal T).$ For each $\Delta\in T$ with edges $(e_1,\dots,e_6),$ we let $\boldsymbol \theta_\Delta=(\theta_{(\Delta,e_1)},\dots,\theta_{(\Delta,e_6)})$ be the six dihedral angles of $\Delta$ assigned by $\boldsymbol \theta;$ and for each $\boldsymbol a \in \big(\frac{Q}{2}+\mathbf i\mathbb R\big)^E,$ we let $\mathrm{Im}\boldsymbol a_\Delta =(\mathrm{Im}a_{e_1},\dots, \mathrm{Im}a_{e_6}).$ Then we have 
\begin{equation}\label{anglesum}
2\pi \sum_{e\in E} \mathrm{Im}a_e=\sum_{e\in E}\bigg(\sum_{\Delta\sim e}\theta_{(\Delta,e)}\bigg)\mathrm{Im}a_e  = \sum_{\Delta\in T} \boldsymbol \theta_\Delta\cdot  \mathrm{Im} \boldsymbol a_\Delta.
\end{equation}
Now by (\ref{b-TV2}), we have 
\begin{equation*}
\begin{split}
    \big|\mathrm{TV}_b( M,\mathcal T)\big| &
\leqslant \Big(\frac{1}{2\pi b}\Big)^{|V|}\sqrt{\det(A^TA)}\int_X \bigg| e^{2\pi Q\sum_{e\in E}\mathrm{Im}a_e} \prod_{\Delta\in T}|\Delta|_{\boldsymbol a}\bigg|dm_X(\boldsymbol a)\\
    &=\Big(\frac{1}{2\pi b}\Big)^{|V|}\sqrt{\det(A^TA)}\int_X \prod_{\Delta\in T}\bigg| e^{ Q(\boldsymbol \theta_\Delta\cdot \mathrm{Im}\boldsymbol a_\Delta)} |\Delta|_{\boldsymbol a}\bigg|dm_X(\boldsymbol a)\\
    &<\Big(\frac{1}{2\pi b}\Big)^{|V|}\sqrt{\det(A^TA)} \prod_{\Delta\in T} C_{b,\boldsymbol \theta_\Delta}\int_X  e^{-\sum_{\Delta\in T}\epsilon_{\boldsymbol \theta_\Delta}\max\big\{b_{\Delta,1}(\boldsymbol a),b_{\Delta,2}(\boldsymbol a),b_{\Delta,3}(\boldsymbol a)\big\}}dm_X(\boldsymbol a)\\
    &< +\infty,
\end{split}
\end{equation*}
where  the equality comes from (\ref{anglesum}), the second inequality comes from Corollary \ref{thetabj} with $C_{b,\boldsymbol \theta_\Delta}$ and $\epsilon_{\boldsymbol \theta_\Delta}$ the constants therein, and the last inequality comes from Lemma \ref{inj} that $B$ embeds $X$ into  $F^T,$ where  $F=\big\{(b_1,b_2,b_3)\in\mathbb R^3\ \big|\ b_1+b_2+b_3=0\big\},$ and that $\max\{b_1,b_2,b_3\}\geqslant \sqrt{\frac{b_1^2+b_2^2+b_3^2}{6}}$ on $F,$ hence
the function  $e^{-\sum_{\Delta\in T}\epsilon_{\boldsymbol \theta_\Delta}\max\{x_{\Delta,1},x_{\Delta,2},x_{\Delta,3}\}}$ is integrable on $F^T.$
\end{proof}

\subsection{Proof of Theorem \ref{topinv}}

For the proof of Theorem \ref{topinv}, we need the following 
\begin{lemma}\label{P}

Let $U$ be any subspace of  $\big(\frac{Q}{2}+\mathbf i\mathbb R\big)^E$ such that
$$U + A(\mathbb R^V) = \bigg(\frac{Q}{2}+\mathbf i\mathbb R\bigg)^E\quad\text{and}\quad U\cap A(\mathbb R^V)=\{\boldsymbol 0\}.$$
 Let $P$ be the $|E|\times (|E|-|V|)$-matrix consisting of any basis of $U$ as the colums, and let $[A,P]$ be the $|E|\times |E|$ matrix obtained by juxtaposing  $A$ and $P.$ Then 
 $$
  \mathrm{TV_b}(M,\mathcal T)= \Big(\frac{1}{2\pi b}\Big)^{|V|} \int _ U \prod_{e\in E} |e|_{\boldsymbol{a}}\prod_{\Delta\in T}|\Delta|_{\boldsymbol a}\frac{\big|\det[A,P]\big|}{\sqrt{\det(P^T P)}}dm_U(\boldsymbol a),
 $$
where $dm_U$ is the Lebesgue measure on $U$ obtained by restricting the Lebesgue measure $\prod_{e\in E}d\mathrm{Im}a_e$ on $\big(\frac{Q}{2}+i\mathbb R\big)^E.$ 
\end{lemma}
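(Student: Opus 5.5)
The plan is to reduce Lemma \ref{P} to formula (\ref{b-TV2}) by comparing two transversals to the orbits of the $\mathbb R^V$-action, namely $X$ and $U$. The key input is that the integrand
$$\Phi(\boldsymbol a)\doteq\prod_{e\in E}|e|_{\boldsymbol a}\prod_{\Delta\in T}|\Delta|_{\boldsymbol a}$$
is not invariant along orbits, so the transversals cannot simply be swapped. Instead we work directly with the definition of $d\mu$ as the disintegration of Lebesgue measure along the orbits $\boldsymbol a+A(\mathbb R^V)$ (identifying $\boldsymbol a$ with $\mathrm{Im}\boldsymbol a$ and absorbing the factor $\frac{1}{2\pi b}$ into the normalization, as in (\ref{b-TV2})). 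Concretely, for any complement $U$ of $A(\mathbb R^V)$, every orbit meets $U$ in exactly one point. Thus $\mathrm{TV}_b(M,\mathcal T)$ is an integral over the orbit space, parametrized by $U$, of a density that is constant along orbits. The same derivation that produced (\ref{b-TV2}) with $U=X$ then goes through for general $U$, with a different Jacobian.

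The first step is to check that $\Phi$ is actually constant along orbits. By Proposition \ref{prop: horosphere} and the tetrahedral symmetry (Proposition \ref{tetrasym}), shifting the horosphere at one vertex $v$ by $u_v$ multiplies each $|\Delta|_{\boldsymbol a}$ by an exponential factor $e^{-\pi Q s}$ for each corner at $v$. Each $|e|_{\boldsymbol a}=e^{2\pi Q\,\mathrm{Im}a_e}$ contributes a compensating factor. Since every cusp torus is triangulated by the vertex triangles, the counts match: the number of edge ends at $v$ is $\sum_e A_{e,v}$, and the number of triangles at $v$ is twice the number of edges of the cusp triangulation, consistent with Euler characteristic zero. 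Once this is verified, $\Phi(\boldsymbol a+A\boldsymbol u)=\Phi(\boldsymbol a)$. In fact this invariance is implicit in the definition (\ref{b-TV}), so we may take it as given.

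The second step is the change of variables. The map $\Psi:\mathbb R^V\times\mathbb R^{|E|-|V|}\to\mathbb R^E$ given by $(\boldsymbol u,\boldsymbol w)\mapsto A\boldsymbol u+P\boldsymbol w$ is a linear isomorphism, because $U\oplus A(\mathbb R^V)$ is the whole space. Its Jacobian is $|\det[A,P]|$. Lebesgue measure on $\mathbb R^E$ therefore pulls back to $|\det[A,P]|\,d\boldsymbol u\,d\boldsymbol w$. Moreover, $dm_U$ in the coordinates $\boldsymbol w$ equals $\sqrt{\det(P^TP)}\,d\boldsymbol w$. Hence, for any test function of the form $\Phi\cdot\chi$ with $\chi$ a normalized cutoff in the orbit direction, we get
$$d\mu=\frac{|\det[A,P]|}{\sqrt{\det(P^TP)}}\,dm_U,$$
with the Lebesgue measure on the orbits normalized by $d\boldsymbol u$ on $\mathbb R^V$ and the $(2\pi b)^{-|V|}$ convention. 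For the special case $P$ equal to an orthonormal basis of $X$, we have $|\det[A,P]|=\sqrt{\det(A^TA)}$ and $\det(P^TP)=1$, which recovers (\ref{b-TV2}). This confirms that the normalization is consistent.

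The main obstacle is bookkeeping of the normalization of the orbit measure, i.e. checking that the $d\boldsymbol u$ convention and the factor $(2\pi b)^{-|V|}$ agree with (\ref{b-TV2}). I would handle this by proving the general formula for an arbitrary pair $(U,P)$ and then specializing to $U=X$, which reduces the consistency check to the identity $|\det[A,P]|=\sqrt{\det(A^TA)}\sqrt{\det(P^TP)}$ when the columns of $P$ are orthogonal to those of $A$. This identity follows from $\det([A,P]^T[A,P])=\det(A^TA)\det(P^TP)$. Absolute convergence, which justifies Fubini in this argument, is supplied by Theorem \ref{converge}.
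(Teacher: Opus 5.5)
Your argument is correct, but it is organized differently from the paper's. The paper takes (\ref{b-TV2}) as its starting point and moves from $X$ to $U$ in two stages. First it treats a basis of the special form $\boldsymbol p_i=\boldsymbol o_i+\boldsymbol v_i$ with $\boldsymbol v_i\in A(\mathbb R^V)$, where $|\det[A,P]|=|\det[A,O]|$ and orbit-invariance gives $F(P\boldsymbol x)=F(O\boldsymbol x)$. It then passes to an arbitrary basis via $P=P'G$, since $|\det[A,P]|/\sqrt{\det(P^TP)}$ is unchanged.

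You instead go directly from the disintegration to $U$ by the single linear change of variables $(\boldsymbol u,\boldsymbol w)\mapsto\frac{1}{2\pi b}A\boldsymbol u+P\boldsymbol w$, whose Jacobian is $(2\pi b)^{-|V|}|\det[A,P]|$. This handles every basis at once and exhibits $(2\pi b)^{-|V|}\frac{|\det[A,P]|}{\sqrt{\det(P^TP)}}\,dm_U$ as the quotient measure $d\mu$ written in $U$-coordinates. In your version (\ref{b-TV2}) becomes the special case $P=O$ rather than an input. The cost is that you must fix the fiber normalization of the disintegration (the transported Haar measure $d\boldsymbol u$), which the paper never states. Your block-diagonal check $|\det[A,O]|=\sqrt{\det(A^TA)}$ correctly confirms that this is the normalization implicit in (\ref{b-TV2}).

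Three points need cleaning up.

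\emph{Invariance claim.} Your opening sentence says $\Phi$ is \emph{not} invariant along orbits. That is false and contradicts the rest of your argument, since invariance is exactly what lets $\Phi$ descend to the quotient.

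\emph{Euler characteristic count.} Your count in the invariance check is wrong. Triangulate the cusp torus at $v$ with $V_v$ vertices, $E_v$ edges and $F_v$ triangles. Then $V_v-E_v+F_v=0$ and $3F_v=2E_v$ give $F_v=2V_v$. So the number of triangles is twice the number of \emph{vertices}, i.e.\ of edge ends $\sum_eA_{e,v}$, not twice the number of edges. With the correct count, under $\mathrm{Im}a_e\mapsto\mathrm{Im}a_e+sA_{e,v}$ the edge weights gain $e^{2\pi QsV_v}$ and the tetrahedra contribute $e^{-\pi QsF_v}$, so the two cancel. This count is genuinely needed. Propositions \ref{tetrasym} and \ref{prop: horosphere}, which the paper cites for $F(P\boldsymbol x)=F(O\boldsymbol x)$, do not by themselves account for the edge weights.

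\emph{Convergence.} You do not need Theorem \ref{converge}, which carries an angle-structure hypothesis absent from Lemma \ref{P}. Applying Tonelli to $|\Phi|$ against a nonnegative normalized cutoff shows that absolute convergence does not depend on the chosen transversal. That is all Fubini requires.
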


\begin{proof} 
We first observe that the orthogonal complement $X$ of $A(\mathbb R^V)$ is a special case of such $U.$ Let $O$ be the $|E|\times(|E|-|V|)$-matrix consisting of an orthonormal basis of $X$ as the columns, and let $[A,O]$ be the $|E|\times |E|$-matrix obtained by juxtaposing $A$ and $O.$ Then 
\begin{equation}\label{AO}
\big|\det[A,O]\big|=\sqrt{\det (A^TA)}.   
\end{equation}

Let $\{\boldsymbol p_i\}_{i\in \{1,\dots, |E|-|V|\}}$ be the columns of $P,$ and let $\{\boldsymbol o_i\}_{i\in \{1,\dots, |E|-|V|\}}$ be the columns of $O.$ 

We next consider the spacial case that for each $i\in \{1,\dots, |E|-|V|\},$ $\boldsymbol p_i=\boldsymbol o_i + \boldsymbol v_i$
for some vector $\boldsymbol v_i \in A(\mathbb R^V).$ In this case, as $P$ is obtained from $O$ by adding suitable linear combinations of columns of $A,$ we have 
\begin{equation}\label{OP}
\big|\det[A,P]\big|=\big|\det[A,O]\big|.
\end{equation}
For $\boldsymbol a\in \big(\frac{Q}{2}+\mathbf i\mathbb R\big)^E,$ let 
$$F(\boldsymbol a)= \bigg(\frac{1}{2\pi b}\bigg)^{|V|} \prod_{e\in E}|e|_{\boldsymbol a}\prod_{\Delta\in T} |\Delta|_{\boldsymbol a}.$$
Then for each $\boldsymbol x\in \mathbb R^{|E|-|V|},$ we have 
$P\cdot \boldsymbol x - O \cdot \boldsymbol x =[\boldsymbol v_1,\dots,\boldsymbol v_{|E|-|V|}]\cdot \boldsymbol x \in A(\mathbb R^V);$
and by Proposition \ref{tetrasym} and Proposition \ref{prop: horosphere}, we have 
\begin{equation}\label{F=}
    F(P\cdot \boldsymbol x)=F(O\cdot \boldsymbol x).
\end{equation}
Then by (\ref{b-TV2}), (\ref{AO}), (\ref{OP}) and (\ref{F=}), we have
\begin{equation}\label{sP}
 \begin{split}
   \mathrm{TV_b}(M,\mathcal T)= & \int _ X  F(\boldsymbol a)\big|\det[A,O]\big|dm_X(\boldsymbol a)\\
   = & \int _ {\mathbb R^{|E|-|V|}} F(O\cdot \boldsymbol x)\big|\det[A,O]\big|d\boldsymbol x\\
    = & \int _ {\mathbb R^{|E|-|V|}} F(P\cdot \boldsymbol x)\big|\det[A,P]\big|d\boldsymbol x
 = \int_U F( \boldsymbol a)\frac{\big|\det[A,P]\big|}{\sqrt{\det(P^TP)}}dm_U(\boldsymbol a).
 \end{split}   
\end{equation}

Finally, for the case that $P$ consists of a general basis of $U,$ we let $P'$ be an $|E|\times (|E|-|V|)$-matrix in the previous case. Then each column of $P$ is a linear combination of the column $P'.$ As a consequence, there is an invertible $(|E|-|V|)\times (|E|-|V|)$-matrix $G$ such that $P = P'\cdot G.$ Hence 
$$\det[A,P]=\det G\det[A,P']\quad\text{and}\quad \sqrt{\det(P^TP)}=\big|\det G\big|\sqrt{\det(P'^T P')},$$
and as a consequence
\begin{equation}\label{det=}
\frac{\big|\det[A,P]\big|}{\sqrt{\det(P^TP)}}=\frac{\big|\det[A,P']\big|}{\sqrt{\det(P'^TP')}}.
\end{equation}
Putting (\ref{sP}) and (\ref{det=}) together, we have 
$$   \mathrm{TV_b}(M,\mathcal T)= \int_U F( \boldsymbol a)\frac{\big|\det[A,P']\big|}{\sqrt{\det(P'^TP')}}dm_U(\boldsymbol a)= \int_U F( \boldsymbol a)\frac{\big|\det[A,P]\big|}{\sqrt{\det(P^TP)}}dm_U(\boldsymbol a).$$
\end{proof}

\begin{proof}[Proof of Theorem \ref{topinv}]
For $k\in \{1,2\},$ let $E_k$ and $T_k$ respectively be the sets of edges and tetrahedra in $\mathcal T_k,$ and let $e^*$ be the new edge produced by the $2$-$3$ Pachner Move so that $E_2=\{e^*\}\cup E_1.$ Let $\Delta_1,\Delta_2\in T_1$ and $\Delta_1^*,\Delta_2^*,\Delta_3^*\in T_2$ so that $T_2=\{\Delta_1^*,\Delta_2^*,\Delta_3^*\}\cup T_1 \setminus \{\Delta_1,\Delta_2\}.$ 

Let $\iota_1: \big(\frac{Q}{2}+\mathbf i\mathbb R\big)^{E_1}\to \big(\frac{Q}{2}+\mathbf i\mathbb R\big)^{E_2}$ be the inclusion  defined by $\iota_1(\boldsymbol a)=(\boldsymbol a, 0)$
 and let $\iota_2: \frac{Q}{2}+\mathbf i\mathbb R \to \big(\frac{Q}{2}+\mathbf i\mathbb R\big)^{E_2}$ be the inclusion  defined by $\iota_2(a)=(\boldsymbol 0,a).$ 
For $k\in\{1,2\},$ let $A_k$ be the $|E_k|\times |V|$ adjacency matrix of $\mathcal T_k.$ Then 
\begin{equation}\label{A2}
    A_2= \left[\begin{matrix}
 A_1 \\
 \boldsymbol r
\end{matrix}\right]
\end{equation} for some row  $\boldsymbol r$ with $|V|$ components coming from the edge $e^*.$ Let $X$ be the orthogonal complement of $A_1(\mathbb R^V)$ in $\big(\frac{Q}{2}+\mathbf i\mathbb R\big)^{E_1},$ and let $O$ be the $|E_1|\times(|E_1|-|V|)$-matrix consisting of an orthonormal basis $\{\boldsymbol o_i\}_{i\in\{1,\dots,|E_1|-|V|\}}.$
Let $U=\iota_1(X)+\iota_2\big(\frac{Q}{2}+\mathbf i\mathbb R\big).$ Then $U+A_2(\mathbb R^V)=\big(\frac{Q}{2}+\mathbf i\mathbb R\big)^{E_2}$ and $U\cap A_2(\mathbb R^V)=\{\boldsymbol 0\};$ and $\{\iota_1(\boldsymbol o_i)\}_{i\in \{1,\dots,|E_1|-|V|\}}\cup \{\iota_2\big(\frac{Q}{2}+\mathbf i\big)\}$ form a basis of $U.$ Let $P$ be the $|E_2|\times(|E_2|-|V|)$-matrix consisting of these basis vectors as the columns. Then $U$ and $P$ satisfy the conditions of Lemma \ref{P}, and
\begin{equation}\label{PO1}
P=\left[\begin{matrix}
    O & \boldsymbol 0\\
    \boldsymbol 0& 1
\end{matrix}\right],    
\end{equation}
where the the top-right $\boldsymbol 0$ is a column of $|E_1|$ components and the bottom-left $\boldsymbol 0$ is a row of $|E_1|-|V|$ components. As a consequence, we have  
\begin{equation}\label{detPTP}
\det(P^TP)=\det (O^TO)=1.
\end{equation}
Putting (\ref{A2}) and (\ref{PO1}) together, we have 
$$[A_2,P] = \left[\begin{matrix}
    A_1 & O & \boldsymbol 0\\
    \boldsymbol r & \boldsymbol 0 & 1
\end{matrix}  \right],$$
and hence 
\begin{equation}\label{detA2P}
    \big|\det [A_2,P]\big|= \Bigg|\det \left[\begin{matrix}
    A_1 & O & \boldsymbol 0\\
    \boldsymbol 0 & \boldsymbol 0 & 1
\end{matrix}  \right]\Bigg|=\big|\det [A_1, O]\big|=\sqrt{\det(A_1^TA_1)}.
\end{equation}

Now, by (\ref{b-TV2}), we have 
 \begin{equation*}
  \begin{split}
    \mathrm{TV_b}(M,\mathcal T_1)=& \Big(\frac{1}{2\pi b}\Big)^{|V|} \int _ X \prod_{e\in E_1} |e|_{\boldsymbol{a}}\prod_{\Delta\in T_1}|\Delta|_{\boldsymbol a}\sqrt{\det(A_1^T A_1)}dm_X(\boldsymbol a)\\
=& \Big(\frac{1}{2\pi b}\Big)^{|V|}  \int _ X \bigg(\int_{\frac{Q}{2}+\mathbf i\mathbb R}|T_b(2a_{e^*})|^2|\Delta_1^*|_{(\boldsymbol a,a_{e^*})}|\Delta_2^*|_{(\boldsymbol a,a_{e^*})}|\Delta_3^*|_{(\boldsymbol a,a_{e^*})}d\mathrm{Im}a_{e^*}\bigg)\\
&\quad\quad\quad\quad\quad\quad\quad\quad\quad\quad\quad\quad \prod_{e\in E_1} |e|_{\boldsymbol{a}}\prod_{\Delta\in T_1\setminus \{\Delta_1,\Delta_2\}}|\Delta|_{\boldsymbol a}\sqrt{\det(A_1^T A_1)}dm_X(\boldsymbol a)\\
=& \Big(\frac{1}{2\pi b}\Big)^{|V|}  \int _ {X\oplus \big(\frac{Q}{2}+\mathbf i\mathbb R\big)} \prod_{e\in E_2} |e|_{(\boldsymbol{a},a_{e^*})}\prod_{\Delta\in T_2}|\Delta|_{(\boldsymbol a,a_{e^*})}\sqrt{\det(A_1^T A_1)}dm_X(\boldsymbol a)d\mathrm{Im}a_{e^*}\\
=& \Big(\frac{1}{2\pi b}\Big)^{|V|}  \int _ U \prod_{e\in E_2} |e|_{\boldsymbol{a}}\prod_{\Delta\in T_2}|\Delta|_{\boldsymbol a}\sqrt{\det(A_1^T A_1)}dm_U(\boldsymbol a)\\
=& \Big(\frac{1}{2\pi b}\Big)^{|V|}  \int _ U \prod_{e\in E_2} |e|_{\boldsymbol{a}}\prod_{\Delta\in T_2}|\Delta|_{\boldsymbol a}\frac{\big|\det[A_2,P]\big|}{\sqrt{\det(P^T P)}}dm_U(\boldsymbol a) = \mathrm{TV}_b(M,\mathcal T_2),
\end{split}   
 \end{equation*}
where the second equality comes from Proposition \ref{32}, the third equlity comes from Fubini's Theorem, the penultimate equality comes from (\ref{detPTP}) and (\ref{detA2P}), and the last equality comes from Lemma \ref{P}.
\end{proof}


\section{Asymptotics of $\mathrm{TV}_b(M,\mathcal T)$}

Let $M$ be a hyperbolic $3$-manifold with cusps and  with an ideal triangulation $\mathcal T.$ Let $E$ and $T$ respectively be the sets of edges and tetrahedra of $\mathcal T.$ For $e\in E,$ let $x_e=-2\mathbf i\pi b\big( a_e-\frac{Q}{2}\big)$ so that $a_e=\frac{Q}{2}+\mathbf i \frac{x_e}{2\pi b}.$ For $\Delta\in T$ with edges $e_{\Delta,1},\dots,e_{\Delta,6},$ let $\boldsymbol x_\Delta=(x_{e_{\Delta,1}},\dots,x_{e_{\Delta,6}})$ and let $\zeta_\Delta=\pi b (2Q-u_\Delta)+\frac{\mathbf i}{2}\sum_{k=1}^6x_{e_{\Delta,k}}.$ Let $V_b\big(\boldsymbol x_\Delta,\zeta_\Delta\big)=V_{\boldsymbol x_\Delta,b}(\zeta_\Delta)$
be as defined in (\ref{Vxb}) and let $V\big(\boldsymbol x_\Delta,\zeta_\Delta\big)=V_{\boldsymbol x_\Delta}(\zeta_\Delta)$
be as defined in (\ref{Vx}). Let $V$ be the set of boundary components of $M,$ and recall again that the $|E|\times |V|$-matrix $A$ is the adjacency matrix of the edges and the boundary components given by $A_{e,v}=|e\cap v|,$ the number of points of intersections of the edge $e$ with the boundary component $v.$  The $\mathbb R^V$-action on $\mathbb R^E$ induced by the change of horoshperes is given by $\boldsymbol u\cdot \boldsymbol x = \boldsymbol x + A \boldsymbol u.$ We also consider $A$ as a linear map from $\mathbb R^V$ to $\mathbb R^E,$ and by abuse of notation let $X\subset \mathbb R^E$ be the orthogonal complement of $A(\mathbb R^V).$

Then by (\ref{b-TV2}),  we have
\begin{equation}\label{TVint}
\begin{split}
\mathrm{TV}_b(M,\mathcal T)= \frac{\sqrt{\det(A^TA)}}{2^{|E|}(\pi b )^{|E|+|T|}}
\int_{\Gamma} e^{\sum_{e\in E}x_e}\cdot\exp\Bigg(\frac{\mathcal V_b(\boldsymbol x, \boldsymbol \zeta)}{2\pi \mathbf i b^2}\Bigg)dm_X(\boldsymbol x)  d \boldsymbol \zeta,
\end{split}
\end{equation}
where the contour 
$$\Gamma=  \bigg\{ (\boldsymbol x,\boldsymbol \zeta)\in X \times \mathbb C^T   \ \bigg|\ \zeta_\Delta\in\Gamma_{\boldsymbol x_\Delta} \text{ for all }\Delta\text{ in }T\bigg\}$$
and $\Gamma_{\boldsymbol x_\Delta}$ is the contour of integral in (\ref{idealb6j}) that 
 will be specified later, $dm_X$ is the Lebesgue measure on $X$ obtained by restricting the Lebesgue measure $d\boldsymbol x=\prod _{e\in E}dx_e$ on $\mathbb R^E,$ $d\boldsymbol \zeta=\prod _{\Delta\in T}d\zeta_\Delta,$ and
\begin{equation}\label{Wnu}
\begin{split}
\mathcal V_b(\boldsymbol x, \boldsymbol \zeta)=&2\pi\mathbf i \sum_{e\in E}x_e+\sum_{\Delta\in T}V_b\big(\boldsymbol x_\Delta,\zeta_\Delta\big)\\
=&\mathcal V(\boldsymbol x, \boldsymbol \zeta)+\kappa(\boldsymbol x,\boldsymbol \zeta)b^2+\nu_b(\boldsymbol x,\boldsymbol \zeta)b^4
\end{split}
\end{equation}
with
\begin{equation}\label{calV}
    \mathcal V(\boldsymbol x, \boldsymbol \zeta)=2\pi \mathbf i \sum_{e\in E}x_e+\sum_{\Delta\in T}V\big(\boldsymbol x_\Delta,\zeta_\Delta\big),
\end{equation}

 $$\kappa(\boldsymbol x,\boldsymbol\zeta)=\sum_{\Delta\in T}\kappa_{\boldsymbol x_\Delta}(\zeta_\Delta)$$
 with $\kappa_{\boldsymbol x}$ as in (\ref{kappa}),  and $$\nu_b(\boldsymbol x,\boldsymbol\zeta)=\sum_{\Delta\in T}\nu_{\boldsymbol x_\Delta,b}(\zeta_\Delta)$$  with  $\nu_{\boldsymbol x,b}$  as in (\ref{nu}).

\subsection{Properties of relevant functions}

Suppose that $\mathcal T$ is geometric, and  $\boldsymbol x^*=(x_e^*)\in \mathbb R^E$ is a decorated edge-lengths vector of $(M,\mathcal T).$ By a change of horospheres if necessary, we may assume that $\boldsymbol x^* \in X.$ 

For $\Delta\in T$ with edges $e_{\Delta,1},\dots,e_{\Delta,6},$ let 
 $\boldsymbol x^*_{\Delta}=(x^*_{e_{\Delta,1}},\dots,x^*_{e_{\Delta,6}}).$ 
Guaranteed by Proposition \ref{critical2}, 
we let $\zeta^*_{\Delta}$ be the unique critical point of $V_{\boldsymbol x^*_{\Delta}}$ in 
$$D=\Big\{ \zeta\in\mathbb C\ \Big|\ 0 < \mathrm{Re}\zeta < \frac{\pi}{6}\Big\}.$$ 
Then we have the following

\begin{proposition}\label{cp} 
The function 
$$\mathcal V(\boldsymbol x, \boldsymbol \zeta)=2\pi\mathbf i\sum_{e\in E} x_e+\sum_{\Delta\in T}V\big(\boldsymbol x_\Delta,\zeta_\Delta\big)$$  has a critical point 
$$\boldsymbol z^*=(\boldsymbol x^* ,\boldsymbol \zeta^* )=\Big(\big(x^*_e\big)_{e\in E},\big(\zeta^*_{\Delta}\big)_{\Delta\in T}\Big)$$
in $X\times D^T$
with the critical value
$$\mathcal V(\boldsymbol z^*)=-2\mathbf i\mathrm{Vol}(M).$$ 
\end{proposition}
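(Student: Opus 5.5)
The plan is to check the critical point equations in the $\boldsymbol\zeta$-directions and the $X$-directions separately, and then to evaluate the critical value tetrahedron by tetrahedron.

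\emph{The $\boldsymbol\zeta$-directions.} The function $\mathcal V$ is a sum of $2\pi\mathbf i\sum_e x_e$ and the functions $V(\boldsymbol x_\Delta,\zeta_\Delta)$, and each $\zeta_\Delta$ enters only its own summand. Hence $\partial\mathcal V/\partial\zeta_\Delta=\frac{dV_{\boldsymbol x_\Delta}}{d\zeta}(\zeta_\Delta)$. This vanishes at $\zeta^*_\Delta$ by Proposition \ref{critical2}, which applies because $\mathcal T$ is geometric. Indeed, the decorated edge lengths $\boldsymbol x^*_\Delta$ are then the edge lengths of a genuine decorated ideal hyperbolic tetrahedron $\Delta$, namely the geometric tetrahedron in $\mathcal T$ with the horospheres induced from a choice of cusp cross-sections of $M$.

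\emph{The $X$-directions.} For the $\boldsymbol x$-derivatives I will use the formula established in the proof of Proposition \ref{critical2}, which gives $\partial W/\partial x_k=-\mathbf i\theta_k$ for $W(\boldsymbol x)=V(\boldsymbol x,\zeta^*(\boldsymbol x))$. At a critical $\zeta$ the chain-rule term vanishes, so $\frac{\partial V}{\partial x_k}(\boldsymbol x^*_\Delta,\zeta^*_\Delta)=-\mathbf i\theta_{(\Delta,e_k)}$, where $\theta_{(\Delta,e_k)}$ is the dihedral angle of the geometric tetrahedron $\Delta$ at $e_k$. Summing over tetrahedra gives, for each edge $e$,
$$\frac{\partial\mathcal V}{\partial x_e}(\boldsymbol z^*)=2\pi\mathbf i-\mathbf i\sum_{\Delta\sim e}\theta_{(\Delta,e)}=2\pi\mathbf i-2\pi\mathbf i=0,$$
since the dihedral angles of a geometric triangulation sum to $2\pi$ around every edge. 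The full gradient in $\mathbb R^E$ therefore vanishes, and in particular its restriction to $X$ does. Since $\boldsymbol x^*\in X$ after a change of horospheres, $\boldsymbol z^*$ lies in $X\times D^T$ and is a critical point there.

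\emph{The critical value.} Proposition \ref{critical2} gives $V(\boldsymbol x^*_\Delta,\zeta^*_\Delta)=-2\mathbf i\,\mathrm{Cov}(\Delta)=-2\mathbf i\,\mathrm{Vol}(\Delta)-\mathbf i\sum_{k}\theta_{(\Delta,e_k)}x^*_{e_k}$. Summing over $\Delta$ and regrouping the second terms by edges yields $-\mathbf i\sum_e\big(\sum_{\Delta\sim e}\theta_{(\Delta,e)}\big)x^*_e=-2\pi\mathbf i\sum_e x^*_e$. This exactly cancels the term $2\pi\mathbf i\sum_e x^*_e$ in $\mathcal V$, leaving
$$\mathcal V(\boldsymbol z^*)=-2\mathbf i\sum_\Delta\mathrm{Vol}(\Delta)=-2\mathbf i\,\mathrm{Vol}(M).$$

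\emph{Main obstacle.} The delicate point is making sure that the edge-length derivative $\partial V/\partial x_k=-\mathbf i\theta_k$ holds exactly as needed for the geometric tetrahedra, rather than only modulo $2\pi\mathbf i$ or with a different labeling convention. This requires checking that the identification $(x_1,\dots,x_6)=(x_{12},\dots,x_{14})$ matches the opposite-edge convention used in the triangulation. Proposition \ref{tetrasym} supplies this compatibility. I also need that the branch choices fixed in Proposition \ref{critical2}, through continuity from $\boldsymbol x=0$, persist for all geometric tetrahedra. The rest is bookkeeping.
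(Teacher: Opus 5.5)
Your proposal is correct and follows the paper's proof essentially step for step. The $\zeta_\Delta$-derivatives vanish by Proposition \ref{critical2}. The chain rule together with \eqref{co-sch2} gives $\partial\mathcal V/\partial x_e=\mathbf i\big(2\pi-\sum_{\Delta\sim e}\theta_{(\Delta,e)}\big)=0$ on all of $\mathbb R^E$, hence on $X$. Finally, the $\theta x$ terms of the co-volumes cancel $2\pi\mathbf i\sum_e x^*_e$, leaving $-2\mathbf i\,\mathrm{Vol}(M)$.

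One small quibble: the labeling compatibility you attribute to Proposition \ref{tetrasym} actually comes directly from the fact that $V_{\boldsymbol x}$ depends only, and symmetrically, on $y_i=(x_i+x_{i+3})/2$. This gives $\partial V/\partial x_k=\partial V/\partial x_{k+3}$, which matches $\theta_k=\theta_{k+3}$, so the point is cosmetic.
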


\begin{proof}  For $\Delta\in T$ with edges $e_{\Delta,1},\dots,e_{\Delta,6},$ let $x^*_{ \Delta}=(x^*_{ e_{\Delta,1}},\dots,x^*_{ e_{\Delta,6}}).$ 

We first have
\begin{equation}\label{xi=0b}
\frac{\partial  \mathcal V}{\partial \zeta_\Delta}\Big|_{(\boldsymbol x^* ,\boldsymbol \zeta^* )}=\frac{\partial V_{\boldsymbol x^*_{ \Delta}}(\zeta_\Delta)}{\partial \zeta_\Delta}\Big|_{\zeta^*_{ \Delta}}=0.
\end{equation}
Next,  let $W(\boldsymbol x_\Delta)=V(\boldsymbol x_\Delta,\zeta^*_\Delta)=V_{\boldsymbol x_\Delta}(\zeta^*_\Delta)$ be the function defined in (\ref{W}). Then for any edge $e$ of $\Delta,$ 
\begin{equation*}
\begin{split}
\frac{\partial W(\boldsymbol x_\Delta)}{\partial x_e}\Big|_{\boldsymbol x^*_{ \Delta}}=&\frac{\partial V(\boldsymbol x_\Delta,\zeta_\Delta)}{\partial x_e}\Big|_{(\boldsymbol x^*_{ \Delta},\zeta^*_{ \Delta})}+\frac{\partial V(\boldsymbol x_\Delta,\zeta_\Delta)}{\partial \zeta_\Delta}\Big|_{(\boldsymbol x^*_{ \Delta},\zeta^*_{ \Delta})}\cdot\frac{\partial \zeta^*_\Delta}{\partial x_e}\Big|_{\boldsymbol x^*_{ \Delta}}\\
=&\frac{\partial V(\boldsymbol x_\Delta,\zeta_\Delta)}{\partial x_e}\Big|_{(\boldsymbol x^*_{ \Delta},\zeta^*_{ \Delta})}.
\end{split}
\end{equation*}
As a consequence, by \eqref{co-sch2}, we have
\begin{equation*}
\frac{\partial V(\boldsymbol x_\Delta,\zeta_\Delta)}{\partial x_e}\Big|_{(\boldsymbol x^*_{ \Delta},\zeta^*_{ \Delta})}=
\frac{\partial W(\boldsymbol x_\Delta)}{\partial x_e}\Big|_{\boldsymbol x^*_{ \Delta}}=-\mathbf i \theta_{(\Delta,e)},
\end{equation*}
where
$\theta_{(\Delta,e)}$ is the  dihedral angle of the $\boldsymbol x^*_{ \Delta}$ at the edge $e.$ Then for each $e\in E,$ 
\begin{equation}\label{alpha=0b}
\frac{\partial  \mathcal V}{\partial x_e}\Big|_{(\boldsymbol x^* ,\boldsymbol \zeta^* )}=2\pi\mathbf i+ \sum_{\Delta\in T}\frac{\partial V(\boldsymbol x_\Delta,\zeta_\Delta)}{\partial x_e}\Big|_{(\boldsymbol x^*_{ \Delta},\zeta^*_{ \Delta})}=\mathbf i\Big(2\pi-\sum_{\Delta\sim e} \theta_{(\Delta,e)}\Big)=0,
\end{equation}
where the last summation is over all the corners around $e,$ and the last equality comes from the fact that the sum of the dihedral angles around an edge equals $2\pi.$ By (\ref{xi=0b}) and (\ref{alpha=0b}), $(\boldsymbol x^* ,\boldsymbol \zeta^* )$ is a critical point of $ \mathcal V$ in $\mathbb R^E\times D^T,$ hence is a critical point of $\mathcal V$ in $X\times D^T.$

 Finally, for the critical value, by Proposition \ref{critical2}, we have
\begin{equation*}
\begin{split}
\mathcal V(\boldsymbol x^*,\boldsymbol \zeta^*)=&2\pi \mathbf i \sum_{e\in E} x^*_e-2\mathbf i\sum_{\Delta\in T}\Big(\mathrm{Vol}(\Delta)+\sum_{e\sim \Delta}\frac{\theta_{(\Delta, e)}x^*_e}{2}\Big)\\
=& -2\mathbf i\sum_{\Delta\in T}\mathrm{Vol}(\Delta)+\mathbf i\sum_{e\in E} \Big(2\pi -\sum_{\Delta\sim e} \theta_{(\Delta,e)}\Big)x^*_e\\
=& -2\mathbf i\mathrm{Vol}(M),
\end{split}
\end{equation*}
where the last summation in the first row is over all the edges $e$ in $\Delta,$ and the last summation in the second row is over all the corners around $e.$ 
\end{proof}

In the rest of this section, we will let 
$$\Gamma^* = X\times \prod_{\Delta\in T} \Gamma^*_\Delta,$$ where
$\Gamma^*_\Delta = \Big\{ \zeta\in D \ \Big|\ \mathrm{Re}\zeta=\mathrm{Re}\zeta^*_\Delta \Big\}$
 for each $\Delta\in T.$ Then 
  $$\boldsymbol z^*=(\boldsymbol x^*,\boldsymbol \zeta^*)\in \Gamma^*.$$

\begin{proposition}\label{cd}
The function $\mathrm{Im}\mathcal V$ is strictly concave down on $\Gamma^*,$ hence achieves the absolute maximum at the critical point $\boldsymbol z^*.$
  \end{proposition}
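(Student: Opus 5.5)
The plan is to compute the real Hessian of $\mathrm{Im}\mathcal V$ along $\Gamma^*$ directly, show it is negative semidefinite, and use Lemma \ref{inj} to show it has trivial kernel. The natural real coordinates on $\Gamma^*$ are $\boldsymbol x\in X$ together with $s_\Delta=\mathrm{Im}\zeta_\Delta$ for each $\Delta\in T$. The real parts $c_\Delta=\mathrm{Re}\zeta_\Delta=\mathrm{Re}\zeta^*_\Delta\in(0,\frac{\pi}{12}]$ are fixed constants, independent of $\boldsymbol x$.

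\textbf{Step 1: isolate the nonlinear part.} The term $2\pi\mathbf i\sum_e x_e$ contributes the linear function $2\pi\sum_e x_e$ to $\mathrm{Im}\mathcal V$. For each $\Delta$, I would go through (\ref{Vx}) with $\zeta=c_\Delta+\mathbf i s_\Delta$ and the $y_i$ real.
\begin{itemize}
\item $\sum y_i^2-2\sum y_iy_j$ is real, so it contributes nothing.
\item $-\pi\mathbf i\sum y_i$ contributes a linear term.
\item $\mathrm{Im}(3\zeta^2)=6c_\Delta s_\Delta$ is linear.
\item $\mathrm{Im}\big((\pi-2\mathbf i\sum y_i)\zeta\big)=\pi s_\Delta-2c_\Delta\sum y_i$ is linear.
\end{itemize}
Hence, up to an affine function of $(\boldsymbol x,\boldsymbol s)$,
$$\mathrm{Im}\mathcal V=\sum_{\Delta\in T}\sum_{i=1}^3 g_{c_\Delta}\big(s_\Delta-y_{\Delta,i}\big),\qquad g_c(l)\doteq\mathrm{Im}L(c+\mathbf i l).$$
Here $y_{\Delta,i}=\frac{1}{2}(x_{e_{\Delta,i}}+x_{e_{\Delta,i+3}})$.

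\textbf{Step 2: each summand is strictly concave.} The computation behind (\ref{2nd}) gives
$$g_c''(l)=-\frac{\sin 2c}{\cosh^2 l-\cos^2 c},$$
which is strictly negative because $c\in(0,\frac{\pi}{6})$. Therefore the Hessian of $\mathrm{Im}\mathcal V$ in a direction $(d\boldsymbol x,d\boldsymbol s)$ tangent to $\Gamma^*$ equals
$$\sum_{\Delta}\sum_{i}g''_{c_\Delta}\,(ds_\Delta-dy_{\Delta,i})^2\leqslant 0.$$
Equality holds only if $dy_{\Delta,1}=dy_{\Delta,2}=dy_{\Delta,3}=ds_\Delta$ for every $\Delta$.

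\textbf{Step 3: the kernel is trivial.} This is the key point. Suppose equality holds in Step 2. Since $b_{\Delta,j}=2y_{\Delta,j}-\frac{2}{3}\sum_i y_{\Delta,i}$ (in the $x$-variables, up to the harmless factor $2\pi b$), every $b_{\Delta,j}(d\boldsymbol x)$ vanishes, so $d\boldsymbol x\in\ker B$. Lemma \ref{inj} says $B|_X$ is injective, and $d\boldsymbol x\in X$, so $d\boldsymbol x=0$. Then $ds_\Delta=dy_{\Delta,i}=0$ as well. Hence the Hessian is negative definite at every point of $\Gamma^*$, and $\mathrm{Im}\mathcal V$ is strictly concave on the convex set $\Gamma^*$.

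\textbf{Step 4: the maximum.} By Proposition \ref{cp}, $\boldsymbol z^*\in\Gamma^*$ is a holomorphic critical point of $\mathcal V$. Consequently all real directional derivatives of $\mathrm{Im}\mathcal V$ along $\Gamma^*$ vanish there, so $\boldsymbol z^*$ is a critical point of a strictly concave function. It is therefore the unique absolute maximum on $\Gamma^*$.

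I expect no serious obstacle beyond the linearity bookkeeping in Step 1. In particular one must check that the mixed term $2s_\Delta\sum y_i$ coming from $-2\mathbf i\sum y_i\cdot\mathbf i s_\Delta$ is real, so it drops out of the imaginary part.
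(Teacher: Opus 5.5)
Your proposal is correct and follows essentially the same route as the paper: you split off the affine part of $\mathrm{Im}\mathcal V$, use the strict negativity of the second derivative of $\mathrm{Im}L$ in the imaginary direction, and invoke Lemma \ref{inj} to rule out degenerate directions. The only difference is packaging: you identify the kernel of the quadratic form directly, whereas the paper factors $\mathrm{Im}\mathcal N$ through the injective linear map $S$ and checks, tetrahedron by tetrahedron, that the Hessian $K^TDK$ is negative definite.
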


\begin{proof}
For $(\boldsymbol x,\boldsymbol \zeta)\in \Gamma^*=X\times \prod_{\Delta\in T}\Gamma_\Delta^*,$ we let 
$$\mathcal L(\boldsymbol x,\boldsymbol \zeta) = 2\pi \mathbf i \sum_{e\in E}x_e+\sum_{\Delta\in T}\Big(V(\boldsymbol x_\Delta, \zeta_\Delta)-\sum_{i=1}^3L(\zeta_\Delta-\mathbf i y_{\Delta, i})\Big)$$
and let 
$$\mathcal N(\boldsymbol x,\boldsymbol \zeta) = \sum_{\Delta\in T}\sum_{i=1}^3 L(\zeta_\Delta-\mathbf i y_{\Delta, i}),$$
where for each $\Delta\in T$ and $\boldsymbol x_\Delta = (x_{\Delta,1},\dots,x_{\Delta,6})\in \mathbb R^6,$ $y_{\Delta,i}=\frac{x_{\Delta,i}+x_{\Delta,i+3}}{2}$ for each $i\in\{1,2,3\}.$ Then by (\ref{calV}), we have 
$$\mathcal V(\boldsymbol x,\boldsymbol \zeta) = \mathcal L(\boldsymbol x,\boldsymbol \zeta) + \mathcal N(\boldsymbol x,\boldsymbol \zeta). $$

By (\ref{Vx}),  $\mathrm{Im}\mathcal L(\boldsymbol x,\boldsymbol \zeta)$ is a linear function on $\Gamma^*.$ 
\medskip

Therefore,  it suffices to show that $\mathrm{Im}\mathcal N(\boldsymbol x,\boldsymbol \zeta)$ is strictly concave down  on $\Gamma^*.$ 
To this end, for each $\Delta\in T,$ we let 
$$\xi_\Delta = \zeta_\Delta - \frac{\mathbf i}{6}\sum_{k=1}^6x_{\Delta,k};$$ and let 
$\boldsymbol s_\Delta= (s_{\Delta,1},s_{\Delta,2},s_{\Delta,3}),$ 
where for each $i\in\{1,2,3\},$ 
$$s_{\Delta,i}=y_{\Delta,i}-\frac{1}{6}\sum_{k=1}^6x_{\Delta,k}.$$
Then we have 
$s_{\Delta,1}+s_{\Delta,2}+s_{\Delta,3}=0.$ 
Let $F=\big\{(s_1,s_2,s_3)\in\mathbb R^3\ \big|\ s_1+s_2+s_3=0\big\}.$ We define the linear map 
$$S: \Gamma^* \to \prod_{\Delta \in T} \big(F\times \Gamma_\Delta^*\big)$$
by 
$$S(\boldsymbol x,\boldsymbol \zeta)\doteq (\boldsymbol s_\Delta, \xi_\Delta)_{\Delta\in T};$$
and define the function $\mathcal U$ on $\prod_{\Delta \in T} \big(F\times \Gamma_\Delta^*\big)$ by
$$\mathcal U\Big((\boldsymbol s_\Delta,\xi_\Delta)_{\Delta\in T}\Big)\doteq \sum_{\Delta\in T}\sum_{i=1}^3L(\xi_\Delta-\mathbf i s_{\Delta,i}).$$
Then, as $\zeta_\Delta-\mathbf i y_{\Delta,i}=\xi_\Delta-\mathbf i s_{\Delta,i}$ for each $\Delta\in T$ and $i\in\{1,2,3\},$
we have 
$$\mathcal N(\boldsymbol x,\boldsymbol \zeta)= \mathcal U \circ S (\boldsymbol x,\boldsymbol \zeta)$$
for each $(\boldsymbol x,\boldsymbol \zeta)\in \Gamma^*.$ As a consequence, to show that $\mathrm{Im}\mathcal N$ is strictly concave down on $\Gamma^*,$ it suffices to show that:
\begin{enumerate}[(1)]
    \item the linear map 
$S: \Gamma^* \to \prod_{\Delta \in T} \big(F\times \Gamma_\Delta^*\big)$ is injective, and

    \item  $\mathrm{Im}\mathcal U$ is strictly concave down on $\prod_{\Delta\in T} \big(F\times \Gamma_\Delta^*\big).$
\end{enumerate}

For (1), let $B$ be the linear map in Lemma \ref{inj}. Then under the identification of $\big(\frac{Q}{2}+\mathbf i \mathbb R\big)^E$ with $\mathbb R^E$ that identifies  $\boldsymbol a$ with $\mathrm{Im}\boldsymbol a,$ the identification of $\prod_{\Delta\in T}\Gamma_\Delta^*$ with $\mathbb R^T$ that identifies $\boldsymbol \zeta$ with $\mathrm{Im}\boldsymbol \zeta,$
and the identification of $\prod_{\Delta\in T}\big(F\times \Gamma_\Delta^*\big)$ with $F^T\times \mathbb R^T$  that identifies $\big((\boldsymbol s_\Delta,\xi_\Delta)_{\Delta\in T}\big)$ with $\big((\boldsymbol s_\Delta)_{\Delta\in T},(\mathrm{Im}\xi_\Delta)_{\Delta\in T}\big),$ the linear map 
 $S: X\times \mathbb R^E \to F^T\times \mathbb R^E$ is blocked  with the top-left block the restriction of $\frac{1}{2}B$ on $X,$ the top-right block the zero matrix and the bottom-right block the $|T|\times |T|$ identity matrix. Then by Lemma \ref{inj}, $S$ is injective. 
\medskip

For (2), it suffices to show that for each $\Delta\in T,$ the function $\sum_{i=1}^3\mathrm{Im}L(\xi_\Delta-\mathbf i s_{\Delta,i})$ is strictly concave down on $F \times \Gamma_\Delta^*,$ which is equivalent to showing that the function 
$$U\big(s_{\Delta,1},s_{\Delta,2},\mathrm{Im}\zeta_\Delta\big)\doteq \mathrm{Im}L\big(\xi_\Delta-\mathbf i s_{\Delta,1}\big)+\mathrm{Im}L\big(\xi_\Delta-\mathbf i s_{\Delta,2}\big)+\mathrm{Im}L\big(\xi_\Delta+\mathbf i (s_{\Delta,1}+s_{\Delta,2})\big)$$ is strictly concave down on $\mathbb R^3.$ By a direct computation, we have 
that the Hessian matrix  
$$\mathrm{Hess}U= K^T \cdot D \cdot K,$$
where 
$$K=\left[ \begin{matrix} 
    1 & 0 & -1 \\
    0 & 1 & -1 \\
    1 & 1 & 1 \\
\end{matrix}
\right] $$
is an invertible matrix, and $D$ is the following diagonal matrix
$$\left[ \begin{matrix} 
    \frac{-2\sin (2\mathrm{Re}\zeta_\Delta)}{\cosh (2\mathrm{Im}\zeta_\Delta-2s_{\Delta,1})-\cos (2\mathrm{Re}\zeta_\Delta)} & 0 & 0 \\
    0 &  \frac{-2\sin (2\mathrm{Re}\zeta_\Delta)}{\cosh (2\mathrm{Im}\zeta_\Delta-2s_{\Delta,2})-\cos (2\mathrm{Re}\zeta_\Delta)} & 0 \\
    0 & 0 &  \frac{-2\sin (2\mathrm{Re}\zeta_\Delta)}{\cosh (2\mathrm{Im}\zeta_\Delta+2s_{\Delta,1}+2s_{\Delta,2})-\cos (2\mathrm{Re}\zeta_\Delta)} \\
\end{matrix}
\right].$$
Then by Proposition \ref{critical2} that $\mathrm{Re}\zeta_\Delta = \mathrm{Re}\zeta_\Delta^*\in \big(0, \frac{\pi}{12}\big],$  all the diagonal entries above are negative. As a consequence, $\mathrm{Hess}U$ is negative definite, and $U$ is strictly concave down on $\mathbb R^3.$
\end{proof}

\begin{proposition}\label{nond}
The critical point  $\boldsymbol z^*$ of $\mathcal V$ is non-degenerate.
\end{proposition}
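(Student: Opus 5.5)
The plan is to deduce non-degeneracy directly from the strict concavity in Proposition \ref{cd}, exploiting holomorphy of $\mathcal V$. Parametrize the real affine space $\Gamma^*=X\times\prod_{\Delta\in T}\Gamma^*_\Delta$ by the real coordinates $(\boldsymbol x,\mathrm{Im}\boldsymbol\zeta)$. The complex variables $(\boldsymbol x,\boldsymbol\zeta)$ restricted to $\Gamma^*$ are then $\boldsymbol x$ (real directions) and $\mathrm{Re}\zeta^*_\Delta+\mathbf i\,\mathrm{Im}\zeta_\Delta$ (imaginary directions). In other words, $\Gamma^*$ is a totally real affine subspace of real dimension $(|E|-|V|)+|T|$ inside the complex affine space $X_{\mathbb C}\times\mathbb C^T$, whose complex dimension is the same. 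Here $X_{\mathbb C}=X\otimes\mathbb C$, and $\mathcal V$ extends holomorphically in $\boldsymbol x$ near $\boldsymbol x^*$.

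First I will write the holomorphic Hessian $H$ of $\mathcal V$ at $\boldsymbol z^*$ with respect to a complex basis adapted to $\Gamma^*$: a real orthonormal basis of $X$ for the $\boldsymbol x$-directions, and the vectors $\mathbf i\,\partial/\partial\zeta_\Delta$ for the $\boldsymbol\zeta$-directions. Since $\mathcal V$ is holomorphic, its second real derivatives along $\Gamma^*$ in these directions equal $H'=J^T H J$, where $J=\mathrm{diag}(I,\mathbf iI)$. Thus the real Hessian of $\mathrm{Re}\mathcal V$ along $\Gamma^*$ is $\mathrm{Re}H'$, and that of $\mathrm{Im}\mathcal V$ is $\mathrm{Im}H'$.

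Next I will show that $\mathrm{Im}H'$ is negative definite. In the proof of Proposition \ref{cd}, $\mathrm{Im}\mathcal V=\mathrm{Im}\mathcal L+\mathrm{Im}\mathcal U\circ S$, with $\mathrm{Im}\mathcal L$ linear on $\Gamma^*$. The Hessian of $\mathrm{Im}\mathcal U$ is $\bigoplus_\Delta K^TD_\Delta K$, which is negative definite because each $D_\Delta$ has negative diagonal entries at $\boldsymbol z^*$. Together with the injectivity of $S$ from Lemma \ref{inj}, this gives $\mathrm{Hess}(\mathrm{Im}\mathcal V|_{\Gamma^*})(\boldsymbol z^*)=S^T(\bigoplus_\Delta K^TD_\Delta K)S\prec0$. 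Strict concavity alone would not suffice here, since it allows a degenerate Hessian, which is why I use this explicit negative definiteness.

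Finally, a complex symmetric matrix $H'$ whose imaginary part is definite is invertible. Indeed, if $H'v=0$ for some $v\in\mathbb C^n$, then $\bar v^TH'v=0$. Writing $H'=R+\mathbf iI_m$ with $R,I_m$ real symmetric, both $\bar v^TRv$ and $\bar v^TI_mv$ are real, so $\bar v^TI_mv=0$, and hence $v=0$. Therefore $\det H'\neq0$. Since $\det H=\det H'/\det(J)^2$ with $\det J\neq0$, the critical point $\boldsymbol z^*$ is non-degenerate in $X\times D^T$.

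The main obstacle I anticipate is bookkeeping rather than any new idea. The care is needed in checking that the Hessian computed in Proposition \ref{cd} is at the critical point itself and restricted to the correct linear subspace, namely $X$ rather than $\mathbb R^E$. Since $X\cap\ker B=0$, the restriction of $S$ remains injective, so negative definiteness survives the restriction to $X$.
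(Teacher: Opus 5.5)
Your proposal is correct and follows the same route as the paper, which deduces non-degeneracy from Proposition \ref{cd} together with the fact, cited from \cite{L}, that a complex symmetric matrix with negative definite imaginary part is invertible. You also supply the short proof of that matrix fact. In addition, you rightly point out that what is actually needed is the negative definiteness of $S^T\big(\bigoplus_\Delta K^TD_\Delta K\big)S$ established inside the proof of Proposition \ref{cd}, rather than the strict concavity stated in that proposition.
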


\begin{proof}
This is  a consequence of Proposition \ref{cd} and \cite{L} that if the imaginary part of a complex symmetric matrix is negative definite, then the matrix is invertible. 
\end{proof}

\begin{proposition}\label{bound2} 

\begin{enumerate}[(1)]
\item  There exists a constant $K>0$ such that, for all unit vector 
$\mathbf u$ in  the tangent space of $\Gamma^*$ at each of its points, 
 the directional derivative 
$$D_{\mathbf u}\mathrm{Im}\kappa(\boldsymbol x,\boldsymbol \zeta)<K$$
for all for $(\boldsymbol x,\boldsymbol \zeta)$ in $\Gamma^*.$

\item Let $\mathcal D$ be the $\delta$-neighborhood of $\Gamma^*$ in $\mathbb C^E\times \mathbb C^T$ for some $\delta>0.$ Then for $b>0$ sufficiently small,  there exists a constant $N>0$ independent of $b$ such that 
$$\mathrm{Im}\mathcal \nu_b(\boldsymbol x, \boldsymbol \zeta)\leqslant \big|\nu_b(\boldsymbol x, \boldsymbol \zeta)\big|<N$$
for all $(\boldsymbol x,\boldsymbol \zeta)\in  \mathcal  D.$  
\end{enumerate}
\end{proposition}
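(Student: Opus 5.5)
The plan is to reduce both parts to the single-tetrahedron estimates of Proposition \ref{bound}. By construction, $\kappa(\boldsymbol x,\boldsymbol\zeta)=\sum_{\Delta\in T}\kappa_{\boldsymbol x_\Delta}(\zeta_\Delta)$ and $\nu_b(\boldsymbol x,\boldsymbol\zeta)=\sum_{\Delta\in T}\nu_{\boldsymbol x_\Delta,b}(\zeta_\Delta)$. Each summand depends only on the variables of one tetrahedron, and only through the three combinations $\zeta_\Delta-\mathbf i y_{\Delta,i}$ (together with $\zeta_\Delta$ itself, in the linear and quadratic terms of $\nu$).

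For (1), I would compute the directional derivative explicitly. From (\ref{kappa}),
$$\mathrm{Im}\kappa_{\boldsymbol x}(\zeta)=-2\pi\mathrm{Im}\zeta-\pi\sum_{i=1}^3\mathrm{Re}\log\big(1-e^{2\mathbf i(\zeta-\mathbf i y_i)}\big).$$
Along $\Gamma^*$, $\mathrm{Re}\zeta_\Delta=\mathrm{Re}\zeta^*_\Delta\in(0,\frac{\pi}{12}]$ is fixed, so only $\mathrm{Im}\zeta_\Delta$ and the real $x_e$ vary. Writing $w_i=\mathrm{Im}\zeta_\Delta-y_{\Delta,i}$, we have $|e^{2\mathbf i(\zeta-\mathbf iy_i)}|=e^{-2w_i}$ and argument $2\mathrm{Re}\zeta^*_\Delta$. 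Then
$$\frac{\partial}{\partial w_i}\mathrm{Re}\log\big(1-e^{2\mathbf i\mathrm{Re}\zeta^*_\Delta}e^{-2w_i}\big)$$
is a bounded function of $w_i\in\mathbb R$. It tends to $0$ as $w_i\to+\infty$ and to $2$ as $w_i\to-\infty$, and it is continuous because $1-e^{2\mathbf i\mathrm{Re}\zeta^*_\Delta}e^{-2w_i}\neq 0$, since $\mathrm{Re}\zeta^*_\Delta\in(0,\frac{\pi}{12}]$. This is exactly the content of Proposition \ref{bound}(1) with $\delta$ small enough that $\mathrm{Re}\zeta^*_\Delta\in[\delta,\frac{\pi}{6}-\delta]$ for every $\Delta$.

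Since $w_i$ is a linear function of the coordinates on $\Gamma^*$ with coefficients of absolute value at most $1$, the chain rule gives $|D_{\mathbf u}\mathrm{Im}\kappa|\leqslant\sum_{\Delta}\big(2\pi+3\pi\cdot C\big)\|\mathbf u\|$ with $C$ a uniform bound. This yields $K$ independent of the point and of the unit vector.

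For (2), the key point is that $\nu_{\boldsymbol x,b}$ from (\ref{nu}) extends holomorphically in $(\boldsymbol x,\zeta)$ to complex $\boldsymbol x$. This holds because $V_{\boldsymbol x,b}$, $V_{\boldsymbol x}$ and $\kappa_{\boldsymbol x}$ depend on $\boldsymbol x$ only through polynomial terms in the $y_i$, which cancel exactly in the combination (\ref{nu}), and through $\zeta-\mathbf i y_i$.

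Concretely, $\nu_{\boldsymbol x,b}(\zeta)$ equals an explicit polynomial in $b^2$ and $\zeta$, coming from the $b^2$ and $b^4$ corrections in (\ref{Vxb}), plus
$$\sum_{i=1}^3\frac{2\pi\mathbf i b^2\log S_b\big(\frac{w}{\pi b}\big)-L(w)-\big(\text{the } b^2\text{-part of }\kappa\big)(w)\,b^2}{b^4}\bigg|_{w=\zeta-\mathbf i y_i}.$$
For $\boldsymbol z$ in the $\delta$-neighborhood $\mathcal D$ of $\Gamma^*$ (with $\delta$ small), $\mathrm{Re}(\zeta_\Delta-\mathbf i y_{\Delta,i})$ lies in a compact subinterval of $(0,\frac{\pi}{6})$. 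This is because $\mathrm{Re}\zeta_\Delta$ is within $\delta$ of $\mathrm{Re}\zeta^*_\Delta\in(0,\frac{\pi}{12}]$, and the complex $y$-perturbation shifts the real part by at most $\delta$.

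So each bracketed term is the one-variable remainder controlled uniformly on the strip $\delta'\leqslant\mathrm{Re}w\leqslant\frac{\pi}{6}-\delta'$ by Proposition \ref{bound}(2), i.e.\ by \cite[Proposition 3.11]{LMSWY}, which gives the bound uniformly in $\mathrm{Im}w$. The polynomial part in $\zeta$ is a problem, since it is unbounded as $\mathrm{Im}\zeta\to\infty$. I expect it to cancel, as it does in the single-tetrahedron case where the same bound is asserted for all $\zeta\in D_\delta$. I would verify this by checking that (\ref{Vxb}) and (\ref{Vx}) plus $\kappa_{\boldsymbol x}b^2$ agree exactly in all non-$S_b$ terms; the $-\frac{\pi^2}{6}b^4$ and $\pi b^2\zeta$-type terms are absorbed by the definition of $\kappa$.

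Summing over the $|T|$ tetrahedra then gives $N=|T|\cdot N_{\delta'}$. The inequality $\mathrm{Im}\nu_b\leqslant|\nu_b|$ is trivial.

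The main obstacle is the uniformity in (2): the bound must hold on an unbounded neighborhood and be independent of $b$. This rests entirely on the uniform strip estimate for $\log S_b$ from \cite{LMSWY}, so care is needed in checking that the real parts $\mathrm{Re}(\zeta_\Delta-\mathbf iy_{\Delta,i})$ stay uniformly inside $(0,\frac{\pi}{6})$ for complex perturbations of $\boldsymbol x$. Since there are finitely many tetrahedra, choosing $\delta<\frac12\min_\Delta\mathrm{Re}\zeta^*_\Delta$ suffices.
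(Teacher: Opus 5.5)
Your route is the one the paper takes, since it simply defers to \cite[Propositions 5.7 and 5.8]{LMSWY}: split $\kappa$ and $\nu_b$ over tetrahedra and reduce to one-variable estimates in $w_{\Delta,i}=\zeta_\Delta-\mathbf i y_{\Delta,i}$. Part (1) is fine as written. The only slip there is harmless: the $w_i$-derivative of $\mathrm{Re}\log\big(1-e^{2\mathbf i\mathrm{Re}\zeta^*_\Delta}e^{-2w_i}\big)$ tends to $-2$, not $2$, as $w_i\to-\infty$.

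In (2), however, the step you leave open is exactly the one that justifies passing to complex $\boldsymbol x$, and the check you propose would fail. The polynomial terms of (\ref{Vxb}) do \emph{not} agree with those of (\ref{Vx}) plus $\kappa_{\boldsymbol x}b^2$, and what is left over is not a function of $\zeta$ alone. Using $3\zeta-\mathbf i\sum_i y_i=\sum_i w_i$, a direct computation gives
\[
\nu_{\boldsymbol x,b}(\zeta)b^4=\sum_{i=1}^3\Big[2\pi\mathbf i b^2\log S_b\Big(\frac{w_i}{\pi b}\Big)-L(w_i)+\pi\mathbf i b^2\log\big(1-e^{2\mathbf i w_i}\big)\Big]+b^2\sum_{i=1}^3\Big(\pi w_i-\frac{\pi^2}{2}\Big)-\frac{\pi^2}{6}b^4 .
\]
The function $\kappa_{\boldsymbol x}$ is not a sum of functions of the single $w_i$, because its $-2\pi\zeta$ term is not. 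So your bracket can only subtract the $\log(1-e^{2\mathbf i w})$ part. The leftover $b^{-2}\sum_i(\pi w_i-\frac{\pi^2}{2})-\frac{\pi^2}{6}$ is then of order $b^{-2}$ and unbounded in $\mathrm{Im}\zeta$, and so are your brackets. They only compensate each other, so neither can be bounded on its own by a strip estimate.

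The fix is to fold that term into the bracket. Define
\[
R_b(w)=\frac{2\pi\mathbf i b^2\log S_b\big(\frac{w}{\pi b}\big)-L(w)+b^2\big(\pi w-\frac{\pi^2}{2}+\pi\mathbf i\log(1-e^{2\mathbf i w})\big)}{b^4}.
\]
Then you get the exact identity $\nu_{\boldsymbol x,b}(\zeta)=-\frac{\pi^2}{6}+\sum_{i=1}^3R_b(\zeta-\mathbf i y_i)$, which holds verbatim for complex $\boldsymbol x$. Setting $\boldsymbol x=0$ in Proposition \ref{bound} (2) gives $|R_b(w)|\leqslant\frac13\big(N_{\delta'}+\frac{\pi^2}{6}\big)$ on all of $D_{\delta'}$, uniformly in $b$ and $\mathrm{Im}\,w$. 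Your bookkeeping of $\mathrm{Re}\,w_{\Delta,i}$ on $\mathcal D$ then finishes the proof. What handles complex $\boldsymbol x$, where the three $w_{\Delta,i}$ have different real parts, is this exact structure (a constant plus functions of the individual $w_{\Delta,i}$), not the cancellation of a $\zeta$-polynomial.
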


The proof of Proposition \ref{bound2} follows verbatim that of \cite[Proposition 5.7 and Proposition 5.8]{LMSWY}.

\subsection{Proof of Theorem \ref{vol}}

\begin{proof}[Proof of Theorem \ref{vol}] 
In (\ref{TVint}), we let $\Gamma = \Gamma^*,$ and let $\mathcal D$ be a $\delta$-neighborhood of $\Gamma^*$ in $\mathbb C^E\times \mathbb C^T$ for some $\delta>0.$ 

Let  $d>0$ be sufficiently small so that the $d$-neighborhood $B_{d}$ of $\boldsymbol z^*$ 
 lies entirely in $\mathcal D.$

By Proposition \ref{cp} and Proposition \ref{cd},  there is an $\epsilon>0$ sufficiently small so that 
 \begin{equation}\label{ImV}
\mathrm{Im} \mathcal V(\boldsymbol z^* + l\mathbf u )< -2\mathrm{Vol}(M)-4\epsilon
\end{equation}
and for $l>d$ and for any unit tangent vector $\mathbf u$ of $\Gamma^*;$ and there is an $L>d$ such that 
\begin{equation}\label{Dv11}
 D_{\mathbf u} \mathrm{Im} \mathcal V(\boldsymbol z^*+ l\mathbf u)<-2\epsilon
\end{equation}
for $l>L$ and for any unit tangent vector $\mathbf u$ of $\Gamma^*.$

Let
$$\Gamma^*_d=\Gamma^*\cap B_d=\Big\{ (\boldsymbol x,\boldsymbol \zeta) \in \Gamma^*\ \Big|\ \big| (\boldsymbol x,\boldsymbol \zeta)-\boldsymbol z^* \big|<d\Big\}$$
and let 
$$\Gamma^*_L=\Big\{ (\boldsymbol x,\boldsymbol \zeta) \in \Gamma^*\ \Big|\ \big| (\boldsymbol x,\boldsymbol \zeta)-\boldsymbol z^* \big|\leqslant L\Big\}.$$
We will show that, as $b\to 0,$
\begin{enumerate}[(I)]
\item  
\begin{equation*}
\begin{split}
    \int_{\Gamma^*_d} e^{\sum_{e\in E}x_e}\cdot\exp\bigg(\frac{\mathcal V_b(\boldsymbol x, \boldsymbol \zeta)}{2\pi \mathbf i b^2}\bigg)&dm_X(\boldsymbol x)  d \boldsymbol \zeta\\
= & b^{|E|-|V|+|T|}\cdot C(\boldsymbol z^*)\cdot  e^{-\frac{\mathrm{Vol}(M)}{\pi b^2}}\big(1+O(b^2)\big)
\end{split}
\end{equation*}
for some non-zero $C(\boldsymbol z^*)$ which will be specified later.

\item 
$$
\bigg|\int_{\Gamma^*_L\setminus \Gamma^*_d} e^{\sum_{e\in E}x_e}\cdot\exp\bigg(\frac{\mathcal V_b(\boldsymbol x, \boldsymbol \zeta)}{2\pi \mathbf i b^2}\bigg)dm_X(\boldsymbol x)  d \boldsymbol \zeta\bigg|\leqslant  O\bigg( e^{-\frac{\mathrm{Vol}(M)+\epsilon_1}{\pi b^2}}\bigg)
$$
for some $\epsilon_1>0.$

\item

$$
\bigg|\int_{\Gamma^*\setminus \Gamma^*_L} e^{\sum_{e\in E}x_e}\cdot\exp\bigg(\frac{\mathcal V_b(\boldsymbol x, \boldsymbol \zeta)}{2\pi \mathbf i b^2}\bigg)dm_X(\boldsymbol x)  d \boldsymbol \zeta\bigg|\leqslant  O\bigg( e^{-\frac{\mathrm{Vol}(M)+\epsilon_2}{\pi b^2}}\bigg)
$$
for some $\epsilon_2>0.$ 
\end{enumerate}
\medskip

For (I), we claim that  all the conditions of Proposition \ref{saddle} are satisfied by letting $\hbar=b^2,$ $D=B_d,$ $f=\frac{\mathcal V}{2\pi \mathbf i},$ $g=\exp\big(\sum_{e\in E}x_e+\frac{\kappa}{2\pi \mathbf i }\big),$ $f_\hbar=\frac{\mathcal V+\nu_b b^4}{2\pi \mathbf i},$ $\upsilon_\hbar=\frac{\nu_{b}}{2\pi \mathbf i},$ $S=\Gamma^*_d$ and $c=\boldsymbol z^*.$ 

Indeed, by Proposition \ref{cp}, $\boldsymbol z^*$ is a critical point of $f=\frac{\mathcal V}{2\pi \mathbf i}$ in $B_{d},$ hence condition (i) is satisfied. 
By Propositions \ref{cp} and \ref{cd}, $\boldsymbol z^*$ is the unique maximum point of $\mathrm{Re}f=\frac{\mathrm{Im}\mathcal V}{2\pi}$ on $\Gamma^*_d,$ hence condition (ii) is satisfied. 
By Proposition \ref{nond}, condition (iii) is satisfied. 
Since $g$ is an exponential function, it is non-zero, and condition (iv) is satisfied.
For condition (v), by Proposition \ref{bound2} (2),  $|\upsilon_{\hbar}(\zeta)|=\big|\frac{\nu_{b}(\zeta)}{2\pi \mathbf i}\big|<\frac{N}{2\pi}$ on $B_{d}.$ 
For condition (vi), since $\Gamma^*$ is a straight line, it is a smooth embedding near $\boldsymbol z^*.$  

Since all the conditions are satisfied, by Proposition \ref{saddle}, Proposition \ref{cp} and Proposition \ref{nond}, we have as $b\to 0,$
\begin{equation*}
\begin{split}
    \int_{\Gamma^*_d} e^{\sum_{e\in E}x_e}\cdot\exp\bigg(\frac{\mathcal V_b(\boldsymbol x, \boldsymbol \zeta)}{2\pi \mathbf i b^2}\bigg)&dm_X(\boldsymbol x)  d \boldsymbol \zeta\\
    =& b^{|E|-|V|+|T|}\cdot C(\boldsymbol z^*)\cdot  e^{-\frac{\mathrm{Vol}(M)}{\pi b^2}}\big(1+O(b^2)\big),
\end{split}
\end{equation*}
where
$$C(\boldsymbol z^*) = \frac{(2\pi)^{\frac{|E|-|V|+|T|}{2}}e^{\sum_{e\in E}x^*_e}e^{\frac{\kappa(\boldsymbol z^*)}{2\pi \mathbf i} }}{\sqrt{\det\Big(-\mathrm{Hess}\frac{\mathcal V}{2\pi \mathbf i}(\boldsymbol z^*)\Big)}}\neq 0,$$
and $\mathrm{Hess}\frac{\mathcal V}{2\pi \mathbf i}$ is  the Hessian matrix of the restriction of $\frac{\mathcal V}{2 \pi\mathbf i}$ on $\Gamma^*.$
This completes the proof of (I). 
\medskip

For (II) and (III), we have
\begin{equation*}
\begin{split}
& \bigg|\int_{\Gamma^*_L\setminus \Gamma^*_d} e^{\sum_{e\in E}x_e}\cdot\exp\bigg(\frac{\mathcal V_b(\boldsymbol x, \boldsymbol \zeta)}{2\pi \mathbf i b^2}\bigg)dm_X(\boldsymbol x)  d \boldsymbol \zeta\bigg|\\
\leqslant & \int_{\Gamma^*\setminus \Gamma^*_d}\exp\bigg(\frac{\mathrm{Im}\mathcal V(\boldsymbol  x,\boldsymbol \zeta)+\big(\mathrm{Im}\kappa(\boldsymbol x,\boldsymbol \zeta)+ 2\pi \sum_{e\in E}x_e\big)b^2+\mathrm{Im}\nu_b(\boldsymbol  x,\boldsymbol \zeta)b^4}{2\pi b^2} \bigg)|dm_X(\boldsymbol x)  d \boldsymbol \zeta|.
\end{split}
\end{equation*}
\smallskip

For  (II), by Proposition \ref{bound2} (2), there is a  $b_1>0$ such that 
\begin{equation}\label{last22}
\mathrm{Im}\nu_b(\boldsymbol x,\boldsymbol \zeta)b^4<Nb^4<\epsilon 
\end{equation}
for all $b<b_1$ and for all $(\boldsymbol x,\boldsymbol \zeta)\in\Gamma^*;$  and together with  (\ref{ImV}), we have
\begin{equation}\label{last33}
\mathrm{Im}\mathcal V(\boldsymbol  x, \boldsymbol \zeta)+\mathrm{Im}  \nu_{b}(\boldsymbol  x,\boldsymbol \zeta)b^4< -2\mathrm{Vol}(M) -3\epsilon
\end{equation}
for all $b<b_1$ and  $(\boldsymbol x,\boldsymbol \zeta)\in\Gamma^*\setminus \Gamma^*_d.$ By the compactness of ${\Gamma^*_L}\setminus \Gamma^*_d,$ there exists a $C>0$ such that 
\begin{equation}\label{Imkk}
\mathrm{Im}\kappa(\boldsymbol x,\boldsymbol \zeta)+2\pi \sum_{e\in E}x_e<C
\end{equation}
for all $(\boldsymbol x,\boldsymbol \zeta)\in\Gamma^*_L\setminus \Gamma^*_d.$ As a consequence of (\ref{last33}) and (\ref{Imkk}), we have
\begin{equation*}
\begin{split}
& \int_{\Gamma^*_L\setminus \Gamma^*_d}\exp\bigg(\frac{\mathrm{Im}\mathcal V(\boldsymbol  x,\boldsymbol \zeta)+\big(\mathrm{Im}\kappa(\boldsymbol x,\boldsymbol \zeta)+ 2\pi \sum_{e\in E}x_e\big)b^2+\mathrm{Im}\nu_b(\boldsymbol  x,\boldsymbol \zeta)b^4}{2\pi b^2} \bigg)|dm_X(\boldsymbol x)  d \boldsymbol \zeta|\\
<  & \Big(m(\Gamma^*_L)-m(\Gamma^*_d)\Big) e^{\frac{C}{2\pi}}\exp\bigg(\frac{-\mathrm{Vol}(M)-\epsilon }{\pi b^2}   \bigg )< O\Big(e^{\frac{-\mathrm{Vol}(M)-\epsilon_1}{\pi b^2}}\Big)
\end{split}
\end{equation*}
for any $\epsilon_1<\epsilon,$ where $m(\Gamma^*_L)$ and $m(\Gamma^*_d)$ are respectively the volume of $\Gamma^*_L$ and $\Gamma^*_d$ in the measure $dm_X(\boldsymbol x)  d \boldsymbol \zeta$ on $\Gamma^*.$ This completes the proof of (II). 
\medskip

For (III), there  is a $b_0\in (0, b_1)$ such that for all $b<b_0$ and for all unit tangent vector $\mathbf u=\big((u_e)_{e\in E},(u_\Delta)_{\Delta\in T}\big)$ of $\Gamma^*,$
\begin{equation}\label{ImKK}
\Big(\mathrm{Im}\kappa(\boldsymbol z^* + L\mathbf u) + 2\pi \sum_{e\in E} (x_e^*+ L u_e ) \Big)b^2<\epsilon,
\end{equation}
$(K+2\pi|E|)b^2<\epsilon$ and $Nb^4<\epsilon,$ where $K$ and $N$ are respectively the constants in Proposition \ref{bound2} (1) and (2). We claim that, for $(\boldsymbol x,\boldsymbol \zeta)\in\Gamma^*\setminus \Gamma^*_L$ and $b<b_0,$ 
\begin{equation}\label{cll}
\begin{split}
&\mathrm{Im}\mathcal V(\boldsymbol  x,\boldsymbol \zeta)+\Big(\mathrm{Im}\kappa(\boldsymbol x,\boldsymbol \zeta)+ 2\pi \sum_{e\in E}x_e\Big)b^2+\mathrm{Im}\nu_b(\boldsymbol  x,\boldsymbol \zeta)b^4\\
<&-2\big(\mathrm{Vol}(M)+\epsilon\big)-\epsilon\big(|(\boldsymbol x,\boldsymbol \zeta)-\boldsymbol z^*|-L\big),
\end{split}
\end{equation}
as a consequence of which, we have,
\begin{equation}\label{CII}
\begin{split}
& \int_{\Gamma^*\setminus \Gamma^*_L}\exp\bigg(\frac{\mathrm{Im}\mathcal V(\boldsymbol  x,\boldsymbol \zeta)+\big(\mathrm{Im}\kappa(\boldsymbol x,\boldsymbol \zeta)+ 2\pi \sum_{e\in E}x_e\big)b^2+\mathrm{Im}\nu_b(\boldsymbol  x,\boldsymbol \zeta)b^4}{2\pi b^2} \bigg)|dm_X(\boldsymbol x)  d \boldsymbol \zeta| \\
 < &  \exp\bigg(\frac{-\mathrm{Vol}(M)-\epsilon}{\pi b^2}\bigg)\int_{\Gamma^*\setminus \Gamma^*_L}\exp\bigg(\frac{-\epsilon\big(|(\boldsymbol x,\boldsymbol \zeta)-\boldsymbol z^*|-L\big)}{2\pi}\bigg)|dm_X(\boldsymbol x)  d \boldsymbol \zeta| \\
< & O\Big(e^{\frac{-\mathrm{Vol}(M)-\epsilon_2}{\pi b^2}}\Big)
\end{split}
\end{equation}
for any $\epsilon_2<\epsilon.$ 

For the proof of the claim, by (\ref{Dv11}) and the choice of $b_0,$ for $l>L,$ we have 
$$D_\mathbf u\bigg(\mathrm{Im}\mathcal V(\boldsymbol z^*+l\mathbf u)+\Big(\mathrm{Im}\kappa(\boldsymbol z^* + l\mathbf u) + 2\pi \sum_{e\in E} (x_e^*+ l u_e ) \Big)b^2\bigg)<-2\epsilon + (K+2\pi |E|)b^2<-\epsilon.$$
Together with the Mean Value Theorem, (\ref{ImV}) and (\ref{ImKK}), we have 
\begin{equation}\label{Bouu}
\begin{split}
&\mathrm{Im}\mathcal V(\boldsymbol  x,\boldsymbol \zeta)+\Big(\mathrm{Im}\kappa(\boldsymbol x,\boldsymbol \zeta)+ 2\pi \sum_{e\in E}x_e\Big)b^2\\
< & \mathrm{Im}\mathcal V(\boldsymbol z^*+ L\mathbf u) + \Big(\mathrm{Im}\kappa(\boldsymbol z^* + L\mathbf u) + 2\pi \sum_{e\in E} (x_e^*+ L u_e ) \Big)b^2 - \epsilon \big |  (\boldsymbol x,
\boldsymbol \zeta) - (\boldsymbol z^*+ L\mathbf u ) \big|\\
< & -2\mathrm{Vol }(M)-3\epsilon  -\epsilon\big(|(\boldsymbol x,
\boldsymbol \zeta)-\boldsymbol z^*|-L \big)
\end{split}
\end{equation}
for all $(\boldsymbol x,\boldsymbol \zeta )\in \Gamma^*\setminus \Gamma^*_L,$ where $\mathbf u$ is the unit tangent vector along the direction of $(\boldsymbol x,\boldsymbol \zeta)-\boldsymbol \zeta^*.$  Finally, putting (\ref{Bouu}) and (\ref{last22}) together, we have  (\ref{cll}) and the first  inequality in (\ref{CII}); and since 
$$|(\boldsymbol x,\boldsymbol \zeta)-\boldsymbol z^*|-L\to+\infty$$
as $(\boldsymbol x,\boldsymbol \zeta) \in \Gamma^*\setminus \Gamma^*_L$ approaches $\infty,$ we have the second inequality in (\ref{CII}). This completes the proof of (III).
\\

Putting (I), (II), (III) and (\ref{TVint}) together, we have as $b\to 0,$ 
$$\mathrm{TV}_b(M,\mathcal T) = \frac{1}{b^{|V|}}\cdot \mathbb T (\boldsymbol z^*)\cdot e^{-\frac{\mathrm{Vol}(M)}{\pi b^2}}\big(1+O(b^2)\big),
$$
where
$$\mathbb T(\boldsymbol z^*) =   \frac{\sqrt{\det (A^TA)}}{2^{\frac{|V|}{2}}\pi^{\frac{|E|+|V|+|T|}{2}}}\frac{e^{\sum_{e\in E}x^*_e} e^{ \frac{\kappa(\boldsymbol z^*)}{2\pi \mathbf i} }}{\sqrt{\det\Big(-\mathrm{Hess}\frac{\mathcal V}{2\pi \mathbf i}(\boldsymbol z^*)\Big)}}\neq 0,$$
from which the result follows. 
\end{proof}

\end{document}